\newtheorem*{conj*}{Conjecture}
\newtheorem*{ack}{Acknowledgements}
\newtheorem*{thm*}{Theorem}
\newtheorem{prop}{Proposition}[section]
\newtheorem{LM}{Lemma}[section]
\newtheorem{thm}{Theorem}[section]
\newtheorem{df}{Definition}[section]
\newtheorem{cor}{Corollary}[section]
\newtheoremstyle{pourlesremarques}{\topsep}{\topsep}{\normalfont}{}{\bfseries}{.}{ }{}
\theoremstyle{pourlesremarques}
\newtheorem{rem}{Remark}[section]
\newtheorem*{rem*}{Remark}
\newtheoremstyle{pourlesexemples}{\topsep}{\topsep}{\normalfont}{}{\bfseries}{.}{ }{}
\theoremstyle{pourlesexemples}
\newcommand{\upequal}{\mathrel{\rotatebox[origin=c]{90}{$=$}}}
\newcommand{\downsimeq}{\mathrel{\rotatebox[origin=c]{-90}{$\simeq$}}}
\renewcommand{\o}{\mathfrak{O}}
\newcommand{\p}{\mathfrak{P}}
\newcommand{\w}{\varpi}
\renewcommand{\d}{\delta}
\renewcommand{\l}{\lambda}
\newcommand{\D}{\Delta}
\newcommand{\C}{\mathbb{C}}
\newcommand{\N}{\mathbb{N}}
\newcommand{\Z}{\mathbb{Z}}
\newcommand{\1}{\mathbf{1}}
\def\YEAR{\year}\newcount\VOL\VOL=\YEAR\advance\VOL by-1995
\def\firstpage{1}\def\lastpage{1000}
\def\received{}\def\revised{}
\def\communicated{}
\def\magnification{\afterassignment\m@g\count@}
\def\m@g{\mag=\count@\hsize6.5truein\vsize8.9truein\dimen\footins8truein}
\font\eightrm=cmr8
\font\caps=cmcsc10                    
\font\Caps=cmcsc10 scaled \magstep1   
\def\bfseries{\normalsize\caps}
\def\DocMath{}
\renewcommand{\@evenhead}{%
    \ifnum\thepage>\lastpage\rlap{\thepage}\hfill%
    \else\rlap{\thepage}\slshape\leftmark\hfill{\caps\SAuthor}\hfill\fi}%
\renewcommand{\@oddhead}{%
    \ifnum\thepage=\firstpage{\DocMath\hfill\llap{\thepage}}%
    \else{\slshape\rightmark}\hfill{\caps\STitle}\hfill\llap{\thepage}\fi}%
\def\TSkip{\bigskip}
\newbox\TheTitle{\obeylines\gdef\GetTitle #1
\ShortTitle  #2
\SubTitle    #3
\Author      #4
\ShortAuthor #5
\EndTitle
{\setbox\TheTitle=\vbox{\baselineskip=20pt\let\par=\cr\obeylines%
\halign{\centerline{\Caps##}\cr\noalign{\medskip}\cr#1\cr}}%
	\copy\TheTitle\TSkip\TSkip%
\def\next{#2}\ifx\next\empty\gdef\STitle{#1}\else\gdef\STitle{#2}\fi%
\def\next{#3}\ifx\next\empty%
    \else\setbox\TheTitle=\vbox{\baselineskip=20pt\let\par=\cr\obeylines%
    \halign{\centerline{\caps##} #3\cr}}\copy\TheTitle\TSkip\TSkip\fi%
\centerline{\caps #4}\TSkip\TSkip%
\def\next{#5}\ifx\next\empty\gdef\SAuthor{#4}\else\gdef\SAuthor{#5}\fi%
\ifx\received\empty\relax
    \else\centerline{\eightrm Received: \received}\fi%
\ifx\revised\empty\TSkip%
    \else\centerline{\eightrm Revised: \revised}\TSkip\fi%
\ifx\communicated\empty\relax
    \else\centerline{\eightrm Communicated by \communicated}\fi\TSkip\TSkip%
\catcode'015=5}}\def\Title{\obeylines\GetTitle}
\def\Abstract{\begingroup\narrower
    \parskip=\medskipamount\parindent=0pt{\caps Abstract. }}
\def\EndAbstract{\par\endgroup\TSkip}
\long\def\MSC#1\EndMSC{\def\arg{#1}\ifx\arg\empty\relax\else
     {\par\narrower\noindent%
     2000 Mathematics Subject Classification: #1\par}\fi}
\long\def\KEY#1\EndKEY{\def\arg{#1}\ifx\arg\empty\relax\else
	{\par\narrower\noindent Keywords and Phrases: #1\par}\fi\TSkip}
\newbox\TheAdd\def\Addresses{\vfill\copy\TheAdd\vfill
    \ifodd\number\lastpage\vfill\eject\phantom{.}\vfill\eject\fi}
{\obeylines\gdef\GetAddress #1
\Address #2 
\Address #3
\Address #4
\EndAddress
{\def\xs{4.3truecm}\parindent=0pt
\setbox0=\vtop{{\obeylines\hsize=\xs#1\par}}\def\next{#2}
\ifx\next\empty 
     \setbox\TheAdd=\hbox to\hsize{\hfill\copy0\hfill}
\else\setbox1=\vtop{{\obeylines\hsize=\xs#2\par}}\def\next{#3}
\ifx\next\empty 
     \setbox\TheAdd=\hbox to\hsize{\hfill\copy0\hfill\copy1\hfill}
\else\setbox2=\vtop{{\obeylines\hsize=\xs#3\par}}\def\next{#4}
\ifx\next\empty\ 
     \setbox\TheAdd=\vtop{\hbox to\hsize{\hfill\copy0\hfill\copy1\hfill}
                \vskip20pt\hbox to\hsize{\hfill\copy2\hfill}}
\else\setbox3=\vtop{{\obeylines\hsize=\xs#4\par}}
     \setbox\TheAdd=\vtop{\hbox to\hsize{\hfill\copy0\hfill\copy1\hfill}
	        \vskip20pt\hbox to\hsize{\hfill\copy2\hfill\copy3\hfill}}
\fi\fi\fi\catcode'015=5}}\gdef\Address{\obeylines\GetAddress}
\begin{document}
\Title
Essential Whittaker functions for $GL(n)$
\ShortTitle 
\SubTitle   
\Author 
Nadir Matringe
\ShortAuthor 
\EndTitle
\Abstract 
We give a constructive proof of the existence of the essential Whittaker function of a generic representation of $GL(n,F)$, for 
$F$ a non-archimedean local field, using mirabolic restriction techniques. 
\EndAbstract
\MSC 
22E50
\EndMSC
\KEY 
\EndKEY
\Address 
Universit\'e de Poitiers
\Address
Laboratoire de Math\'ematiques et Applications
\Address
T\'el\'eport 2 - BP 30179, Boulevard Marie et Pierre Curie, 86962, Futuroscope Chasseneuil
\Address
matringe@math.univ-poitiers.fr
\EndAddress

\section*{Introduction}

Let $F$ be nonarchimedean local field, we denote by $\o$ its ring of integers, and by 
$\p=\w\o$ the maximal ideal of this ring, where $\w$ is a uniformiser of $F$. We denote by $q$ the cardinality of $\o/\p$ and by $|.|$ the absolute value on $F$ normalised such that $|\w|$ is equal to $q^{-1}$.\\
For $n\geq 1$, we denote the group $GL(n,F)$ by $G_n$, the group $GL(n,\o)$ by $G_n(\o)$, and we set $G_0=\{1\}$. We denote by $A_n$ 
the torus of diagonal matrices in $G_n$, and by $N_n$ the unipotent radical of the Borel subgroup of $G_n$ given by upper triangular matrices. 
For $m\geq 1$, we denote by $K_n(m)$ the subgroup of 
$G_n$, given by matrices $\begin{pmatrix} g & v \\ l & t\end{pmatrix}$, for $g$ in $G_{n-1}(\o)$, $v$ in $\o^{n-1}$, 
$l$ with every coefficient in $\p^m$, and $t$ in $1+\p^m$. We set $K_n(0)=G_n(\o)$.\\

If $\pi$ is a generic representation of $G_2$, the essential vector of $\pi$ was first considered in \cite{C}, for $G_n$ with $n\geq 2$, it 
was studied in \cite{JPS}. Here is one of its main properties: if one calls $d$ the conductor (the power of $q^{-s}$
 in the $\epsilon$ factor with respect to an unramified additive character of $F$) of the representation $\pi$,  
the complex vector space $\pi^{K_n(d)}$ of vectors in $\pi$ fixed under $K_n(d)$, is generated by the essential vector of $\pi$, and 
$\pi^{K_n(k)}$ becomes the null space for $k<d$.\\
However, to prove its existence, one has to study properties of the Rankin-Selberg integrals associated to the pairs 
$(\pi,\pi')$, where $\pi'$ varies through the set of unramified generic representations of $G_{n-1}$.\\ 

We set a few more notations before explaining this.\\
We denote by $\nu$ the positive character $|.|\circ det$ of 
$G_n$.
 We use the product notation for normalised parabolic induction (see Section \ref{parab}). 
 For any sequence of complex numbers $s_1,\dots, s_n$, the representation $|.|^{s_1}\times \dots \times |.|^{s_n}$ of $G_n$ is unramified, 
 and its subspace of $G_n(\o)$-invariant vectors is of dimension $1$ (see Section \ref{genunram}).\\
We choose a character $\theta$ of $(F,+)$ trivial on $\o$ but not on $\p^{-1}$, and use it to define 
a non degenerate character, still denoted $\theta$, of the standard unipotent subgroup $N_n$ of $G_n$, by 
$\theta(n)= \theta(\sum_{i=1}^{n-1} n_{i,i+1}$).\\  

For $n\geq 2$, let $\pi$ and $\pi'$ be representations of Whittaker type (see Section \ref{genandL}) of $G_n$ and $G_{n-1}$ respectively, and denote by 
$W(\pi,\theta)$ and $W(\pi',\theta^{-1})$ their respective Whittaker models (which are quotients of $\pi$ and $\pi'$) with respect to $\theta$ and $\theta^{-1}$.\\
If $W$ and $W'$ belong respectively to $W(\pi,\theta)$ and $W(\pi',\theta^{-1})$, we denote $I(W,W',s)$ the associated 
Rankin-Selberg integral (see Section \ref{genandL}).\\
For example, for a sequence of complex numbers $a_1,\dots, a_m$, 
the induced representation $|.|^{a_1}\times \dots \times |.|^{a_m}$ of $G_m$, is of Whittaker type. 
If moreover $Re(a_1)\geq \dots \geq Re(a_m)$, the representation $|.|^{a_1}\times \dots \times |.|^{a_m}$ is of Langlands' type, 
and its Whittaker model contains a unique normalised spherical Whittaker function $W(q^{-a_1},\dots,q^{-a_m})$. It is the unique Whittaker 
function on $G_m$, fixed by $G_m(\o)$, which equals $1$ on $G_m(\o)$, and 
associated to the Satake parameter $\{q^{-a_1},\dots,q^{-a_m}\}$ (see \cite{S}). For fixed $g$ in $G_m$, the function  
$W(q^{-s_1},\dots,q^{-s_m})(g)$ is an element of the ring $\C[q^{\pm s_1},\dots, q^{\pm s_m}]^{\mathcal{S}_m}$ of invariant Laurent polynomials.
To define the essential vector of $\pi$, one needs to show as in \cite{JPS}, the following theorem 
(see \cite{GJ} for the definition of the $L$ function of an irreducible representation of $G_n$):

\begin{thm*}
Let $\pi$ be a generic representation of $G_n$ with Whittaker model $W(\pi,\theta)$, then there exists 
in $W(\pi,\theta)$ a unique $G_{n-1}(\o)$-invariant function $W_{\pi}^{ess}$, such that for every sequence of 
complex numbers $s_1,\dots,s_{n-1}$, one has the equality
$I(W_{\pi}^{ess},W(q^{-s_1},\dots,q^{-s_{n-1}}),s)= \prod_{i=1}^{n-1}L(\pi,s+s_i)$.
\end{thm*}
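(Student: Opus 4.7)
The plan is to pin down the candidate $W_\pi^{ess}$ by its restriction to the diagonal torus (using Iwasawa), and then to realise it inside the Whittaker model using the structure of $\pi$ restricted to the mirabolic subgroup $P_n$.

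\textbf{Step 1: reduction to the torus.} By the Iwasawa decomposition $G_{n-1}=N_{n-1}A_{n-1}G_{n-1}(\o)$, any right $G_{n-1}(\o)$-invariant $W\in W(\pi,\theta)$ is determined by its values on $\{\mathrm{diag}(\w^\lambda,1):\lambda\in\Z^{n-1}\}$, and the Whittaker law together with $\theta|_{\o}=1$ forces these values to vanish off the antidominant cone. Substituting Iwasawa into $I(W,W',s)$ turns it into an explicit sum over antidominant $\lambda$ of $W(\mathrm{diag}(\w^\lambda,1))\,W'(\w^\lambda)$ weighted by modular characters.

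\textbf{Step 2: determining the torus values.} For the spherical $W'=W(q^{-a_1},\dots,q^{-a_{n-1}})$, the Casselman-Shalika formula gives $W'(\w^\lambda)=\delta_{B_{n-1}}^{1/2}(\w^\lambda)\,s_\lambda(q^{-a})$ on antidominant $\lambda$, where $s_\lambda$ denotes the Schur polynomial. On the other side, writing $1/L(\pi,s)$ as a polynomial in $q^{-s}$ and factoring formally, a Cauchy-type identity produces an expansion
\[\prod_{i=1}^{n-1}L(\pi,s+a_i)\,=\,\sum_\lambda c_\lambda(q^{-s})\,s_\lambda(q^{-a}),\]
with coefficients $c_\lambda$ determined by $L(\pi,s)$. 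Matching this against Step 1 and using the linear independence of the $s_\lambda$'s as $\lambda$ varies uniquely determines $W_\pi^{ess}(\mathrm{diag}(\w^\lambda,1))$, settling uniqueness; the prescription amounts to a formal Casselman-Shalika-type formula tied to the $L$-factor of $\pi$.

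\textbf{Step 3: realisation via mirabolic restriction.} The remaining task is to produce a vector in $W(\pi,\theta)^{G_{n-1}(\o)}$ whose torus values are exactly those predicted in Step 2. Here I would use the Bernstein-Zelevinsky filtration of $\pi|_{P_n}$ whose successive quotients are built from the derivatives $\pi^{(k)}$; this identifies which functions on the diagonal copy of $G_{n-1}$ in $P_n$ arise as restrictions of elements of $W(\pi,\theta)$. Induction on $n$, using the essential Whittaker functions for the lower-rank derivatives $\pi^{(k)}$, then allows one to glue these pieces together into a genuine element of the Whittaker model of $\pi$ realising the prescribed values.

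The main obstacle is Step 3: the prescription from Step 2 is an algebraic recipe in terms of $L(\pi,s)$, whereas the Whittaker model is a geometric object attached to $\pi|_{P_n}$. Bridging the two is the purpose of mirabolic restriction and the induction on $n$; reconciling the multiplicative structure of $\prod_i L(\pi,s+a_i)$ with the step-by-step construction is where the real technical work will lie.
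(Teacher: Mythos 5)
Your high-level outline shares the paper's strategy (explicit torus formula plus realisation via mirabolic restriction and the Bernstein--Zelevinsky filtration of $\pi|_{P_n}$), but there are two substantive gaps.

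First, in Step 2 the abstract Cauchy expansion $\prod_i L(\pi,s+a_i)=\sum_\lambda c_\lambda(q^{-s})s_\lambda(q^{-a})$ does correctly pin down the candidate torus values by linear independence of Schur polynomials, but it leaves the existence problem completely opaque: the coefficients $c_\lambda$ are just numbers and there is no indication why the resulting family of torus values should be realisable in $W(\pi,\theta)^{G_{n-1}(\o)}$. The paper bypasses this by identifying the torus values directly as those of $W_{\pi_u}^0$, the normalised spherical Whittaker function of the unramified representation $\pi_u$ of Langlands type on $G_r$ obtained by collecting the unramified one-dimensional segments of $\pi$ (Definition \ref{piu}); the match with $\prod_i L(\pi,s+s_i)$ is then a consequence of $L(\pi,\pi',s)=L(\pi_u,\pi',s)$ and the classical Jacquet--Shalika unramified computation for the pair $(\pi_u,\pi')$, rather than a fresh combinatorial expansion.

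Second, and more seriously, the plan in Step 3 to "use the essential Whittaker functions for the lower-rank derivatives $\pi^{(k)}$ and induct on $n$" does not go through: the derivatives $\pi^{(k)}$ of a generic $\pi$ are in general reducible and not of Whittaker type, so there is no essential vector to invoke. What the paper does instead is show that $(\pi^{(n-r)})^{G_r(\o)}$ is exactly one-dimensional and that $\pi_u$ occurs as the submodule of $\pi^{(n-r)}$ generated by a spherical vector (Proposition \ref{sousmoduleder}, Corollary \ref{sousmodulegen}); it then lifts $W_{\pi_u}^0$ back up through the $n-r$ mirabolic steps using specific lemmas (Propositions \ref{phi-spher} and \ref{spher1}, Lemmas \ref{goodrestriction} and \ref{samegerm}) that guarantee the lift can be chosen $P_{k}(\o)$-invariant at each stage and with the right support (the $\1_{\o}(z_r)\prod_{j>r}\1_{\o^*}(z_j)$ factors in formula (\ref{formule2})). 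Maintaining $K$-invariance and support control through the $\Phi^-/\Psi^-$ functors is precisely the technical content that your sketch leaves unaddressed, and it is not an induction on $n$ but a single descent to level $r$ followed by an explicit ascent.
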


 Hence, the statement of the 
theorem is equivalent to say that for 
any unramified representation $\pi'$ of Langlands' type of $G_{n-1}$ with 
normalised spherical Whittaker function $W_{\pi'}^0$ in $W(\pi',\theta)$, one has the equality $I(W_{\pi}^{ess},W_{\pi'}^0,s)= L(\pi,\pi',s)$ (see Section \ref{genandL} for the definition of 
$L(\pi,\pi',s)$ and the equality $L(\pi,\pi',s)=\prod_{i=1}^{n-1}L(\pi,s+s_i)$ when 
$\pi'=|.|^{s_1}\times \dots \times |.|^{s_m}$).\\
Using this theorem, it is then shown in \cite{JPS}, using the functional equation of $L(\pi,\pi',s)$,
 that the space $W(\pi,\theta)^{K_n(d)}$ is a complex line spanned by $W_{\pi}^{ess}$, and that $W(\pi,\theta)^{K_n(k)}$ is zero for $k<d$.\\ 

In this paper, we will show the following result, using the interpretation in terms of restriction of Whittaker 
functions of the Bernstein-Zelevinsky derivatives.\\ 
Let $\pi$ be a ramified generic representation of $G_n$, and $\pi_u$ be the unramified component of the first nonzero spherical Bernstein-Zelevinsky 
derivative $\pi^{(n-r)}$ of $\pi$ (see Definition \ref{piu} for the precise definition). The representation $\pi_u$ is an unramified representation 
of Langlands' type of $G_r$ when $r\geq 1$. In this situation, we show in Corollary \ref{corformule}, that there is a unique Whittaker function $W_{\pi}^{ess}$ in $W(\pi,\theta)$, which is right 
$G_{n-1}(\o)$-invariant, and which satisfies, for $a=diag(a_1,\dots,a_{n-1})\in A_{n-1}$ and $a'=diag(a_1,\dots,a_{r})\in A_r$, the equality:
\begin{equation} \label{formule}W(diag(a,1))=W_{\pi_u}^0(a')\nu(a')^{(n-r)/2}\1_{\o}(a_r)\prod_{r<i<n}\1_{\o^*}(a_i),\end{equation}
 when $r\geq 1$, and
  \begin{equation} \label{formule'}W(diag(a,1))=\prod_{0<i<n}\1_{\o^*}(a_i)\end{equation} when $r=0$.\\
Computing the integral $I(W_{\pi}^{ess},W_{\pi'}^0,s)$ for an unramified representation $\pi'$ of Langlands' type of $G_{n-1}$, we 
will obtain in Corollary \ref{testf} the statement (more precisely a slightly more general statement) of the theorem stated above.\\

For $GL(2,F)$, a detailed account about newforms can be found in \cite{Sc}, the author obtains Formula (\ref{formule}) (see Section 2.4 of [loc. cit.]) 
up to normalisation by an $\epsilon$-factor. For $GL(n,F)$, Miyauchi (\cite{Mi}) recently obtained Formula (\ref{formule}), assuming the existence 
of the essential vector, by using Hecke algebras, i.e. generalising 
Shintani's method for spherical representations. 

\begin{rem*} The reason why we got interested in reproving the existence of such a vector is the following. 
In \cite{JPS}, the uniqueness of such a vector is proved. 
The proof of the existence is valid only for generic representations $\pi$ appearing as subquotients of representations parabolically 
induced by ramified characters of $GL(1,F)$ and cuspidal representations of $GL(r,F)$ for $r\geq 2$, i.e. generic representations whith 
$L$-function equal to one.\\
Before we explain this, let us mention that Jacquet (see \cite{J}) found a simple fix for the proof of \cite{JPS}, so that 
the motivation of writing our note is really to give a constructive proof of the existence of this vector, which provides a nice 
application of the techniques developed in \cite{CP}.\\

In \cite{JPS}, the following is shown: for fixed $W$ in $W(\pi,\theta)$, the function 
$$P(W,q^{-s_1},\dots,q^{-s_{n-1}})=I(W,W(q^{-s_1},\dots,q^{-s_{n-1}}),0)/\prod_{i=1}^{n-1}L(\pi,s_i)$$ belongs to the ring 
$\C[q^{\pm s_1},\dots,q^{\pm s_{n-1}}]^{\mathcal{S}_{n-1}}$ of symmetric Laurent polynomials in the variables $q^{-s_i}$. 
It is also shown that the existence of the essential vector is equivalent to the fact that the vector space 
$$I(\pi)=\{P(W,q^{-s_1},\dots,q^{-s_{n-1}}), W\in W(\pi,\theta)\},$$ which is actually an ideal, is equal to the the full ring  
$$\C[q^{\pm s_1},\dots,q^{\pm s_{n-1}}]^{\mathcal{S}_{n-1}}.$$\\
The argument used to prove it goes like this:\\
For $W$ well chosen, $P(W,q^{\-s_1},\dots,q^{-s_{n-1}})$ is equal to 
$$\prod_{i=1}^{n-1} 1/L(\pi,s_i).$$   
We denote by $Q$ the element $1/L(\pi,s)$ of $\C[q^{-s}]$, so that 
$$P(W,q^{-s_1},\dots,q^{-s_{n-1}})= \prod_{i=1}^{n-1}Q(q^{-s_i}).$$
Because of the functional equation of the $L$-function $L(\pi,|.|^{s_1}\times \dots \times |.|^{s_{n-1}},s)$, denoting 
$\pi^{\vee}$ the smooth contragredient of $\pi$, one shows 
that $I(\pi)$ also contains the product $\prod_{i=1}^{n-1}Q'(q^{-1}q^{s_i})$, where 
$$Q'(q^{-s})= 1/L(\pi^{\vee},s).$$ 
Proposition 2.1. of the paper then shows that $Q'(q^{-1}q^{s})$ and 
$Q(q^{-s})$ are prime to one another in $\C[q^{\pm s}]$, and they deduce from this that no maximal ideal
$$I_{q^{-a_1},\dots,q^{-a_{n-1}}}= \{R \in \C[q^{\pm a_1},\dots,q^{\pm a_{n-1}}], R(q^{\pm a_1},\dots,q^{\pm a_{n-1}})=0\}$$ for 
 $(a_1,\dots,a_{n-1})$ in ${\C}^{n-1}$, contains $\prod_{i=1}^{n-1}Q'(q^{-1}q^{s_i})$ and $\prod_{i=1}^{n-1}Q(q^{-s_i})$ 
together, which implies the result.\\
This last step is false as soon as $n\geq 3$, and there are $a$ and $b$ in $\C^*$ such that 
$Q(a)=Q'(q^{-1}b^{-1})=0$, because then both products belong to any ideal $I_{a,b,\dots,x_{n-1}}$. 
This is the case as soon as the degree $d^\circ(Q)$ of $Q$ satisfies $d^\circ(Q)\geq 1$.\\ 
However, using the functional equation of $L(\pi,|.|^{z_1}\times \dots \times |.|^{z_{n-1}})$, and the cyclicity of 
$W(q^{-z_1},\dots,q^{-z_{n-1}})$ in $W(|.|^{z_1}\times \dots \times |.|^{z_{n-1}},\theta)$ when $Re(z_i)\geq Re(z_{i+1})$, 
Jacquet noticed (see \cite{J}) that one can find for every $(a_1,\dots,a_{n-1})$ in ${\C}^{n-1}$, a polynomial in $I(\pi)$, taking the value 
$1$ when evaluated at $(q^{\pm a_1},\dots,q^{\pm a_{n-1}})$, so $I(\pi)$ is indeed equal to 
$\C[q^{\pm s_1},\dots,q^{\pm s_{n-1}}]^{\mathcal{S}_{n-1}}$.
\end{rem*}

\section{Preliminaries}\label{rappelgen}

In this section, we first recall basic facts about smooth representations of locally profinite groups. We 
then focus on $G_n$, recall results from \cite{BZ} about derivatives, then introduce the 
$L$-function of a pair of representations of Whittaker type, we discuss espacially the unramified case.

\subsection{Smooth representations, restriction and induction}

When $G$ is an $l$-group (locally compact totally disconnected group), we denote by 
$Alg(G)$ the category of smooth complex $G$-modules. We denote by $\widehat{G}$ the group of smooth characters (smooth representations of dimension $1$) of $G$.
If $(\pi,V)$ belongs to $Alg(G)$, $H$ is a closed subgroup of $G$,
 and $\chi$ is a character of $H$, we denote by $V(H,\chi)$ the subspace of $V$ generated by vectors of the form $\pi(h)v-\chi(h)v$ 
for $h$ in $H$ and $v$ in $V$. 
This space is stable under the action of the subgroup $N_G(\chi)$ of the normalizer $N_G(H)$ of $H$ in $G$, which fixes $\chi$.\\
We denote by $\delta_G$ the positive character of $G$ such that if $\mu$ is a right Haar measure on $G$, and $int$ is the action 
of $G$ on smooth functions $f$ with compact support in $G$, given by $(int(g)f)(x)=f(g^{-1}xg)$, then 
$\mu \circ int(g)= \delta_G(g)\mu $ for $g$ in $G$.\\ 
The space $V(H,\chi)$ is $N_G(\chi)$-stable. Thus, if $L$ is a closed-subgroup of $N_G(\chi)$, and $\delta'$ is a (smooth) character of 
$L$ (which will be a normalising character dual to that of normalised induction later), the quotient 
$V_{H,\chi}=V/V(H,\chi)$ (that we simply denote by $V_H$ when 
$\chi$ is trivial) becomes a smooth $L$-module for the (normalised) action $l.(v + V(H,\chi))= \delta'(l)\pi(l)v + V(H,\chi)$ of $L$ on 
$V_{H,\chi}$.\\  
We denote by $V^H$ the subspace of vectors of $V$ fixed by $H$; for $H$ compact and open, the functor 
$V\mapsto V^H$ from $Alg(G)$ to $Alg(G_0)$ is exact (\cite{BH}, 2.3., Corollary 1).\\
 We say that $(\pi,V)$ in $Alg(G)$ is \textit{admissible} if for any 
compact open subgroup $H$ of $G$, the vector space $V^H$ is finite dimensional.\\ 
If $H$ is a closed subgroup of an $l$-group $G$, and $(\rho,W)$ belongs to $Alg(H)$, we define the objects 
$(ind_H^G(\rho), V_c=ind_H^G(W))$ and $(Ind_H^G(\rho), V=Ind_H^G(W))$ of $Alg(G)$ as follows. The space $V$ is the space of smooth functions 
from $G$ to $W$, fixed under right translation by the elements of a compact open subgroup 
$U_f$ of $G$, and  satisfying $f(hg)=\rho(h)f(g)$ for all $h$ in $H$ and $g$ in $G$. The space $V_c$ 
is the subspace of $V$, consisting of functions with support compact mod $H$, in both cases, the action of $G$ is by right 
translation on the functions.\\
We recall that by Frobenius reciprocity law (\cite{BH}, 2.4.), the spaces $Hom_G(\pi,Ind_H^G(\rho))$ and $Hom_H(\pi_{|H},\rho)$ are isomorphic when 
$\pi$ (resp. $\rho$) belongs to $Alg(G)$ (resp. $Alg(H)$).\\
If the group $G$ is exhausted by compact subsets (which is the case of closed subgroups of $G_n$), 
and $(\pi,V)$ is irreducible, it is known (\cite{BH}, 2.6., Corollary 1)  that the center $Z$ of $G$ acts on $V$ by the so-called central
 character of $\pi$ which we will denote $c_\pi$. When $G=G_n$, then $Z$ identifies with $F^*$. By definition, the real part 
$Re(\chi)$ of a character $\chi$ of $F^*$ is the real number $r$ such that $|\chi(t)|_{\C}=|t|^r$, where $|z|_{\C}=\sqrt{z\bar{z}}$ for $z$ 
in $\C$. 

\subsection{Parabolic induction and segments for GL(n)}\label{parab}

Now we focus on the case $G=G_n$, we will only consider smooth representations of its closed subgroups. 
It is known that irreducible representations of $G_n$ are admissible (see \cite{C2}).\\
If $n\geq 1$, let $\bar{n}=(n_1,\dots,n_t)$ be a partition of $n$ of length $t$ (i.e. an ordrered
 set of $t$ positive integers 
whose sum is $n$), we denote by $M_{\bar{n}}$ to be the Levi subgroup of $G_n$, of matrices 
$diag(g_1,\dots,g_t)$, with each $g_i$ in $G_{n_i}$, by $N_{\bar{n}}$ the unipotent subgroup 
of matrices $\begin{pmatrix} I_{n_1} & \star & \star \\ & \ddots & \star \\ & & I_{n_t} \end{pmatrix}$, and by $P_{\bar{n}}$ the standard 
parabolic subgroup 
$M_{\bar{n}}N_{\bar{n}}$ (where $M_{\bar{n}}$ normalises $N_{\bar{n}}$). Note that $M_{(1,\dots,1)}$ is equal to $A_n$, and 
$N_{(1,\dots,1)}=N_n$. For each $i$, let $\pi_i$ be a smooth representation of $G_{n_i}$, then the tensor product 
$\pi_1 \otimes \dots \otimes \pi_t$ is a representation of $M_{\bar{n}}$, which can be considered as a representation of $P_{\bar{n}}$ trivial on 
$N_{\bar{n}}$. We will use the product notation  
$$\pi_1\times \dots \times \pi_t= Ind_{P_{\bar{n}}}^{G_n}(\d_{P_{\bar{n}}}^{1/2}\pi_1 \otimes \dots \otimes \pi_t )$$ for 
the normalised parabolic induction. Parabolic induction preserves finite length and admissibility (see \cite{BZ} or \cite{C2}).\\
We say that an irreducible 
representation $(\rho,V)$ of $G_n$ is cuspidal, if the Jacquet module $V_{N_{\bar{n}}}$ is zero whenever
 $\bar{n}$ is a proper partition of $n$ (i.e. we exclude $\bar{n}=(n)$).\\
 Suppose that $\bar{n}=(m,\dots,m)$ is a partition of $n$ of length $l$, and that $\rho$ is a cuspidal representation of $G_m$. Then 
Theorem  9.3. of \cite{Z} implies that the $G_n$-module $\nu^{-(l-1)}\rho\times \nu^{-(l-2)}\rho\times \dots \times\nu^{-1}\rho\times \rho$ has a unique irreducible quotient 
which we denote $[\nu^{-(l-1)}\rho,\nu^{-(l-2)}\rho, \dots, \nu^{-1}\rho,\rho].$ We will call such a representation a segment, 
it is known that segments are the quasi square integrable representations 
of $G_n$, but we won't need this result.\\
We end this paragraph with a word about induced representations of Langlands' type:

\begin{df}
Let $\D_1,\dots,\D_t$ be segments of respectively $G_{n_1},\dots,G_{n_t}$, and suppose that 
$Re(c_{\D_i})\geq Re(c_{\D_{i+1}})$. Let $n=n_1+\dots+n_t$, then 
the representation $\D_1\times \dots \times \D_t$ of $G_n$ is said to be induced of Langlands' type.  
\end{df}

These representations enjoy many remarkable properties, some of which we will recall later, here is a first one (which is the main result of 
\cite{Sil}).

\begin{prop}\label{Lgtype}
 Let $\pi$ be induced of Langlands' type, then $\pi$ has a unique irreducible quotient $Q(\pi)$. Moreover, considering that isomorphic representations are equal, the map 
 $\pi\mapsto Q(\pi)$ gives a bijection between the set of induced representations of Langlands' type of $G_n$, and the set of 
 irreducible representations of $G_n$.
\end{prop}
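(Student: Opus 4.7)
The plan is to follow Silberger's strategy via the standard intertwining operator. Write $\pi = \D_1 \times \dots \times \D_t$ with the Langlands condition $Re(c_{\D_i}) \geq Re(c_{\D_{i+1}})$.

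First I would construct a nonzero $G_n$-intertwiner $M \colon \pi \to \D_t \times \dots \times \D_1$ via the standard integral over the unipotent radical of the parabolic opposite to $P_{\bar n}$. The Langlands inequality, combined with the decay of matrix coefficients of each segment $\D_i$ controlled by $Re(c_{\D_i})$, ensures that this integral converges absolutely and defines a nonzero map. The target $\D_t \times \dots \times \D_1$ is parabolically induced from data satisfying the \emph{reverse} Langlands inequality; by the dual version of the statement (obtained by passing to contragredients), such an induced representation admits a unique irreducible subrepresentation, which must therefore coincide with $Im(M)$. Hence $Im(M)$ is irreducible, and the surjection $\pi \twoheadrightarrow Im(M)$ exhibits the irreducible quotient $Q(\pi)$.

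Next I would prove uniqueness of this irreducible quotient. Any irreducible quotient $\sigma$ of $\pi$ shares cuspidal support with $\pi$. Using the second adjunction between parabolic induction and the Jacquet functor, $\mathrm{Hom}_{G_n}(\pi, \sigma)$ identifies with a $\mathrm{Hom}$-space on $M_{\bar n}$ between the inducing data and the Jacquet module of $\sigma$ along the opposite parabolic; a Bernstein--Zelevinsky geometric lemma computation \cite{BZ} shows this $\mathrm{Hom}$-space is one-dimensional, which forces $\sigma \simeq Q(\pi)$.

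For the bijection, injectivity of $\pi \mapsto Q(\pi)$ follows by reconstructing the multiset $\{\D_i\}$ from $Q(\pi)$: both have the same cuspidal support, the Zelevinsky segment decomposition of this support is unique \cite{Z}, and the Langlands ordering is then unique up to permutations among segments sharing the same $Re(c_{\D_i})$, which produce isomorphic parabolic inductions. Surjectivity follows by applying Jacquet's subrepresentation theorem to an arbitrary irreducible $\sigma$ of $G_n$, regrouping its cuspidal support into a unique multiset of segments via \cite{Z}, and reordering them by decreasing real part of central character: the resulting induced representation is of Langlands' type with $Q$ equal to $\sigma$. The main obstacle is the absolute convergence and nonvanishing of the intertwining integral $M$ in this regime; once that and the geometric-lemma computation are settled, the rest of the proof is combinatorial bookkeeping on segments.
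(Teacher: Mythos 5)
The paper does not actually prove this proposition: it states it and attributes it to Silberger's \emph{The Langlands quotient theorem for p-adic groups} (\cite{Sil}), so there is no internal argument to compare against. Your sketch is a reasonable outline of the classical Langlands quotient proof, but as written it contains two genuine gaps.

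First, the convergence claim for the intertwining integral is incorrect under the hypothesis as stated. The condition is $Re(c_{\D_i}) \geq Re(c_{\D_{i+1}})$ with equality allowed, and whenever some of the exponents coincide (for instance when several $\D_i$ are unitary) the integral over the opposite unipotent radical does not converge absolutely; it only exists after meromorphic continuation. The standard repair is to first group together the segments whose exponents agree into a single irreducible essentially tempered representation on a Levi factor (two segments with the same exponent are never linked, so their product is irreducible and the factors commute), and only then apply the absolute-convergence argument, which is now in the strictly dominant regime. You flag this as ``the main obstacle'' but your plan needs this preliminary reduction, not just a convergence estimate.

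Second, the step identifying $Im(M)$ with the unique irreducible submodule of $\D_t \times \dots \times \D_1$ is circular as written: you appeal to ``the dual version of the statement,'' but passing to contragredients turns the reversed induction back into a Langlands-type representation, so this is literally the proposition you are trying to prove. Even granting the existence of a unique irreducible submodule $J$, you have only shown $J \subseteq Im(M)$, not $Im(M) = J$; ruling out $Im(M) \supsetneq J$ requires the same exponent/Jacquet-module analysis you invoke later. The cleaner and non-circular route is your second paragraph on its own: compute $\mathrm{Hom}_{G_n}(\pi,\sigma)$ via Frobenius reciprocity/second adjunction and the geometric lemma, show the relevant Jacquet-module exponent forces a unique candidate $\sigma$, and conclude. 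With that in place the intertwining operator is no longer needed to establish uniqueness (though it remains a useful way to identify $Q(\pi)$ concretely). Your bookkeeping for injectivity and surjectivity of $\pi \mapsto Q(\pi)$ via the unique segment decomposition of the cuspidal support is fine once the above is corrected.
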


\subsection{Berstein-Zelevinsky derivatives}\label{BZder}

For $n\geq2$ we denote by $U_{n}$ the group of matrices of the form $\begin{pmatrix} 
                                                                                         I_{n-1}    & v \\
                                                                                                    & 1 \end{pmatrix}$.\\

For $n>k\geq 1$, the group $G_{k}$ embeds naturally in $G_{n}$, and is given by matrices of the form 
$diag(g,I_{n-k})$. We denote by $P_n$ the mirabolic subgroup $G_{n-1}U_n$ of $G_n$ for $n\geq 2$, and $P_1=\{1_{G_1}\}$. If one sees $P_{n-1}$ as a subgroup of $G_{n-1}$ 
itself embedded in $G_n$, then $P_{n-1}$ is the normaliser of $\theta_{|U_n}$ in $G_{n-1}$ (i.e. if $g\in G_{n-1}$, then 
$\theta (g^{-1}ug)=\theta(u)$ for all $u\in U_n$ if and only if $g\in P_{n-1}$). We define the following functors:\\

\begin{itemize}

 \item The functor $\Phi^{-}$ from $Alg(P_k)$ to $Alg(P_{k-1})$ such that, if $(\pi,V)$ is a smooth $P_k$-module, 
$\Phi^{-} V =V_{U_k,\theta}$, and $P_{k-1}$ acts on $\Phi^{-}(V)$ by
 $\Phi^{-} \pi (p)(v+V(U_k,\theta))= \delta_{P_k} (p)^{-1/2}\pi (p)(v+V(U_k,\theta))$.

\item The functor $\Phi^{+}$ from $Alg(P_{k-1})$ to $Alg(P_{k})$ such that, for $\pi$ in $Alg(P_{k-1})$, one has
$\Phi^{+} \pi = ind_{P_{k-1}U_k}^{P_k}(\delta_{P_k}^{1/2}\pi \otimes \theta)$.

\item The functor $\hat{\Phi}^{+}$ from $Alg(P_{k-1})$ to $Alg(P_{k})$ such that, for $\pi$ in $Alg(P_{k-1})$, one has
$\hat{\Phi}^{+} \pi = Ind_{P_{k-1}U_k}^{P_k}(\delta_{P_k}^{1/2}\pi \otimes \theta)$.

\item The functor $\Psi^{-}$ from $Alg(P_k)$ to $Alg(G_{k-1})$, such that if $(\pi,V)$ is a smooth $P_k$-module, 
$\Psi^{-} V =V_{U_k,1}$, and $G_{k-1}$ acts on $\Psi^{-}(V)$ by
 $\Psi^{-} \pi (g)(v+V(U_k,1))= \delta_{P_k} (g)^{-1/2}\pi (g)(v+V(U_k,1))$.

\item The functor $\Psi^{+}$ from $Alg(G_{k-1})$ to $Alg(P_{k})$, such that for $\pi$ in $Alg(G_{k-1})$, one has
$\Psi^{+} \pi = ind_{G_{k-1}U_k}^{P_k}(\delta_{P_k}^{1/2}\pi \otimes 1)=\delta_{P_k}^{1/2}\pi \otimes 1 $.

\end{itemize}
 
These functors have the following properties which can be found in \cite{BZ}:

\begin{prop}\label{baseder}
a) The functors $\Phi^{-}$, $\Phi^{+}$, $\Psi^{-}$, and $\Psi^{+}$ are exact.\\
b) $\Psi^{-}$ is left adjoint to $\Psi^{+}$.\\
b') $\Phi^{-}$ is left adjoint to $\hat{\Phi}^{+}$.\\
c) $\Phi^{-}\Psi^{+}=0$ and $\Psi^{-}\Phi^{+}=0$.\\
d) $\Psi^{-}\Psi^{+}\simeq Id$ and $\Phi^{-}\Phi^{+}\simeq Id$.\\
e) One has the exact sequence $0\rightarrow \Phi^{+}\Phi^{-} \rightarrow Id \rightarrow \Psi^{+}\Psi^{-} \rightarrow 0$.\\
\end{prop}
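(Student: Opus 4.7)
The statement collects standard facts about the Bernstein--Zelevinsky functors from \cite{BZ}, and I would prove them in an order that works up from formal properties to the key exact sequence (e).

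I would begin with the exactness assertion (a). Both $\Psi^+$ and $\Phi^+$ are defined by compact induction from a closed subgroup, which is an exact functor on $Alg$ of an $l$-group. The functors $\Psi^-$ and $\Phi^-$ are twisted coinvariants along $U_k$; writing $V_{U_k,\chi}=\varinjlim_K V_{K,\chi}$ over the compact open subgroups $K$ of $U_k$, exactness reduces to exactness of coinvariants along compact $K$, which follows from the standard averaging projector $v\mapsto \int_K \chi(u)^{-1}\pi(u)v\, du$.

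Next I would handle the adjunctions and the formal identities. The crucial observation for (b) is that $P_k=G_{k-1}\ltimes U_k$, so $G_{k-1}U_k=P_k$ and $\Psi^+\pi$ is simply $\pi$ with $U_k$ acting trivially (up to the $\delta_{P_k}^{1/2}$ twist); the adjunction is then the universal property of coinvariants. For (b'), the full induction in $\hat{\Phi}^+$ is right adjoint to restriction by classical Frobenius reciprocity, and composing with the universal property of twisted coinvariants yields the bijection. Claims (c) and (d) are then direct: $\Phi^-\Psi^+=0$ because $U_k$ acts trivially on $\Psi^+\pi$ while $\theta$ is nontrivial, so $(1-\theta(u))\cdot v=0$ forces the $\theta$-coinvariants to vanish; $\Psi^-\Psi^+\simeq \mathrm{Id}$ is immediate from the same triviality; $\Phi^-\Phi^+\simeq \mathrm{Id}$ is a short Mackey computation on the compact induction, using that the induction is from the full stabiliser $P_{k-1}U_k$ of $\theta$; and $\Psi^-\Phi^+=0$ follows by a parallel Mackey computation, the point being that $\Phi^+\pi$ is supported entirely on the nontrivial-character stratum, so its trivial-character coinvariants vanish.

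The main substance, and the expected main obstacle, is the exact sequence (e). The natural transformation $\mathrm{Id}\to \Psi^+\Psi^-$ is the canonical projection onto $U_k$-coinvariants composed with the identification of $\Psi^+\Psi^-V$ with $V_{U_k,1}$; the transformation $\Phi^+\Phi^-\to \mathrm{Id}$ is constructed as in \cite{BZ}, realising $\Phi^+\Phi^-V$ inside $V$ as the ``$\theta$-generic piece''. That the composite vanishes and that the map on the right is surjective are formal (using (c)). The crux is exactness in the middle: one must identify the kernel $V(U_k,1)$ of $V\to V_{U_k,1}$ with the image of $\Phi^+\Phi^-V$. This is a Mackey/orbit argument based on the observation that the conjugation action of $P_{k-1}$ on the Pontryagin dual $\widehat{U_k}\simeq F^{k-1}$ has exactly two orbits --- the trivial character (accounting for the quotient $\Psi^+\Psi^-V$) and the single open orbit of nontrivial characters, on which $P_{k-1}$ acts transitively with stabiliser of $\theta$ equal to $P_{k-1}$ itself. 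Decomposing smooth $P_k$-modules along this orbit stratification, by the analogue of Mackey's theorem for smooth representations of a semidirect product by an abelian normal subgroup, identifies $V(U_k,1)$ with $\mathrm{ind}_{P_{k-1}U_k}^{P_k}(\delta_{P_k}^{1/2}\Phi^-V\otimes \theta)=\Phi^+\Phi^-V$. The delicate point is the smoothness and compact-support analysis along the non-closed orbit of nontrivial characters; this Mackey analysis is the technical heart of the proposition, all other items being essentially formal once it is in hand.
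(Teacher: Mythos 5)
The paper does not prove this proposition at all: it is quoted verbatim from Bernstein--Zelevinsky \cite{BZ} (their sections on the functors $\Phi^{\pm},\Psi^{\pm}$), so there is no in-paper argument to compare against. Your sketch is a faithful outline of the standard \cite{BZ} proof: exactness of compact induction and of twisted $U_k$-coinvariants via averaging projectors for (a), the universal properties and Frobenius reciprocity for (b), (b'), the triviality of the $U_k$-action on $\Psi^+\pi$ versus the nontriviality of $\theta$ for (c) and the first half of (d), and the two-orbit Mackey stratification of $\widehat{U_k}$ for $\Phi^-\Phi^+\simeq Id$ and for the exact sequence (e), whose middle exactness is indeed the only substantive point. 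One slip to correct: the group acting by conjugation on $\widehat{U_k}\simeq F^{k-1}$ in that stratification is $P_k/U_k\simeq G_{k-1}$, not $P_{k-1}$; as written, your claim that $P_{k-1}$ acts transitively on the nontrivial characters with stabiliser of $\theta$ equal to all of $P_{k-1}$ is self-contradictory (and false for $k\geq 3$). The correct statement, which is what the rest of your argument actually uses, is that $G_{k-1}$ has exactly two orbits on $\widehat{U_k}$, the open one consisting of the nontrivial characters, on which the stabiliser of $\theta$ is the mirabolic $P_{k-1}$ --- exactly the fact the paper records when it says $P_{n-1}$ is the normaliser of $\theta_{|U_n}$ in $G_{n-1}$. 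With that correction your outline is complete and is the intended proof.
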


Following \cite{CP}, if $\tau$ belongs to $Alg(P_n)$, we will denote $(\Phi^{-})^{k} \tau$ by $\tau_{(k)}$, 
and as usual, $\tau^{(k)}$ will be defined as $\Psi^{-}\tau_{(k-1)}$.\\

Because of e), $\tau$ has a natural filtration of $P_n$-modules 
$0\subset \tau_{n} \subset \dots \subset \tau_{1}=\tau$, where $\tau_k= {\Phi^{+}}^{k-1} {\Phi^{-}}^{k-1}\tau$. 
We will use the notation $\tau_{(k),i}$ for $(\tau_{(k)})_i$.  
The following observation is just a restatement of the definitions:

\begin{LM}\label{filtr}
If $\tau$ belongs to $Alg(P_n)$, then $\tau_k= \Phi^{+}(\tau_{(1),k-1})$ for $k\geq 1$.
\end{LM}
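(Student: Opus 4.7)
The lemma is a purely formal identity, and my plan is simply to unwind both sides in terms of iterated applications of the two functors $\Phi^{+}$ and $\Phi^{-}$, then to check that the bookkeeping of the indices matches. No deep input from Proposition \ref{baseder} beyond the definition of the filtration is needed.

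I would proceed as follows. By the definition of the filtration applied to $\tau \in Alg(P_n)$, one has $\tau_k = (\Phi^{+})^{k-1}(\Phi^{-})^{k-1}\tau$. The key observation is that exactly the same recipe defines the filtration on the smaller-rank object $\tau_{(1)} = \Phi^{-}\tau \in Alg(P_{n-1})$, so that
\[
\tau_{(1),j} = (\Phi^{+})^{j-1}(\Phi^{-})^{j-1}\tau_{(1)}.
\]
Setting $j = k-1$ and substituting $\tau_{(1)} = \Phi^{-}\tau$ on the right gives
\[
\tau_{(1),k-1} = (\Phi^{+})^{k-2}(\Phi^{-})^{k-1}\tau,
\]
and applying $\Phi^{+}$ to this identity yields $\Phi^{+}(\tau_{(1),k-1}) = (\Phi^{+})^{k-1}(\Phi^{-})^{k-1}\tau = \tau_k$, which is exactly what we want. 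The only nontrivial point is to check that the inclusion $\tau_k \subset \tau$ coming from the filtration is compatible with viewing $\Phi^{+}(\tau_{(1),k-1})$ as a $P_n$-submodule of $\tau_k = \Phi^{+}\Phi^{-}\tau \cdots$, but this is immediate from the functoriality of $\Phi^{+}$ applied to the inclusion $\tau_{(1),k-1}\hookrightarrow \tau_{(1)}$.

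I do not expect any real obstacle; the statement is a matter of matching notation. The one slightly awkward spot is the degenerate case $k=1$, where the right-hand side formally involves $\tau_{(1),0}$ which is outside the range of the filtration of $\tau_{(1)}$; there one simply reads the identity as the tautology $\tau_1 = \tau$ (with the outermost $\Phi^{+}$ and $\Phi^{-}$ both interpreted as the identity functor, consistent with $(\Phi^{+})^{0}(\Phi^{-})^{0}\tau = \tau$). In short, the proof is essentially a one-line computation, and the author's statement is best viewed as a convenient reformulation recording that each step of the filtration of $\tau$ is obtained by applying $\Phi^{+}$ to the corresponding step of the filtration of the shifted derivative $\tau_{(1)}$.
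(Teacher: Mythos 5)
Your proof is correct and matches the paper's intent: the paper gives no argument at all, remarking only that the lemma ``is just a restatement of the definitions,'' and your formal unwinding of $\tau_k = (\Phi^+)^{k-1}(\Phi^-)^{k-1}\tau$ together with the exactness/functoriality of $\Phi^+$ (to see that the inclusions match) is precisely the tautological verification the author has in mind. Your flag about the degenerate case $k=1$ is a fair observation, though it is immaterial since the lemma is only invoked later (in Proposition \ref{spher1}) for indices $i\ge 2$.
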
 

\subsection{Representations of Whittaker type and their L-functions}\label{genandL}

We recall that we fixed a character $\theta$ of conductor $\o$ in the introduction. 

\begin{df}
Let $\pi$ be an admissible representation of $G_n$, 
we say that $\pi$ is of Whittaker type $\pi$ if $Hom(\pi,Ind_{N_n}^{G_n}(\theta))$ is of dimension $1$, or equivalently, 
according to Frobenius reciprocity law, 
if the space $Hom_{N_n}(\pi,\theta)$ is of dimension $1$. We denote by $W(\pi,\theta)$ the image of $\pi$ in $Ind_{N_n}^{G_n}(\theta)$, it is 
called the Whittaker model of $\pi$ (with respect to $\theta$), it is a quotient of $\pi$.
\end{df}

Being of Whittaker type does not depend on the character $\theta$ of $(F,+)$, as another non trivial character $\theta'$ of $(F,+)$ 
will give birth to a character $\theta'$ of $N_n$, conjugate to $\theta$ by $A_n$.\\

In terms of derivatives, as the representation $Ind_{N_n}^{P_n}(\theta)$ is isomorphic to 
$(\hat{\Phi}^{+})^{n-1}\Psi^+(\1)$, where $\1$ is the trivial representation of $G_0$, applying 
$b)$ and $b')$ of Proposition \ref{baseder}, we obtain that $Hom_{N_n}(\pi,\theta)\simeq \C$ if and only if $\pi^{(n)}=\1$. Applying this 
to product of segments, and using the rules of ``derivation`` given in Lemma 3.5 of \cite{BZ} and Proposition 9.6. of \cite{Z}, we obtain 
that if $\D_1,\dots, \D_t$ are segments of $G_{n_1},\dots,G_{n_t}$ respectively, the representation 
$\pi=\D_1\times \dots \times \D_t$ of $G_n$ (for $n=n_1+\dots+n_t$) is of Whittaker type. If the segments $\D_i$ are ordered so that 
$\pi$ is of Langlands' type, we can say more according to the main result of \cite{JS3}.

\begin{prop}\label{langlandswhittaker}
For $n\geq 1$, let $\pi$ be a representation of $G_n$, which is induced of Langlands' type, then it has an injective 
Whittaker model, i.e. $\pi\simeq W(\pi,\theta)$ (equivalently $\pi$ embeds in $Ind_{N_n}^{G_n}(\theta)$).
\end{prop}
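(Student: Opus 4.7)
The plan is to invoke the main result of \cite{JS3}. Before doing so, let me sketch its main ingredients.

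From the preceding discussion, $\pi = \Delta_1 \times \cdots \times \Delta_t$ of Langlands' type is of Whittaker type: the space $\mathrm{Hom}_{N_n}(\pi,\theta)$ is one-dimensional, with generator $\lambda$. The associated $G_n$-equivariant map $\pi \to \mathrm{Ind}_{N_n}^{G_n}(\theta)$ sending $v$ to $W_v = (g \mapsto \lambda(\pi(g)v))$ has image $W(\pi,\theta)$, and the content of the proposition is the injectivity of this map.

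First I would construct $\lambda$ explicitly as a Jacquet integral. Each segment $\Delta_i$ is irreducible generic with a unique-up-to-scalar Whittaker functional $\lambda_i$. Realizing $v \in \pi$ as a function $G_n \to \Delta_1 \otimes \cdots \otimes \Delta_t$ with the standard equivariance and denoting by $\bar{N}_{\bar{n}}$ the unipotent radical opposite to $N_{\bar{n}}$, the formula
$$\lambda(v) \;=\; \int_{\bar{N}_{\bar{n}}} (\lambda_1 \otimes \cdots \otimes \lambda_t)\bigl(v(\bar{n})\bigr)\,\theta^{-1}(\bar{n})\, d\bar{n}$$
defines the desired Whittaker functional; the Langlands condition $Re(c_{\Delta_i}) \ge Re(c_{\Delta_{i+1}})$ is exactly the convergence condition for this integral, via standard bounds on Whittaker functions of segments and on the modulus character of $P_{\bar{n}}$.

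Then I would establish injectivity. Suppose $v \in \pi$ satisfies $W_v \equiv 0$, and study $W_v$ on the torus $A_n$. By Casselman's theorem on asymptotics, $W_v|_{A_n}$ admits an exponent expansion controlled by the semisimple Jacquet module $\pi_{N_n}^{\mathrm{ss}}$, whose leading characters of $A_n$ come from the central characters $c_{\Delta_i}$. The Langlands ordering ensures that these leading exponents are distinct and organized in a triangular pattern (the ``standard'' piece $\Delta_1 \otimes \cdots \otimes \Delta_t$ sitting on top), so vanishing of $W_v$ forces the image of $v$ in that top Jacquet quotient to vanish. Combining this with the derivative filtration of $\pi|_{P_n}$ from Lemma \ref{filtr} and a d\'evissage over $k$, one concludes $v = 0$.

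The main obstacle is the asymptotic step: showing that the Langlands ordering genuinely separates the exponents of $\pi$ along $A_n$. This is the technical heart of \cite{JS3}, and the condition is essential rather than merely convenient, since representations not of Langlands' type can exhibit non-generic subrepresentations lying in the kernel of the Whittaker map, so no argument weaker than the full Langlands ordering can succeed.
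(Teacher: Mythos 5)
Your approach matches the paper exactly: Proposition \ref{langlandswhittaker} is stated there without proof, simply as a citation of the main result of \cite{JS3}, which is precisely the reference you invoke. The sketch you give of the Jacquet-integral construction and the asymptotic separation of exponents is consistent with the method of that reference, though note that the Langlands inequalities are only non-strict, so the integral defining $\lambda$ converges absolutely only on an open cone and must be extended to the boundary by analytic continuation.
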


If $\pi$ is irreducible and embeds in $Ind_{N_n}^{G_n}(\theta)$, 
it is a well-known theorem of Gelfand and Kazhdan (\cite{GK}) that the multiplicity of $\pi$ in $Ind_{N_n}^{G_n}(\theta)$ is $1$, 
we then say that $\pi$ is \textit{generic}. We recall (Theorem 9.7 of \cite{Z}), that every generic representation $\pi$ of $GL(n,F)$ 
can be written uniquely, 
up to permutation of the terms in the product, as a commutative product of unlinked (see 4.1. of \cite{Z}) segments 
$$[\nu^{-(k_1(\pi)-1)}\rho_1(\pi), \dots,\rho_1(\pi)]\times \dots \times [\nu^{-(k_t(\pi)-1)}\rho_t(\pi), \dots,\rho_t(\pi)].$$ 
In particular, generic representations are the representations of Langlands' type which are irreducible.\\

We now recall from \cite{JPS2}, some results about the $L$-function of a pair of representations of Whittaker type. Let 
$\pi$ be a representation of $G_n$ of Whittaker type, and $\pi'$ be a representation of Whittaker type of $G_m$, 
with respective Whittaker models $W(\pi,\theta)$ and $W(\pi',\theta^{-1})$, for $n\geq m\geq 1$.\\ 
When $n>m$, and $W$ and $W'$ are respectively in $W(\pi,\theta)$ and $W(\pi',\theta^{-1})$, 
we write $$I(W,W',s)=\int_{N_m\backslash G_m} W \begin{pmatrix} g & \\ & I_{n-m} \end{pmatrix} W'(g)\nu(g)^{s-(n-m)/2} dg.$$ 
When $n=m$, and $W$ and $W'$ are respectively in $W(\pi,\theta)$ and $W(\pi',\theta^{-1})$, $\phi$ is in 
$\mathcal{C}_c^{\infty}(F^n)$, and $\eta$ is the row vector $(0,\dots,0,1)$ in the space 
$\mathcal{M}(1,n,F)$ of row matrices $1$ by $n$ with entries in $F$, we write
$$I(W,W',\phi, s)=\int_{N_n\backslash G_n} W (g) W'(g) \phi(\eta g) \nu(g)^{s} dg.$$
It is shown in \cite{JPS2} that these integrals converge absolutely for $Re(s)$ large, and define elements of $\C(q^{-s})$.\\  
If $n>m$, the integrals $I(W,W', s)$ (which we shall also write $I(W',W, s)$ when convenient) span, when $(W,W')$ varies in 
$W(\pi,\theta)\times W(\pi ',\theta^{-1})$, a fractional ideal of $\C[q^{s},q^{-s}]$, which is 
generated by a unique Euler factor $L(\pi,\pi',s)$.
If $n=m$, the integrals $I(W,W',\phi, s)$ span, when $(W,W',\phi)$ varies in 
$W(\pi,\theta)\times W(\pi ',\theta^{-1}) \times \mathcal{C}_c^\infty(F^n)$, a fractional ideal of $\C[q^{s},q^{-s}]$, which is 
generated by a unique Euler factor $L(\pi,\pi',s)$. If $n<m$, we define $L(\pi,\pi',s)$ to be $L(\pi',\pi,s)$.\\
We recall Proposition 9.4 of \cite{JPS2}.

\begin{prop}\label{pairlanglands}
For $n\geq m \geq 1$, if $\pi=\D_1\times \dots \times \D_t$ is a representation of $G_n$ induced of Langlands' type, and 
$\pi'=\D_1'\times \dots \times \D_u'$ is a representation of $G_m$ induced of Langlands' type, then 
$L(\pi,\pi',s)=\prod_{i,j}L(\D_i,\D_j',s)$.
\end{prop}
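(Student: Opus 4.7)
The plan has two complementary parts: a divisibility statement (every Rankin--Selberg integral for the pair $(\pi,\pi')$ is a polynomial multiple of $\prod_{i,j} L(\Delta_i,\Delta_j',s)$) and an attainability statement (this product is itself realised by an integral, possibly up to a unit in $\mathbb{C}[q^s,q^{-s}]$). Together these two facts show that the two fractional ideals coincide, hence so do their generating Euler factors.

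I would proceed by induction on $t+u$, the total number of segments. The base case $t=u=1$ is tautological. For the inductive step, write $\pi=\Delta_1\times\sigma$ with $\sigma=\Delta_2\times\dots\times\Delta_t$, and use the Iwasawa decomposition with respect to the standard parabolic $P_{(n_1,n-n_1)}$ to unfold $W\in W(\pi,\theta)$. A Whittaker function on $\pi$ is obtained via the convergent Jacquet integral from a section of the induced representation; restricting to a suitable Bruhat cell, one rewrites $I(W,W',s)$ as a double integral where the ``inner'' variable lives in $G_{n_1}$ and contributes a Rankin--Selberg integral for $(\Delta_1,\pi')$, while the ``outer'' integration contributes one for $(\sigma,\pi')$. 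After tracking the characters $\delta_P^{1/2}$ from normalised induction and the shift $\nu(g)^{s-(n-m)/2}$ from the definition of the integral, the divisibility $L(\Delta_1,\pi',s)L(\sigma,\pi',s)\mid I(W,W',s)$ follows. Iterating in the $\pi'$ variable then yields divisibility by $\prod_{i,j}L(\Delta_i,\Delta_j',s)$.

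For attainability, I would use Proposition \ref{langlandswhittaker}: since $\pi$ and $\pi'$ are induced of Langlands' type, their Whittaker models are faithful, so one has at disposal ``all'' Whittaker functions coming from flat sections. Combining the inductive decomposition above with the existence of test vectors in the segment cases (which produce exactly $L(\Delta_i,\Delta_j',s)$ as an RS integral) gives test data $(W,W')$, or $(W,W',\phi)$ if $n=m$, whose integral is a nonzero constant multiple of $\prod_{i,j}L(\Delta_i,\Delta_j',s)$. Alternatively, one can invoke the multiplicativity of $\gamma$-factors for induced representations (a consequence of the local functional equation), which combined with the divisibility direction forces equality of the Euler factors.

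The main obstacle is the careful bookkeeping in the inductive step: reconciling the normalising character $\delta_P^{1/2}$ with the power of $\nu$ built into the definition of $I(W,W',s)$, and ensuring that the unfolded integral converges in a common half-plane so that the Fubini-type swap needed to factor it through $(\Delta_1,\pi')$ and $(\sigma,\pi')$ is legitimate. The Langlands' type hypothesis is used precisely here, since it guarantees a half-plane of absolute convergence on which the computation can be carried out and then extended to all $s$ by meromorphic continuation.
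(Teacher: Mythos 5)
The paper does not prove this proposition at all: the sentence immediately preceding it reads ``We recall Proposition 9.4 of [J-P-S~2]'' and nothing more, so the statement is simply imported from Jacquet--Piatetskii-Shapiro--Shalika's \emph{Rankin--Selberg Convolutions}. Your sketch is therefore not in competition with any argument in this paper; the relevant comparison is with Sections~3 and~9 of that source. As an outline of the argument there, your plan has roughly the right shape --- unfold $I(W,W',s)$ along the parabolic inducing $\pi$, compare fractional ideals in both directions, and use the local functional equation --- and the second route you offer for the ``attainability'' half (multiplicativity of $\gamma$-factors) is essentially what the source does. Your first suggested route for attainability, producing explicit test vectors for each pair $(\Delta_i,\Delta_j')$ realising $L(\Delta_i,\Delta_j',s)$ exactly and then assembling them, is where the sketch is weakest: such test vectors are not supplied by anything you have at hand, and at this level of generality obtaining them is not substantially easier than the proposition itself, so that route as stated is circular. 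Also beware that the Fubini/unfolding step you describe gives one inclusion of fractional ideals (roughly, that the poles of $L(\pi,\pi',s)$ are no worse than those of the product); pinning down the opposite inclusion is precisely where the $\gamma$-factor multiplicativity and the pole-comparison bookkeeping are forced on you, and the Langlands-type ordering hypothesis is used to keep the intermediate integrals convergent and the induced representations nondegenerate. In short: correct instincts, but the paper offers no proof to compare against, and the substantive content lives entirely in [J-P-S~2].
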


Finally, we recall that it is proved in Section 5 of \cite{JPS2}, that if $\pi$ is a generic representation of $G_n$, 
and $\chi$ is a character of $G_1$, one has 
the equality $L(\pi,\chi,s)=L(\chi\otimes \pi,s)$ between the Rankin-Selberg $L$-function on the left, and the Godement-Jacquet 
$L$-function on the right. 

\subsection{Unramified representations}\label{genunram}

We say that a representation $(\pi,V)$ of $G_n$ is \textit{unramified} (or \textit{spherical}), if it admits a nonzero $G_n(\o)$-fixed vector in its space.
If it is the case, we recall that the Hecke convolution algebra $\mathcal{H}_n$
 (whose elements are the functions with compact support on $G_n$, which are left and right invariant under $G_n(\o)$), acts on $V^{G_n(\o)}$ (when $V^{G_n(\o)}$ is of dimension $1$, the action is necessarily by a character). 
The Hecke algebra $\mathcal{H}_n$ is commutative and isomorphic by the Satake isomorphism to the algebra 
$\C[X_1^{\pm 1},\dots,X_n^{\pm 1}]^{\mathcal{S}_{n}}$ according to \cite{Sat}. Hence, a character of $\mathcal{H}_n$ is associated to a unique set of 
nonzero complex numbers $\{z_1,\dots,z_n\}$, corresponding to the evaluation $P\mapsto P(z_1^{\pm 1},\dots,z_n^{\pm 1})$ from 
$\C[X_1^{\pm 1},\dots,X_{n}^{\pm 1}]^{\mathcal{S}_{n}}$ to $\C$. It is known that when $\pi$ is unramified and irreducible (see Section 4.6 in \cite{Bu} for example), then $V^{G_n(\o)}$ is one dimensional, and that the corresponding character of $\mathcal{H}_n$ determines $\pi$, in which case 
the associated set of nonzero complex numbers is called the \textit{Satake parameter} of $\pi$.

We recall with proofs, some classical facts about parabolically induced spherical representations.

\begin{prop}\label{parabspherical}
 Let $\pi_i$ be a representation of $G_{n_i}$ for $i$ between $1$ and $t$, $n=n_1+\dots+n_t$, $M_{\bar{n}}$ be the standard Levi subgroup of 
$n$ corresponding to the partition $(n_1,\dots,n_t)$, and $\pi=\pi_1\times \dots \times \pi_t$. The vector space 
$Hom_{G_n(\o)}(\1,\pi)$ is isomorphic to $Hom_{M_{\bar{n}}\cap G_n(\o)}(\1,\pi_1\otimes \dots \otimes \pi_t)$. 
\end{prop}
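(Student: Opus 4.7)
My plan is to reduce the right-hand side to the left by combining the Iwasawa decomposition, Frobenius reciprocity for compact groups, and the trivial action of $N_{\bar{n}}$ on the inducing data.

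First I would use the Iwasawa decomposition $G_n = P_{\bar{n}}\, G_n(\o)$, which identifies the restriction $\pi|_{G_n(\o)}$ with $\mathrm{Ind}_{P_{\bar{n}}\cap G_n(\o)}^{G_n(\o)}\bigl(\d_{P_{\bar{n}}}^{1/2}(\pi_1\otimes\cdots\otimes\pi_t)\bigr)$, by sending $f$ to its restriction to $G_n(\o)$ (the inverse sends $f_0$ on $G_n(\o)$ to the unique extension $f(pk)=\d_{P_{\bar n}}^{1/2}(p)\pi_1\otimes\cdots\otimes\pi_t(p)f_0(k)$, which is well defined because $P_{\bar n}\cap G_n(\o)$ acts by the correct character on $f_0$).

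Next I would observe that on $P_{\bar{n}}\cap G_n(\o)$ the modular character $\d_{P_{\bar{n}}}$ is trivial: every element of $G_n(\o)$ has each diagonal block in $G_{n_i}(\o)$, so each block determinant lies in $\o^*$ and has absolute value $1$. Thus the restriction to $G_n(\o)$ is simply $\mathrm{Ind}_{P_{\bar{n}}\cap G_n(\o)}^{G_n(\o)}(\pi_1\otimes\cdots\otimes\pi_t)$. Applying Frobenius reciprocity for the compact group $G_n(\o)$ then yields
$$\mathrm{Hom}_{G_n(\o)}(\1,\pi)\simeq \mathrm{Hom}_{P_{\bar{n}}\cap G_n(\o)}(\1,\pi_1\otimes\cdots\otimes\pi_t).$$

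Finally I would verify that $P_{\bar{n}}\cap G_n(\o) = (M_{\bar{n}}\cap G_n(\o))(N_{\bar{n}}\cap G_n(\o))$: writing $p=mn$ with $m\in M_{\bar{n}}$, $n\in N_{\bar{n}}$, the block diagonal of $p$ equals $m$, so $p\in G_n(\o)$ forces $m\in M_{\bar{n}}\cap G_n(\o)$, whence $n=m^{-1}p\in G_n(\o)$ as well. Since $N_{\bar{n}}$ acts trivially on $\pi_1\otimes\cdots\otimes\pi_t$, a vector is fixed by $P_{\bar{n}}\cap G_n(\o)$ if and only if it is fixed by $M_{\bar{n}}\cap G_n(\o)$. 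This gives the claimed isomorphism.

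The only potentially delicate point is the identification of the restriction of a normalised induced representation with an induced representation on the compact side; the rest is bookkeeping. I would expect the verification of the factorisation $P_{\bar{n}}\cap G_n(\o)=(M_{\bar{n}}\cap G_n(\o))(N_{\bar{n}}\cap G_n(\o))$ and the triviality of $\d_{P_{\bar{n}}}$ on it to be the few lines where one must actually argue, everything else being formal.
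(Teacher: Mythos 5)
Your proof is correct and takes essentially the same route as the paper's: the paper invokes Mackey's theory (which, with a single double coset $P_{\bar n}G_n(\o)=G_n$, is exactly your Iwasawa argument) to identify the restriction of $\pi$ to $G_n(\o)$ with the induced representation from the compact parabolic, and then applies Frobenius reciprocity. You spell out the bookkeeping the paper leaves implicit — triviality of $\d_{P_{\bar n}}$ on $G_n(\o)$, the factorisation $P_{\bar n}\cap G_n(\o)=(M_{\bar n}\cap G_n(\o))(N_{\bar n}\cap G_n(\o))$, and passing from $P_{\bar n}\cap G_n(\o)$-fixed vectors to $M_{\bar n}\cap G_n(\o)$-fixed vectors via triviality of the $N_{\bar n}$-action — which is all sound, modulo one tiny omission in the factorisation step: to conclude $m\in G_n(\o)$ from $p\in G_n(\o)$ one should also note that $\det(p)=\prod_i\det(g_i)\in\o^*$ forces each $\det(g_i)\in\o^*$, not merely that the entries of $m$ lie in $\o$.
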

\begin{proof}
From Mackey's theory (\cite{BZ}, Theorem 5.2.), as $G_n$ is equal to the double-class $P_{\bar{n}}G_n(\o)$, the restriction of $\pi$ to 
$G_n(\o)$ is equal to $Ind_{M_{\bar{n}}\cap G_n(\o)}^{G_n(\o)}(\pi_1\otimes \dots \otimes \pi_t)$. The result then 
follows from Frobenius reciprocity law.
\end{proof}

We have the following corollaries to this.

\begin{cor}\label{sphericalsubquotient}
Let $\chi_1,\dots,\chi_n$ be unramified characters of $F^*$, then the representation $\pi=\chi_1\times \dots \times \chi_n$ of $G_n$ 
is unramified, and $\pi^{G_n(\o)}$ is one dimensional. In particular, by the exactness of the functor 
$V\mapsto V^{G_n(\o)}$ from $Alg(G_n)$ to $Alg(G_0)$, the representation $\pi$ has only one irreducible spherical subquotient.
\end{cor}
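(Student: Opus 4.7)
The plan is to apply Proposition \ref{parabspherical} directly to the partition $\bar{n}=(1,\dots,1)$ and then read off the one-dimensionality from the triviality of the $\chi_i$ on $\o^*$.

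First, with $n_i=1$ for every $i$, the standard Levi $M_{\bar{n}}$ is the diagonal torus $A_n$, and $M_{\bar{n}}\cap G_n(\o) = (\o^*)^n$. Proposition \ref{parabspherical} gives
\[
\pi^{G_n(\o)} = \mathrm{Hom}_{G_n(\o)}(\1,\pi) \simeq \mathrm{Hom}_{(\o^*)^n}(\1, \chi_1\otimes\dots\otimes\chi_n).
\]
Since each $\chi_i$ is unramified, i.e.\ trivial on $\o^*$, the tensor product $\chi_1\otimes\dots\otimes\chi_n$ is the trivial character of $(\o^*)^n$. Hence the Hom space on the right is canonically $\C$, so $\pi^{G_n(\o)}$ is one-dimensional and in particular nonzero, showing that $\pi$ is unramified.

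For the last assertion, I would use the exactness of the functor $V\mapsto V^{G_n(\o)}$ on $Alg(G_n)$: picking a Jordan--H\"older filtration of $\pi$, the dimension of $\pi^{G_n(\o)}$ equals the sum, over the irreducible subquotients $\sigma$ (with multiplicity), of $\dim \sigma^{G_n(\o)}$. Since every irreducible spherical representation has a one-dimensional space of $G_n(\o)$-fixed vectors (as recalled in Section \ref{genunram}), and non-spherical subquotients contribute $0$, the equality $\dim\pi^{G_n(\o)}=1$ forces exactly one irreducible subquotient of $\pi$ to be spherical. No step is really the hard part here: the whole argument is a direct application of Proposition \ref{parabspherical} plus the exactness of invariants under a compact open subgroup; one only has to be careful to invoke the fact that an irreducible unramified representation has a one-dimensional spherical line in order to pass from ``$\dim\pi^{G_n(\o)}=1$'' to ``unique spherical subquotient''.
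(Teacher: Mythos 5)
Your proof is correct and essentially the same as the paper's intended argument (the paper leaves this corollary without an explicit proof, treating it as an immediate consequence of Proposition \ref{parabspherical} applied to the partition $(1,\dots,1)$, together with exactness of the $G_n(\o)$-invariants functor). The only slight redundancy is the appeal to the one-dimensionality of the spherical line of an irreducible unramified representation: since each Jordan--H\"older subquotient contributes a non-negative integer to $\dim\pi^{G_n(\o)}=1$, exactly one subquotient is spherical already, without needing to know its invariant space is exactly one-dimensional.
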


When $\pi=|.|^{s_1}\times \dots \times |.|^{s_n}$ is an unramified representation of $G_n$ induced from the Borel subgroup, we just saw that $\pi^{G_n(\o)}$ 
is of dimension $1$. The character of $\mathcal{H}_n$, given by its action on $\pi^{G_n(\o)}$ corresponds to the set $\{q^{-s_1},\dots,q^{-s_n}\}$. 
If $\pi$ 
is moreover of Langlands' type, it is the Satake parameter of $Q(\pi)$ tahnks to the next corollary.

\begin{cor}\label{langlandspherical}
If $\pi=\chi_1\times \dots \times \chi_n$ is as above, but moreover of Langlands' type, then it is the irreducible quotient of $\pi$ which 
is spherical. In particular, $\pi$ is spanned by $\pi^{G_n(\o)}$.
\end{cor}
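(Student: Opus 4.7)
I proceed in two stages: first establish that the Langlands quotient $Q(\pi)$ is spherical, then deduce that $\pi$ is cyclically generated by any nonzero spherical vector. By Corollary \ref{sphericalsubquotient}, $\pi^{G_n(\o)}$ is one-dimensional; fix a generator $v_0$. By Proposition \ref{Lgtype}, $\pi$ has a unique irreducible quotient $Q(\pi)$; let $K$ denote the kernel of the canonical surjection $p : \pi \to Q(\pi)$. Applying the exact functor $V \mapsto V^{G_n(\o)}$ to the short exact sequence $0 \to K \to \pi \to Q(\pi) \to 0$ yields
$$0 \to K^{G_n(\o)} \to \pi^{G_n(\o)} \to Q(\pi)^{G_n(\o)} \to 0,$$
so exactly one of $K^{G_n(\o)}$ and $Q(\pi)^{G_n(\o)}$ is zero.

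The main obstacle is Stage 1: showing $v_0 \notin K$, equivalently that $Q(\pi)^{G_n(\o)} \neq 0$. When $\pi$ is irreducible this is immediate. In the reducible case, one must identify the unique irreducible spherical subquotient $\sigma$ provided by Corollary \ref{sphericalsubquotient} with $Q(\pi)$, ruling out that $\sigma$ is a composition factor of $K$ (if it were, then the image of a spherical vector of some lift in the composition series would produce a nonzero element of $K^{G_n(\o)}$). My approach is a Jacquet-module / Frobenius-reciprocity argument: via Proposition \ref{parabspherical} the Hecke algebra $\mathcal{H}_n$ acts on $\pi^{G_n(\o)}$ through the Satake parameter $\{q^{-s_1}, \dots, q^{-s_n}\}$ (with $\chi_i = |\cdot|^{s_i}$), and the Langlands-type ordering on the $\chi_i$ ensures, via the second adjunction between parabolic induction and the opposite Jacquet functor, that this parameter matches that of $Q(\pi)$; the irreducible with this Satake parameter appears as a quotient of $\pi$, hence coincides with $Q(\pi)$ by the uniqueness in Proposition \ref{Lgtype}.

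Stage 2 then follows formally. Let $V_0 \subset \pi$ be the $G_n$-submodule generated by $v_0$. Since $p(v_0)$ spans $Q(\pi)^{G_n(\o)} \neq 0$ and $Q(\pi)$ is irreducible, $p(V_0) = Q(\pi)$. Applying $V \mapsto V^{G_n(\o)}$ to $0 \to V_0 \to \pi \to \pi/V_0 \to 0$ and noting $v_0 \in V_0$, we get $V_0^{G_n(\o)} = \pi^{G_n(\o)}$, whence $(\pi/V_0)^{G_n(\o)} = 0$. If $\pi/V_0$ were nonzero, then as a finite-length admissible quotient of $\pi$ it would admit a nonzero irreducible quotient $\tau$; but $\tau$ would be a quotient of $\pi$ and hence isomorphic to $Q(\pi)$ by Proposition \ref{Lgtype}, so by exactness of $G_n(\o)$-invariants the surjection $\pi/V_0 \twoheadrightarrow \tau$ would yield $0 = (\pi/V_0)^{G_n(\o)} \twoheadrightarrow \tau^{G_n(\o)} = Q(\pi)^{G_n(\o)} \neq 0$, a contradiction. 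Hence $V_0 = \pi$, and $\pi$ is spanned as a $G_n$-module by $\pi^{G_n(\o)}$.
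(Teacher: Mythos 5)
Your Stage 2 is correct, and it in fact supplies the details that the paper leaves implicit for the ``in particular'' clause. The difficulty is Stage 1, which is where all the content of the corollary lies, and there your argument does not go through as written. You correctly isolate the obstacle --- one must show that the unique irreducible spherical subquotient $\sigma$ of $\pi$ furnished by Corollary \ref{sphericalsubquotient} is the Langlands quotient $Q(\pi)$ rather than a constituent of the kernel $K$ --- but the sentence ``the Langlands-type ordering on the $\chi_i$ ensures, via the second adjunction \dots, that this parameter matches that of $Q(\pi)$'' asserts the conclusion rather than proving it: speaking of the Satake parameter of $Q(\pi)$ already presupposes that $Q(\pi)$ is spherical. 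To run the second adjunction honestly you would have to show that $\chi_1\otimes\dots\otimes\chi_n$ embeds in the Jacquet module of $\sigma$ (not of some other constituent) with respect to the opposite Borel, i.e.\ that the dominant exponent occurs in $\sigma$; this requires a genuine analysis of the exponents of the constituents of an unramified principal series (Casselman--Borel Iwahori--Hecke theory, or the Gindikin--Karpelevich computation of the intertwining operator on the spherical vector). None of that is carried out, and second adjointness is not among the tools the paper puts on the table. Likewise, ``the irreducible with this Satake parameter appears as a quotient of $\pi$'' is exactly the statement in question, not something you may quote.

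The paper sidesteps all of this by computing $Q(\pi)$ explicitly. Using Zelevinsky's classification, it regroups $\pi$ as $\pi_1\times\dots\times\pi_t$ with each $\pi_i=\nu^{(l_i-1)/2}\mu_i\times\dots\times\nu^{(1-l_i)/2}\mu_i$ for unramified characters $\mu_i$ and suitably unlinked segments; the irreducible quotient of $\pi_i$ is the unramified character $\tilde\mu_i=\mu_i\circ\det$ of $G_{l_i}$, and $\tilde\mu_1\times\dots\times\tilde\mu_t$ is then an irreducible (Zelevinsky, Theorem 4.2) quotient of $\pi$, hence equals $Q(\pi)$ by Proposition \ref{Lgtype}, and it is spherical by Proposition \ref{parabspherical}. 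To complete your proof you should replace Stage 1 by an argument of this kind, or by a precise reference for the fact that the spherical constituent of an unramified principal series in Langlands order is its Langlands quotient; your Stage 2 can then be kept verbatim.
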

\begin{proof}
One checks by induction on $n$ the following assertion: the representation $\pi$ can be written 
$\pi_1\times \dots \times \pi_t$, where for each $i$, the representation $\pi_i$ is equal to 
$\nu^{(l_i -1)/2} \mu_i\times \nu^{(l_i -3)/2} \mu_i\dots \times  \nu^{(1-l_i)/2}\mu_i$ for a positive 
integer $l_i$ and an unramified character $\mu_i$ of $F^*$, and such that the segments 
$[\pi'_i]=[\nu^{(1-l_i)/2}\mu_i,\dots,\nu^{(l_i -1)/2} \mu_i]$ are unlinked, the character $\nu^{(1-l_t)/2}\mu_t$ is equal 
to $\chi_n$, and the segment 
$[\pi'_t]$ contains any other $[\pi'_i]$ in which $\chi_n$ occurs (i.e. if $\chi_n=\nu^r \mu_i$ for $r\in \frac{1}{2}\Z$ between 
$(1-l_i)/2$ and $(l_i-1)/2$, then $r=(1-l_i)/2$ and $l_i \leq l_t$). From \cite{Z}, Section 2, the 
irreducible quotient of $\pi_i$ is the irreducible submodule of 
$\nu^{(1-l_i)/2} \mu_i\times \dots \times \nu^{(l_i -1)/2} \mu_i$, i.e. the character 
$\tilde{\mu_i}=\mu_i\circ det$ of $G_{l_i}$. The representation 
$\tilde{\mu_1}\times \dots \times\tilde{\mu_l}$ is thus a quotient of $\pi$, which is irreducible by Theorem 
4.2 of \cite{Z}, hence it is its irreducible quotient. It is spherical from Proposition \ref{parabspherical}, as the representations $\tilde{\mu_i}$ are spherical. 
\end{proof}

\begin{cor}\label{segmentramified}
A segment $\D$ of $G_n$, for $n\geq 2$, is always ramified. 
\end{cor}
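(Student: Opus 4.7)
The plan is to argue by contradiction, combining the structure of spherical subquotients of principal series with the incompatibility of cuspidality and sphericality beyond $G_1$. Suppose $\D=[\nu^{-(l-1)}\rho,\dots,\rho]$, with $n=ml\geq 2$ and $\rho$ cuspidal of $G_m$, has a nonzero $G_n(\o)$-fixed vector. Because $\D$ is a quotient of $\pi=\nu^{-(l-1)}\rho\times\dots\times\rho$ and the functor $V\mapsto V^{G_n(\o)}$ is exact, $\pi^{G_n(\o)}$ is also nonzero; Proposition \ref{parabspherical} then forces $\nu^{-(l-1)}\rho\otimes\dots\otimes\rho$ to admit a nonzero vector fixed by the maximal compact of the Levi $G_m^{l}$, whence each factor $\nu^{-i}\rho$, and in particular $\rho$, must be spherical.

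For the case $m\geq 2$, I would derive a contradiction directly from \emph{spherical + cuspidal on $G_m$}. Being spherical and irreducible, $\rho$ has a well-defined Satake parameter $\{q^{-s_i}\}$, and by Corollary \ref{sphericalsubquotient} it is the unique irreducible spherical subquotient of the principal series $|.|^{s_1}\times\dots\times |.|^{s_m}$. Reordering the inducing characters into Langlands' order and applying the proof of Corollary \ref{langlandspherical}, this subquotient is isomorphic to a product $\tilde\mu_1\times\dots\times\tilde\mu_u$ of characters $\tilde\mu_i=\mu_i\circ\det$ of smaller general linear groups; so $\rho$ is either a character of $G_m$ (when $u=1$) or a parabolic induction from a proper Levi (when $u\geq 2$). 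In either situation a Jacquet module of $\rho$ along some proper parabolic is visibly nonzero---characters being trivial on unipotent radicals, and the $u\geq 2$ case yielding a nonzero Jacquet module by Frobenius reciprocity applied to the identity map---contradicting the cuspidality of $\rho$.

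If instead $m=1$, then $\rho=\chi$ is an unramified character and $n=l\geq 2$. The reordering $\pi'=\chi\times\nu^{-1}\chi\times\dots\times\nu^{-(l-1)}\chi$ of $\pi$ is of Langlands' type (real parts of the central characters decreasing from $0$ to $-(l-1)$), and Corollary \ref{langlandspherical} identifies its unique spherical irreducible quotient as the single character $(\nu^{-(l-1)/2}\chi)\circ\det$ of $G_l$. Since $\pi$ and $\pi'$ differ only by a permutation of the inducing characters, they share the same Satake parameter, and hence the same unique spherical irreducible subquotient; so $\D$ (spherical by hypothesis) would coincide with this character. But a single segment is of Whittaker type by the remarks following Proposition \ref{langlandswhittaker}, so $\D$ is generic once irreducible, whereas a character of $G_l$ with $l\geq 2$ is trivial on $N_l$ and therefore admits no nonzero $(N_l,\theta)$-equivariant functional for the nontrivial $\theta$. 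This contradiction completes the proof. The main obstacle I anticipate lies in the $m\geq 2$ subcase: one has to cleanly extract from the explicit form of $\rho$ provided by Corollary \ref{langlandspherical} a nonzero proper Jacquet module, whereas the $m=1$ subcase is purely a matter of opposing genericity to triviality on $N_l$.
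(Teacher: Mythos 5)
Your proposal is correct, but it takes a genuinely different and considerably longer route than the paper's. The paper's argument is essentially one line via the injectivity part of the Langlands classification bijection in Proposition~\ref{Lgtype}: if $\D$ were unramified, it would have a Satake parameter $\{q^{-s_i}\}$, hence would equal the Langlands quotient $Q(\pi)$ for the Langlands-ordered principal series $\pi=|.|^{s_1}\times\dots\times|.|^{s_n}$; but $\D$, being itself a single segment, is also its own Langlands datum, so injectivity of $\pi'\mapsto Q(\pi')$ would force $\D=\pi$, which is absurd since a segment of $G_n$ with $n\geq 2$ is not a product of $n$ segments of $G_1$. You instead pass first to the cuspidal support $\rho$ via Proposition~\ref{parabspherical}, reducing to the assertion that $\rho$ is spherical, and then split into cases: for $m\geq 2$ you identify $\rho$ with a product of characters via the proof of Corollary~\ref{langlandspherical} and contradict cuspidality through nonvanishing of Jacquet modules (Frobenius reciprocity for the identity map); for $m=1$ you oppose the genericity of the segment $\D$ to the non-genericity of a character of $G_l$, $l\geq 2$. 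Your route yields a slightly more general and reusable intermediate fact (a cuspidal representation of $G_m$, $m\geq 2$, cannot be spherical) and stays close to concrete functorial properties (Jacquet modules, Whittaker functionals), nicely exercising the paper's auxiliary results; the price is a case distinction and an appeal to adjointness of the Jacquet functor, which the paper's shorter classification argument avoids entirely. Both are valid, and the paper simply opts for the most economical path.
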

\begin{proof}
If $\D$ was unramified, as it is irreducible, its Satake parameter would be equal to a set $\{q^{-s_1},\dots,q^{-s_n}\}$, hence the same 
as that of $Q(\pi)$, for $\pi=|.|^{s_1}\times \dots \times |.|^{s_n}$ (if we order the $s_i$'s in a correct way). In particular, 
$\D$ would be equal to $Q(\pi)$, which is absurd according to Proposition \ref{Lgtype}.
\end{proof}
 
For unramified representations of Langlands' type, normalised spherical Whittaker functions are test functions for $L$ factors. 
If $\pi$ is an unramified representation of Langlands' type, we denote by $W_{\pi}^0$ the spherical function in $W(\pi,\theta)$ which is equal to $1$ 
on $G_n(\o)$, and call it the normalised spherical Whittaker function of $\pi$. 
In \cite{S}, Shintani gave an explicit formula for $W_{\pi}^0$ in terms of the Satake parameter of $\pi$.\\
 Using this formula, Jacquet and Shalika found (Proposition 2.3 of \cite{JS}, and equality (3) in Section 1 of 
\cite{JS2}), for $\pi$ and $\pi'$ two unramified representations of Langlands' type (see the discussion after Equations 
 (\ref{test1}) and (\ref{test2}) below) of 
$G_n$ and $G_m$ respectively, and for correct normalisations of Haar measures, the equalities: 
$$L( \pi,\pi',s)=I( W_{\pi}^0,W_{\pi'}^0,\mathbf{1}_{\o^{n}},s)$$ 
\begin{equation}\label{test1}=\int_{A_{n}}W_{\pi}^0(a)W_{\pi'}^0(a)\mathbf{1}_{\o}(a_{n})\delta_{B_{n}}(a)^{-1}\nu(a)^{s}d^*a,\end{equation} when 
$n=m$, and $$L( \pi,\pi',s)=I(W_{\pi}^0,W_{\pi'}^0 ,s)$$ 
\begin{equation}\label{test2}=\int_{A_m}W_{\pi}^0 \begin{pmatrix}a & \\ & I_{n-m}\end{pmatrix}
W_{\pi'}^0(a)\delta_{B_m}(a)^{-1}\nu(a)^{s-(n-m)/2}d^*a\end{equation}
 when $n>m$.\\
 In the aforementioned papers, Jacquet and Shalika work with generic representations, however, their proofs extend 
verbatim to unramified representations of Langlands' type. Indeed, the formula for $W_{\pi}^0$ in terms of Satake parameters 
is still valid, and thanks to Proposition \ref{pairlanglands}, the factor $L(\pi,\pi',s)$ is still equal to 
(using notations of \cite{JS} and \cite{JS2}) the Artin 
factor $1/det(1-q^{-s}A\otimes A')$ for $A$ and $A'$ diagonal matrices corresponding to the Satake parameters 
of $\pi$ and $\pi'$ respectively. In particular, when $\pi$ is generic, $W_{\pi}^0$ is the essential vector of $\pi$.\\

Now let $\pi=\D_1\times \dots \times \D_t$ be a generic representation of $G_n$, written as a unique product of the 
unlinked segments $\D_i=[\nu^{-(k_i(\pi)-1)}\rho_i(\pi), \dots,\rho_i(\pi)]$, and 
$\pi'=\mu_1\times \dots \times \mu_{m}$ be an unramified 
representation of $G_{m}$ of Langlands' type, for $1\leq m\leq n$. One has, according to Proposition \ref{pairlanglands} and 
Theorem 8.2. of \cite{JPS2} (whose proof is independant of \cite{JPS}), the equality of Rankin-Selberg $L$-functions 
$L(\pi,\pi',s)=\prod_{i,j} L(\rho_i(\pi),\mu_j,s)$.\\
We notice that $L(\rho_i(\pi),\mu_j,s)$ is equal to $1$ unless $\rho_i(\pi)$ is an unramified character of $G_1$.
Hence, one has the equality 
$$L(\pi,\pi',s)=\prod_{\{i,\rho_i(\pi)\in  \widehat{F^*/\o^*},j\}}L(\rho_i(\pi),\mu_j,s).$$ 
This incites us to introduce the following representation.

\begin{df}\label{piu} Let $\pi= \D_1\times \dots \times \D_t$ be a generic representation of $G_n$, with 
$\D_i=[\nu^{-(k_i(\pi)-1)}\rho_i(\pi), \dots,\rho_i(\pi)]$. 
Let $r$ be the cardinality of the set $\{\rho_j(\pi), \rho_j(\pi)\in  \widehat{F^*/\o^*}\}$. When this set is non empty, denote by 
$\chi_1,\dots,\chi_r$ its elements ordered such that $Re(\chi_i)\geq Re(\chi_{i+1})$ for $1\leq i \leq r-1$. 
We define $\pi_u$ as the trivial representation of $G_0$ when $r=0$, and as the unramified representation of Langlands type
$\chi_1\times \dots \times \chi_r$ of $G_r$ when $r\geq 1$.\end{df}
 
Let $\pi$ a generic representation of $G_n$, and $\pi'$ be an 
unramified representation of $G_m$ of Langlands' type, with $1\leq m \leq n$.  If we set $L(\pi_u,\pi',s)=1$ when $\pi_u$ 
is the trivial representation of $G_0$, we have, according to Proposition \ref{pairlanglands}:
 \begin{equation}\label{test3}L(\pi,\pi',s)= L(\pi_u,\pi',s).\end{equation}
 From now on, we will order the segments $\D_i$ in 
the generic representation $\pi$, such that $\rho_i(\pi)$ is an unramified character $\chi_i$ of $G_1$ for $1\leq i \leq r$, is 
not such a character for $i\geq r+1$, and $Re(\chi_i)\geq Re(\chi_{i+1})$ for $1\leq i \leq r-1$.

\section{Mirabolic restriction, sphericity, and restriction of Whittaker functions}\label{restriction}

In this section, we first give results on the derivative functors and how they act on subspaces fixed by compact subgroups, then we recall 
some results from \cite{CP} about their interpretation in terms of restriction of Whittaker functions.
We introduce a few more notations, in order to get a handy parametrisation of the diagonal torus of $G_n$, in terms of simple roots. For $k\leq n$, 
let $Z_k$ be the center of $G_k$ naturally embedded in $G_n$; we parametrise it by 
$F^*$ using the morphism $\beta_k: z_k \mapsto diag(z_kI_k,I_{n-k})$. Hence the maximal torus $A_n$ of 
$G_n$ is the direct product $Z_1.Z_2\dots Z_{n-1}.Z_n$. We will sometimes (but not always) omit the $\beta_k$'s in 
this parametrisation and write $(z_1,\dots,z_{n})$ for 
the element $\beta_1(z_1)\dots \beta_n(z_n)$ of $A_n$. Notice that the $i$-th simple root $\alpha_i$ has 
the property that $\alpha_i(z_1,\dots,z_n)=z_i$.

\subsection{Mirabolic restriction and sphericity}

We first give a corollary of Proposition \ref{baseder}, about a concrete interpretation of $\Phi^-$, when restricted to 
$\Phi^+(Alg(P_n))$. Property d) of the aforementioned proposition says that $\Phi^{-}$ sends $\Phi^{+}\tau$ surjectively onto a 
$P_{n}$-module isomorphic to $\tau$.
Writing $\Phi^{+}\tau$ as $Ind_{P_{n}U_{n+1}}^{P_{n+1}}(\delta_{P_{n+1}}^{1/2}\tau \otimes \theta)$, we want to 
make the map $\Phi^{-}$ explicit between $Ind_{P_{n}U_{n+1}}^{P_{n+1}}(\delta_{P_{n+1}}^{1/2}\tau \otimes \theta)$ and $\tau$.

\begin{prop}\label{phi-}
For $n \geq 1$, if $\tau$ belongs to $Alg(P_n)$, then $\Phi^{-}$ identifies with the map 
$f\mapsto f(I_{n+1})$ from $\Phi^{+}\tau$ to $\tau$. 
\end{prop}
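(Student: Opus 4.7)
The proposition asserts that the canonical isomorphism $\Phi^{-}\Phi^{+}\tau \simeq \tau$ of Proposition \ref{baseder}(d) is realized concretely by evaluation at the identity. My plan is to define $\mathrm{ev}\colon \Phi^{+}\tau \to \tau$ by $f \mapsto f(I_{n+1})$, check that it descends to a $P_n$-equivariant map $\overline{\mathrm{ev}}\colon \Phi^{-}\Phi^{+}\tau \to \tau$, and then identify $\overline{\mathrm{ev}}$ with the isomorphism in (d).

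The first step is well-definedness on the $(U_{n+1},\theta)$-coinvariants. For $u \in U_{n+1}$, the right translate $\Phi^{+}\tau(u)f$ is $x \mapsto f(xu)$, whose value at $I_{n+1}$ is $f(u) = \delta_{P_{n+1}}(u)^{1/2}\theta(u)f(I_{n+1})$ by the defining equivariance of a section of $\mathrm{ind}_{P_nU_{n+1}}^{P_{n+1}}(\delta_{P_{n+1}}^{1/2}\tau \otimes \theta)$. A direct calculation (or the observation that $U_{n+1} \subseteq [P_{n+1},P_{n+1}]$, since $G_n$ acts transitively enough on the abelian group $U_{n+1}\simeq F^n$) shows that $\delta_{P_{n+1}}$ is trivial on $U_{n+1}$, so $\mathrm{ev}(\Phi^{+}\tau(u)f) = \theta(u)\mathrm{ev}(f)$, as needed. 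The second step is $P_n$-equivariance: for $p \in P_n$ viewed inside $P_{n+1}$, $\mathrm{ev}(\Phi^{+}\tau(p)f) = f(p) = \delta_{P_{n+1}}(p)^{1/2}\tau(p)f(I_{n+1})$, and the factor $\delta_{P_{n+1}}(p)^{1/2}$ is exactly cancelled by the normalizing twist $\delta_{P_{n+1}}^{-1/2}$ built into the definition of the $\Phi^{-}$-action on the quotient.

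For the identification of $\overline{\mathrm{ev}}$ with the canonical isomorphism of (d), I would invoke the adjunction $\Phi^{-} \dashv \hat{\Phi}^{+}$ of Proposition \ref{baseder}(b'): the natural inclusion $\Phi^{+}\tau \hookrightarrow \hat{\Phi}^{+}\tau$ of compact-support into full induction corresponds, under ordinary Frobenius reciprocity for $\mathrm{Ind}$, to evaluation at $I_{n+1}$, and thus yields via the adjunction a $P_n$-equivariant map $\Phi^{-}\Phi^{+}\tau \to \tau$ that is by construction the canonical arrow underlying (d), and that coincides with $\overline{\mathrm{ev}}$. The main technical point is careful bookkeeping of the normalizing character $\delta_{P_{n+1}}$---especially its triviality on $U_{n+1}$ and its interaction with the half-densities in $\Phi^{+}$ and $\Phi^{-}$; once this is handled, the identification is essentially formal.
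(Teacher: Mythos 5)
Your computations in the first two steps are correct and match what the paper needs: evaluation at $I_{n+1}$ kills $\Phi^{+}\tau(U_{n+1},\theta)$ (using triviality of $\delta_{P_{n+1}}$ on $U_{n+1}$) and the induced map $\overline{\mathrm{ev}}\colon \Phi^{-}\Phi^{+}\tau\to\tau$ is $P_n$-equivariant once the normalising twists are taken into account. The gap is in your last step. Having a well-defined nonzero $P_n$-equivariant map $\overline{\mathrm{ev}}\colon\Phi^{-}\Phi^{+}\tau\to\tau$ together with the abstract isomorphism $\Phi^{-}\Phi^{+}\tau\simeq\tau$ of Proposition \ref{baseder}(d) does not imply that $\overline{\mathrm{ev}}$ itself is an isomorphism: $\tau$ is an arbitrary object of $Alg(P_n)$, so no Schur-type argument is available. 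Your appeal to the adjunction is circular at the decisive point: the inclusion $\Phi^{+}\tau\hookrightarrow\hat{\Phi}^{+}\tau$ does correspond under $\Phi^{-}\dashv\hat{\Phi}^{+}$ to $\overline{\mathrm{ev}}$, but the claim that this arrow ``is by construction the canonical isomorphism underlying (d)'' is exactly what has to be proved (equivalently, that the isomorphism of (d) corresponds under the adjunction to the inclusion rather than to some other map). Unwinding the Bernstein--Zelevinsky construction to verify this is not formal; it is essentially the content of the proposition.

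What is actually needed, and what the paper supplies, are the two halves of bijectivity. For surjectivity, given $v$ in the space of $\tau$ one must exhibit $f\in\Phi^{+}\tau$ (compactly supported modulo $P_nU_{n+1}$) with $f(I_{n+1})=v$; the paper does this by choosing a congruence subgroup $U=I_n+\mathcal{M}(n,\p^{l})$ fixing $v$ under $U\cap P_n$, setting $f=\delta_{P_{n+1}}^{1/2}(p)\theta(x_n)\tau(p)v$ on $P_nU_{n+1}U$ and $f=0$ elsewhere, and checking this is well defined and smooth. This step has no counterpart in your argument. For the reverse inclusion $\ker(\mathrm{ev})\subseteq\Phi^{+}\tau(U_{n+1},\theta)$ (you only prove $\supseteq$), the paper adapts Proposition 1.1 of \cite{CP}, which characterises the $(U_{n+1},\theta)$-kernel as the kernel of the relevant restriction/evaluation map. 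If you supply these two ingredients, your $\overline{\mathrm{ev}}$ is an isomorphism and the identification with $\Phi^{-}$ follows exactly as in your commuting-diagram plan; without them the proof is incomplete.
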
 
\begin{proof}
Call $E_I$ the map $f\mapsto f(I_{n+1})$ from $\Phi^{+}\tau$ to $\tau$. Let's first show that $E_I$ is surjective. If 
$v$ belongs to the space $V$ of $\tau$, let $U =I_n+\mathcal{M}(n,\p^l)$ be a congruence subgroup of $G_n$, with $l$ large enough 
for $v$ to be fixed by $U'=U\cap P_n$. Call 
$f$ the function from $P_n$ to $V$, defined by $$f\begin{pmatrix} pu & x \\ & 1\end{pmatrix}= \delta_{P_{n+1}}^{1/2}(p)\theta(x_n)\tau(p)v,$$
 for $p \in P_n$, $u \in U$, $x\in 
F^n$ with bottom coordinate $x_n$, and $f(p')=0$ when $p'$ is not in $P_n U_{n+1} U$. The map $f$ is well defined, 
because $v$ is fixed by $U'$. It is smooth as it is right invariant under $U_{n+1}(\o) U$ (as $\theta$ is trivial on $\o$). One checks (see the proof of next Propositon for 
a detailed similar computation), that $f$ satsifies the requested left invariance under $P_nU_{n+1}$, so that $f\in \Phi^+(V)$. 
Finally $f(I_{n+1})=v$, thus $f$ 
is a preimage of $v$ via $E_I$.\\
 An easy adaptation of 
the Proposition 1.1. of \cite{CP} then shows that the $P_n$-submodule $\Phi^{+}\tau (U_{n+1},\theta)$ of $\Phi^+(\tau)$ is equal to 
$Ker(E_I)$. As a consequence, the map $E_I$ induces an isomorphism $\overline{E_I}$ between 
$\Phi^- \Phi^+ (\tau)$ and $\tau$, which is $P_n$-equivariant. Hence, the following diagram, with the right isomorphism equal to $\overline{E_I}$, commutes:
$$\begin{array}{lcc}
 \Phi^+(\tau) & \overset{\Phi^-}{\twoheadrightarrow}  &  \Phi^- \Phi^+(\tau) \\
  \ \ \ \upequal   &     & \downsimeq    \\
 \Phi^+(\tau)&  \overset{E_I}{\twoheadrightarrow}  &  \tau 
\end{array}$$
\end{proof}

We recall that for $n\geq 1$, as a consequence of the Iwasawa decomposition, any element $g$ of $G_n$ can be written in the form $zpk$ with 
$z$ in $F^*$, $p$ in $P_n$, and $k$ in $G_n(\o)$. 
We now notice that the restriction of $\Phi^{-}$ to $(\Phi^+\tau)^{P_{n+1}(\o)}$ is surjective onto $\tau^{P_{n}(\o)}$.

\begin{prop}\label{phi-spher}
For $n\geq 1$, the map $f\mapsto f(I_{n+1})$ from $(\Phi^{+}\tau)^{P_{n+1}(\o)}$ to $\tau^{P_{n}(\o)}$ is surjective.
\end{prop}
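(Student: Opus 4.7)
The plan is to construct, for each $v \in \tau^{P_n(\o)}$, an explicit preimage $f \in (\Phi^+\tau)^{P_{n+1}(\o)}$ with $f(I_{n+1}) = v$, by modifying the construction used in the proof of Proposition \ref{phi-} so that it becomes $P_{n+1}(\o)$-invariant on the right.

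First I would define $f_0 : P_{n+1} \to V$, supported on the cell $P_n U_{n+1}$, by
$$f_0(pu) = \d_{P_{n+1}}^{1/2}(p)\, \theta(u)\, \tau(p) v, \quad p \in P_n,\; u \in U_{n+1},$$
and extended by zero outside $P_n U_{n+1}$. Well-definedness follows from $P_n \cap U_{n+1} = \{I_{n+1}\}$ inside $P_{n+1}$, and the left covariance is immediate. To check that $f_0 \in \Phi^+\tau$, I would verify right invariance under the compact open subgroup $P_n(\o) U_{n+1}(\o)$: for $k \in P_n(\o)$, one writes $puk = (pk)(k^{-1}uk)$ and uses both $\tau(k)v = v$ and the fact that $P_n$ normalises $\theta_{|U_{n+1}}$ (recalled at the beginning of Section \ref{BZder}); for $u' \in U_{n+1}(\o)$, one uses that $\theta$ is trivial on $\o$.

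Next I would average $f_0$ to obtain $P_{n+1}(\o)$-invariance on the right:
$$f(g) = \int_{P_{n+1}(\o)} f_0(gk)\, dk.$$
Then $f$ is right $P_{n+1}(\o)$-invariant, the left covariance under $P_n U_{n+1}$ passes through the integral, and the support of $f$ is contained in $P_n U_{n+1} \cdot P_{n+1}(\o)$, hence still compact modulo $P_n U_{n+1}$ on the left. Therefore $f \in (\Phi^+\tau)^{P_{n+1}(\o)}$.

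It remains to compute $f(I_{n+1}) = \int_{P_{n+1}(\o)} f_0(k)\, dk$. The key identification is
$$P_{n+1}(\o) \cap P_n U_{n+1} = P_n(\o)\, U_{n+1}(\o),$$
which comes from the observation that a matrix $\begin{pmatrix} A & b \\ 0 & 1\end{pmatrix} \in P_{n+1}(\o)$ lies in $P_n U_{n+1}$ precisely when $A \in P_n$, and then automatically $A \in P_n \cap G_n(\o) = P_n(\o)$. On this intersection, $f_0$ equals $v$: the modulus $\d_{P_{n+1}}$ is trivial on the compact group $P_n(\o)$, $\theta$ is trivial on $U_{n+1}(\o)$, and $\tau(p)v = v$ for $p \in P_n(\o)$. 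Hence $f(I_{n+1})$ equals a positive volume times $v$, and rescaling $f$ gives the desired preimage. The main point requiring attention is the smoothness verification for $f_0$, which relies crucially on the fact that $P_n$ (and not all of $G_n$) is precisely the stabiliser of the character $\theta_{|U_{n+1}}$ in $G_n$.
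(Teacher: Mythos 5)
Your proof is correct, but it proceeds along a genuinely different route than the paper's. The paper writes down the invariant preimage $f$ in one shot, using the Iwasawa-type decomposition $G_n = F^*P_nG_n(\o)$ recalled just before the proposition: it sets
$f\bigl(\begin{smallmatrix} zpk & x \\ & 1\end{smallmatrix}\bigr)=\delta_{P_{n+1}}^{1/2}(p)\theta(x_n)\mathbf 1_{\o^*}(z)\tau(p)v_0$
and then verifies by hand that this is well-defined (the ambiguity in the decomposition $zpk$ is controlled by the $P_n(\o)$-invariance of $v_0$), that it satisfies the left covariance defining $\Phi^+\tau$, and that it is right $P_{n+1}(\o)$-invariant. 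You instead first produce an element $f_0$ of $\Phi^+\tau$ supported on the cell $P_nU_{n+1}$ with $f_0(I_{n+1})=v$ --- essentially the same construction used in Proposition~\ref{phi-} of the paper, but with the larger smoothness subgroup $P_n(\o)U_{n+1}(\o)$ permitted by the $P_n(\o)$-invariance of $v$ --- and then apply the projector onto $P_{n+1}(\o)$-invariants by averaging. The burden of the argument then shifts to showing the average does not kill the value at the identity, which reduces to the clean intersection computation $P_{n+1}(\o)\cap P_nU_{n+1}=P_n(\o)U_{n+1}(\o)$, on which $f_0\equiv v$. The trade-off is roughly: the paper's direct formula front-loads all the verifications (well-definedness in the Iwasawa decomposition, covariance, right invariance), while your projector argument pushes most of the invariance for free onto the averaging and isolates the real content in a single support/intersection lemma. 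One small remark: your claim that the left covariance of $f_0$ is ``immediate'' does silently use that $P_n$ normalises $\theta|_{U_{n+1}}$ (exactly as in your smoothness check), since the identity $f_0(p_0u_0\cdot pu)=\delta_{P_{n+1}}^{1/2}(p_0)\theta(u_0)\tau(p_0)f_0(pu)$ requires $\theta(p^{-1}u_0p)=\theta(u_0)$; it would be worth saying so.
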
 
\begin{proof}  
Let $v_0$ be a vector in the space of $\tau$ which is ${P_{n}(\o)}$-invariant, we claim that the function 
$f$ defined by 
$$f\begin{pmatrix} zpk & x \\ & 1\end{pmatrix}= \delta_{P_{n+1}}^{1/2}(p)\theta(x_n)\mathbf{1}_{\o^*}(z)
\tau(p)v_0,$$ for $z$ in $F^*$, $p$ in $P_{n}$, $k$ in ${G_{n}(\o)}$, 
and $x$ in $F^{n}$ (with $x$ the transpose of $(x_1,\dots,x_n)$), is a preimage of $v_0$ in $(\Phi^{+}\tau)^{P_{n+1}(\o)}$.\\
First we check that $f$ is well-defined: if $zpk=z'p' k'$, this implies that $z'$ is equal to $z$ mod $\o^*$, and 
$p'$ is equal to $p$ mod $P_n(\o)$. Hence 
$\delta_{P_{n+1}}^{1/2}(p)=\delta_{P_{n+1}}^{1/2}(p')$, $\mathbf{1}_{\o^*}(z')=\mathbf{1}_{\o^*}(z)$, 
and $\tau(p')v_0=\tau(p )v_0$ as $v_0$ is 
${P_{n}(\o)}$-invariant.\\
Then we check that $f$ indeed belongs to $\Phi^+(\tau)$. Let $p_0$ belong to $P_n$ embedded in $P_{n+1}$ by $p\mapsto diag(p,1)$, let 
$u_0$ belong to $U_{n+1}$, and $p_1$ belong to $P_{n+1}$, 
we need to check the relations 
$$f(p_0 p_1)=\delta_{P_{n+1}}^{1/2}(p_0 )\tau(p_0)f(p_1)$$ and 
$$f(u_0 p_1)= \theta(u_0)f(p_1).$$ 
Write $p_1= \begin{pmatrix}  zpk & x \\ & 1\end{pmatrix}$, we have $f(p_0 p_1)= f \begin{pmatrix}  
zp_0pk & p_0x \\ & 1\end{pmatrix}$. As 
$\theta((p_0x)_n)=\theta(x_n)$, we have 
$$f(p_0 p_1)= \delta_{P_{n+1}}^{1/2}(p_0p)\theta(x_n)\mathbf{1}_{\o^*}(z)
\tau(p_0p )v_0=\delta_{P_{n+1}}^{1/2}(p_0)\tau(p_0)f(p_1)$$ as wanted.\\
Write $u_0= \begin{pmatrix} I_n & x_0 \\ & 1\end{pmatrix}$, we also have 
$$f(u_0 p_1)= f \begin{pmatrix} zpk & x_0+ x \\ & 1\end{pmatrix}$$ 
$$= \delta_{P_{n+1}}^{1/2}(p)\theta((x_0+x)_n)\mathbf{1}_{\o^*}(z)
\tau(p )v_0=\theta(u_0)f(p_1)$$ as wanted.\\  
If $\begin{pmatrix} k_0  & x_0 \\ & 1\end{pmatrix}$ belongs to $P_{n+1}(\o)$ (i.e. $k_0\in G_n(\o)$ and $x_0\in \o^n$), as 
$\theta((zpkx_0)_n)=1$ when $z$ belongs to $\o^*$, 
we have $$f (\begin{pmatrix} zpk &   x \\ & 1\end{pmatrix}\begin{pmatrix} k_0  & x_0 \\ & 1\end{pmatrix})=
f \begin{pmatrix} zpkk_0 &  zpkx_0+ x \\ & 1\end{pmatrix}= f \begin{pmatrix}  zpk &   x \\ & 1\end{pmatrix},$$ 
and $f$ is right 
$P_{n+1}(\o)$-invariant. Finally, it is obvious that $f(I_{n+1})=v_0$.
\end{proof}

Now we are able to prove the following property of $\Phi^{-}$, that we will be of great use later.

\begin{prop}\label{spher1}
For $n\geq 1$, if $\tau$ belongs to $Alg(P_n)$, then $\Phi^{-}$ maps $\tau^{P_n(\o)}$ surjectively onto $\tau_{(1)}^{P_{n-1}(\o)}$, and 
$\Psi^{-}$ maps $\tau^{G_{n-1}(\o)}$ surjectively onto ${\tau^{(1)}}^{G_{n-1}(\o)}$.
\end{prop}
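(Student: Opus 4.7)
My plan is to deduce both surjectivity statements by reducing to results already established in this subsection, namely Propositions \ref{phi-} and \ref{phi-spher} for the first assertion, and a standard averaging argument for the second.

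For the first claim, the idea is to exploit the filtration piece $\tau_2 \subset \tau$. By Lemma \ref{filtr}, $\tau_2 = \Phi^{+}(\tau_{(1)})$. Applying the exact functor $\Phi^{-}$ (part a) of Proposition \ref{baseder}) to the short exact sequence $0 \to \tau_2 \to \tau \to \tau/\tau_2 \to 0$, and using that $\tau/\tau_2 \simeq \Psi^{+}\tau^{(1)}$ is annihilated by $\Phi^{-}$ (part c)), together with $\Phi^{-}\Phi^{+} \simeq Id$ (part d)), the inclusion $\tau_2 \hookrightarrow \tau$ induces an isomorphism $\Phi^{-}\tau_2 \overset{\sim}{\to} \Phi^{-}\tau = \tau_{(1)}$. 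Under the identification of Proposition \ref{phi-}, this isomorphism is the evaluation map $f \mapsto f(I_n)$ on $\Phi^{+}(\tau_{(1)})$. Given $w \in \tau_{(1)}^{P_{n-1}(\o)}$, Proposition \ref{phi-spher} produces $f \in (\Phi^{+}\tau_{(1)})^{P_n(\o)}$ with $f(I_n) = w$, and viewing $f$ as an element of $\tau^{P_n(\o)}$ through $\tau_2 \hookrightarrow \tau$ gives the desired preimage.

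For the second claim, an averaging argument works directly, because the kernel of the quotient $\tau \to \tau^{(1)}$, namely $\tau(U_n,1)$, is stable under the full group $G_{n-1}$ (unlike $\tau(U_n,\theta)$, which is only $P_{n-1}$-stable). Given $w \in (\tau^{(1)})^{G_{n-1}(\o)}$, lift it to some $v_0 \in \tau$; by smoothness, $v_0$ is fixed by a compact open subgroup $K'' \subset P_n$, hence by $K' = K'' \cap G_{n-1}(\o)$, which is open (and thus of finite index) in the compact group $G_{n-1}(\o)$. Setting
\[
v = \frac{1}{[G_{n-1}(\o):K']}\sum_{k \in G_{n-1}(\o)/K'} \pi(k)v_0,
\]
one obtains an element of $\tau^{G_{n-1}(\o)}$. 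Since $\delta_{P_n}(k) = |\det(k)| = 1$ for $k \in G_{n-1}(\o)$, the normalising twist is trivial on $G_{n-1}(\o)$, and the quotient map $\tau \to \tau^{(1)}$ is equivariant without twist for this subgroup. Using the $G_{n-1}(\o)$-invariance of $w$, each class $[\pi(k)v_0]$ equals $w$, and hence $\Psi^{-}(v) = w$.

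The only step requiring care is the first claim: a naive averaging over $P_n(\o)$ fails because $G_{n-1}(\o) \subset P_n(\o)$ does not stabilise $\tau(U_n,\theta)$, so the average in $\tau$ would not land in a controlled class modulo this subspace. This is precisely why Proposition \ref{phi-spher}, which explicitly constructs a $P_n(\o)$-invariant preimage inside $\Phi^{+}(\tau_{(1)})$, is indispensable; everything else amounts to routine bookkeeping with the adjunctions of Proposition \ref{baseder}.
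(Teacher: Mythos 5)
Your proof is correct and follows essentially the same route as the paper's: the first assertion via the identification $\tau_2=\Phi^{+}(\tau_{(1)})$ from Lemma \ref{filtr} combined with Proposition \ref{phi-spher}, and the second via the exactness of the $G_{n-1}(\o)$-invariants functor, which you simply make explicit through the standard averaging projector. The only cosmetic differences are that the paper invokes the full filtration $0\subset\tau_n\subset\dots\subset\tau_1=\tau$ (though it too only needs the piece $\tau_2$) and cites exactness of $V\mapsto V^{G_{n-1}(\o)}$ rather than writing out the average.
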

\begin{proof}
For the first part, we use the filtrations $0\subset \tau_{n} \subset \dots \subset \tau_{1}=\tau$ of $\tau$, and 
$0\subset \tau_{(1),n-1} \subset \dots \subset \tau_{(1),1}=\tau_{(1)}$ of $\tau_{(1)}$. But $\tau_{i}$ equals 
$\Phi^+(\tau_{(1),i-1})$ because of Lemma \ref{filtr}, so that $\Phi^-$ maps $\tau_{i}^{P_n(\o)}$ onto $\tau_{(1),i-1}^{P_{n-1}(\o)}$ 
surjectively according to Proposition \ref{phi-spher}. In particular, $\Phi^-$ maps $\tau_{2}^{P_n(\o)}$, hence $\tau^{P_n(\o)}$ (as 
$\tau_2^{P_n(\o)}\subset \tau^{P_n(\o)}$), onto $\tau_{(1),1}^{P_{n-1}(\o)}=\tau_{(1)}^{P_{n-1}(\o)}$ surjectively.\\
$\Psi^{-}$ maps $\tau^{G_{n-1}(\o)}$ surjectively onto ${\tau^{(1)}}^{G_{n-1}(\o)}$, because $\Psi^-$ is surjective from 
$\tau$ to $\tau^{(1)}$, and the functor $V\mapsto V^{G_{n-1}(\o)}$ is exact from $Alg(G_{n-1})$ to $Alg(G_0)$ as $G_{n-1}(\o)$ is compact open in $G_{n-1}$ ($\tau$ is a $G_{n-1}$-module by restriction).
\end{proof}

\subsection{Mirabolic restriction for Whittaker functions}

We start by recalling Proposition 1.1 of \cite{CP}, which gives an interpretation of $\Phi^-$ in terms of restriction of Whittaker functions.

\begin{prop}\label{stab}
For $k\geq 2$, and any submodule $\tau$ of $(\rho, C^{\infty}(N_k\backslash P_k,\theta))$ (where $\rho$ denotes the action of $P_k$ by right translation), 
the map $R:W\mapsto \delta_{P_k}^{-1/2}W_{|P_{k-1}}$ is  $P_{k-1}$-equivariant from 
$(\rho, C^{\infty}(N_k\backslash P_k,\theta))$ to $(\rho, C^{\infty}(N_{k-1}\backslash P_{k-1},\theta))$, with kernel 
$\tau (U_k,\theta)$. Hence it inducues a $P_{k-1}$-modules isomorphism between $\Phi^{-}\tau$ and 
$Im(R)\subset C^{\infty}(N_{k-1}\backslash P_{k-1},\theta)$, so that $(\rho, Im(R))$ is a model for $\Phi^{-}\tau$.
\end{prop}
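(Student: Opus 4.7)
The plan is to prove the proposition in four stages: well-definedness of the target, $P_{k-1}$-equivariance, the easy inclusion $\tau(U_k,\theta)\subset\ker(R|_\tau)$, and the deeper reverse inclusion. The first three are direct: well-definedness holds because $N_{k-1}\subset N_k$ with matching character $\theta$ and $\delta_{P_k}$ trivial on $N_{k-1}$; equivariance follows from the short calculation $R(\rho(p_0)W)=\delta_{P_k}^{1/2}(p_0)\rho(p_0)R(W)$ for $p_0\in P_{k-1}$, which together with the $\delta_{P_k}^{-1/2}$-twist in the $P_{k-1}$-action on $\Phi^-\tau$ produces a $P_{k-1}$-equivariant map $\bar R\colon\Phi^-\tau\to C^\infty(N_{k-1}\backslash P_{k-1},\theta)$; and the first inclusion uses the fact from Section~\ref{BZder} that $P_{k-1}$ normalises $\theta|_{U_k}$ to obtain, for $p\in P_{k-1}$ and $u\in U_k$, $W(pu)=\theta(pup^{-1})W(p)=\theta(u)W(p)$, whence $R(\rho(u)W-\theta(u)W)=0$.

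The main obstacle is the reverse inclusion $\ker(R|_\tau)\subset\tau(U_k,\theta)$, i.e.\ the injectivity of $\bar R$. For this I would use the Bernstein--Zelevinsky criterion: $W\in\tau(U_k,\theta)$ if and only if $\int_\Omega\theta(u)^{-1}\rho(u)W\,du=0$ in $\tau$ for some compact open subgroup $\Omega\subset U_k$. The key identity, obtained from left $N_k$-covariance of $W$ and from $G_{k-1}$ normalising $U_k$, is $W(gu)=\theta(gug^{-1})W(g)$ for $g\in G_{k-1}$ and $u\in U_k$. Combining this with the decomposition $P_k=G_{k-1}U_k$, I compute, for $p=gu_0\in P_k$,
\[
\Bigl(\int_\Omega\theta(u)^{-1}\rho(u)W\,du\Bigr)(gu_0)=\theta(gu_0g^{-1})\,W(g)\int_\Omega\chi_g(u)\,du,
\]
where $\chi_g(u):=\theta(gug^{-1})\theta(u)^{-1}$ is a character of $U_k\cong F^{k-1}$, trivial precisely when $g\in P_{k-1}$, and with parameter $\xi_g:=g_{k-1,\cdot}-e_{k-1}^T\in F^{k-1}$ determined by the last row of $g$.

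It remains to choose $\Omega$ so the above vanishes identically. Taking $\Omega=\w^{-M}\o^{k-1}$, standard orthogonality of characters gives $\int_\Omega\chi_g\,du=0$ unless $\xi_g\in\w^M\o^{k-1}$, so the problem reduces to making $W$ vanish on $S_M:=\{g\in G_{k-1}:\xi_g\in\w^M\o^{k-1}\}$. By smoothness, $W$ is right-invariant under a congruence subgroup $K'\subset G_{k-1}$ containing $I_{k-1}+\mathcal{M}(k-1,\p^N)$ for some $N$. For $M\geq N$, any $g\in S_M$ factors as $g=p\cdot k'$ where $k':=I_{k-1}+E\in K'$ with $E$ the matrix whose last row is $\xi_g$ and whose other rows are zero: one verifies that $\det(k')=g_{k-1,k-1}\in\o^*$ and that $g(k')^{-1}$ has last row $e_{k-1}^T$, so $g(k')^{-1}\in P_{k-1}$. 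Since $W|_{P_{k-1}}=0$ and $W$ is right $K'$-invariant, $W$ vanishes on $P_{k-1}K'\supset S_M$, hence the integral vanishes identically on $P_k$. This yields $W\in\tau(U_k,\theta)$ and proves $\bar R$ is injective, completing the proof.
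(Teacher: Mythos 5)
The paper does not reprove this proposition: it explicitly recalls it as Proposition~1.1 of the Cogdell--Piatetski-Shapiro reference and uses it as a black box (as it does with Proposition~\ref{psi-gen}), so there is no in-paper argument to measure your proof against. Judged on its own, your argument is correct. The routine parts (well-definedness, the $\delta_{P_k}^{1/2}$-twisting that makes $\bar R$ equivariant with the normalized $\Phi^-$-action, and the inclusion $\tau(U_k,\theta)\subset\ker R$ via $W(pu)=\theta(pup^{-1})W(p)=\theta(u)W(p)$) are as expected. The nontrivial direction $\ker R\subset\tau(U_k,\theta)$ is handled cleanly: the Jacquet--Bernstein--Zelevinsky averaging criterion, the identity $W(gu_0u)=\theta(gu_0g^{-1})\theta(gug^{-1})W(g)$ with $P_k=G_{k-1}U_k$, the identification of $\chi_g$ with the linear form $x\mapsto\theta(\xi_g x)$ where $\xi_g=g_{k-1,\cdot}-e_{k-1}^T$, and then the key observation that for $\Omega=u(\w^{-M}\o^{k-1})$ the only survivors are $g$ with $\xi_g\in\w^M\o^{k-1}$, which factor as $g=p\,k'$ with $p\in P_{k-1}$ and $k'=I_{k-1}+E$ fixing $W$, so $W(g)=W(p)=0$. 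One cosmetic remark: you should take $M\geq\max(N,1)$ (equivalently, choose $N\geq 1$ from the start, which is harmless), so that the computation $\det(k')=g_{k-1,k-1}\in 1+\p^M\subset\o^*$ is valid; for $M=0$ it need not be. This does not affect the argument since the criterion only asks for one suitable $\Omega$.
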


Notice that for $k\geq 2$, if $g\in G_{k-1}$ equals $zpk$ with $z\in F^*,p\in P_{k-1}$, and $k\in G_{n-1}(\o)$, 
then the absolute value of $z$ depends only on $g$, so we can write it $|z(g)|$. We now state a proposition that follows 
from the proofs of Proposition 1.6. of \cite{CP}, about the interpretation of $\Psi^-$ in terms of Whittaker functions. 
\begin{prop}\label{psi-gen}
Let $\tau$ be a $P_k$-submodule of $C^{\infty}(N_k\backslash P_k,\theta)$, and suppose that $\tau^{(1)}$ is a $G_{k-1}$-module with 
central character $c$. Then, for any $W$ in 
$\tau$, for any $g$ in $G_{k-1}$, the quantity $c^{-1}(z)|z|^{-(k-1)/2}W(diag(zg,1))$ is constant whenever $z$ is in a punctured neighbourhood 
of zero (maybe depending on $g$) in $F^*$.\end{prop}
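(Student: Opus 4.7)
The plan is to exploit the central character $c$ of $\Psi^-\tau$ to produce, for each $\lambda\in F^*$, an explicit element of $\tau(U_k,1)$ built from $W$, and then turn the defining property of $\tau(U_k,1)$ (being killed after averaging over some compact open of $U_k$) into concrete vanishing of $W(diag(\cdot,1))$ near the cell $z(g)=0$. Fix $W\in\tau$ and $g\in G_{k-1}$, and set $F(z) := c(z)^{-1}|z|^{-(k-1)/2}W(diag(zg,1))$. Since $\delta_{P_k}(diag(\lambda I_{k-1},1))=|\lambda|^{k-1}$, the hypothesis that $\Psi^-\tau$ has central character $c$ is precisely that
\[ W_\lambda := |\lambda|^{-(k-1)/2}\rho(diag(\lambda I_{k-1},1))W - c(\lambda)W \in \tau(U_k,1) \]
for every $\lambda\in F^*$. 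Evaluation at $diag(zg,1)$ turns the vanishing of $W_\lambda$ near $z=0$ into the identity $F(\lambda z) = F(z)$ on that range, so the goal is to produce a single neighborhood of $0$ on which enough such relations hold to force constancy of $F$.

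\textbf{Step 2 (vanishing lemma).} By the standard characterization of coinvariants for smooth unipotent actions, $W'\in\tau(U_k,1)$ iff $\int_{U_0}\rho(u)W'\,du = 0$ for some (equivalently, all sufficiently large) compact open $U_0\subset U_k$; in the natural parametrization write $U_0 = u(\p^{N_0})^{k-1}$. Combining the relation $diag(h,1)u(v) = u(hv)diag(h,1)$ with the inclusion $U_k\subset N_k$ (so $W'(u(w)p) = \theta(w_{k-1})W'(p)$), evaluation of the vanishing integral at $diag(h,1)$ yields
\[ 0 = W'(diag(h,1))\int_{U_0}\theta((hv)_{k-1})\,dv. \]
Since $\theta$ has conductor $\o$, the Fourier-type integral on the right equals the positive volume of $U_0$ as soon as every $h_{k-1,j}$ lies in $\p^{-N_0}$; for $h = zg$ this holds once $|z|$ is below a threshold depending on $g$ and $N_0$. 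Hence $W'(diag(zg,1))$ vanishes on a punctured neighborhood of $0$ depending on $W'$ and $g$.

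\textbf{Step 3 (uniformity and conclusion).} Applied to $W_\w$, Step 2 yields $\epsilon_1>0$ with $F(\w z) = F(z)$ on $0 < |z|<\epsilon_1$; iterating (each $\w^m z$ stays in that range) gives $F(\w^n z) = F(z)$ there for every $n\geq 0$. For units $u\in\o^*$, $W_u = \rho(diag(uI_{k-1},1))W - c(u)W$; smoothness of $W$ together with the smoothness of the character $c$ makes $u\mapsto W_u$ locally constant on the compact group $\o^*$, whose image there is therefore a finite subset of $\tau(U_k,1)$. Applying Step 2 simultaneously to this finite family (taking the minimum of the resulting thresholds) yields a single $\epsilon_2>0$ with $F(uz) = F(z)$ for every $u\in\o^*$ and $0 < |z|<\epsilon_2$. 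For any two points $z_1, z_2$ in the punctured neighborhood $0 < |z| < \min(\epsilon_1,\epsilon_2)$, after relabeling so $|z_1|\leq|z_2|$ one writes uniquely $z_1 = \w^n u z_2$ with $n\geq 0$ and $u\in\o^*$, and the two relations combine to give $F(z_1) = F(z_2)$, so $F$ is constant on this punctured neighborhood.

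\textbf{Main obstacle.} The delicate point is obtaining the uniform threshold $\epsilon_2$ over $\o^*$, which relies on compactness of $\o^*$ and the local constancy of $u\mapsto W_u$; no such uniformity is available as $|\lambda|\to 0$ (since $W_\lambda$ blows up by the $|\lambda|^{-(k-1)/2}$ factor), which is precisely why the $\w$-direction is handled separately by iteration rather than by trying to control all $\lambda$ simultaneously.
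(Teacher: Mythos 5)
Your proof is correct. Note that the paper does not actually prove this proposition: it delegates it to Proposition 1.6 of \cite{CP}, adding only the remark that the irreducibility hypothesis there can be relaxed to the existence of a central character for $\tau^{(1)}$. What you have written is a complete, self-contained version of the underlying standard argument, and all the steps check out: the translation of the central-character hypothesis into $W_\lambda\in\tau(U_k,1)$ is right (including $\delta_{P_k}(diag(\lambda I_{k-1},1))=|\lambda|^{k-1}$ and the normalisation of $\Psi^-$ used in the paper); the Jacquet--Langlands characterisation of $\tau(U_k,1)$ combined with the relation $diag(h,1)u(v)=u(hv)diag(h,1)$ and the $(N_k,\theta)$-equivariance correctly yields the vanishing of elements of $\tau(U_k,1)$ along $diag(zg,1)$ for $|z|$ small; and the splitting of $F^*$ into the $\w^{\Z_{\geq 0}}$-direction (handled by iteration within a fixed neighbourhood) and the $\o^*$-direction (handled by local constancy of $u\mapsto W_u$ and compactness, giving a finite family to which the vanishing lemma applies uniformly) is exactly the right way to get a single punctured neighbourhood on which $F$ is constant. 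Your closing remark about why the two directions must be treated differently is also accurate. The only thing your write-up adds beyond what is strictly needed is the explicit Fourier computation in Step 2; one could instead quote Proposition 1.3 of \cite{CP} (restated as Proposition 2.3 of \cite{M}), which the paper itself invokes in the proof of Corollary \ref{psi-gen2}, but doing it by hand as you do is harmless and makes the argument independent of that reference.
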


\begin{rem}Notice that in the proof of the Proposition 1.6. of \cite{CP}, $\tau$ is of a particular form, and $\tau^{(1)}$ is supposed 
to be irreducible. The only fact that is actually needed is that $\tau^{(1)}$ has a central character.\end{rem}

This has the following consequence.
    
\begin{cor}\label{psi-gen2}
For $k\geq 2$, let $\tau$ be a $P_k$-submodule of $(\rho,\mathcal{C}^{\infty}(N_k\backslash P_k,\theta))$, and suppose that $\tau^{(1)}$ is a 
$G_{k-1}$-module with central character $c$, then  $\Psi^-$ identifies with the map 
$$S:W \mapsto [g\mapsto \underset{z\rightarrow 0}{lim}\ c^{-1}(z)|z|^{(1-k)/2}W(diag(zg,1))\delta_{P_k}^{-1/2}(g)]$$ from $\tau$ to 
to $\mathcal{C}^{\infty}(N_{k-1}\backslash G_{k-1},\theta)$. To be more precise, $S$ has kernel $\tau (U_k,1)$, and it induces a $G_{k-1}$-modules 
isomorhism $\overline{S}$ between $\tau^{(1)}$ and $S(\tau)\subset \mathcal{C}^{\infty}(N_{k-1}\backslash G_{k-1},\theta)$.\end{cor}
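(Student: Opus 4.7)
The plan is to show that the map $S$ is well-defined, $G_{k-1}$-equivariant for the normalized action built into $\Psi^-$, and has kernel exactly $\tau(U_k, 1)$; passing to the quotient then yields the announced isomorphism $\overline{S}: \tau^{(1)} \xrightarrow{\sim} S(\tau)$. First I would check that $S(W)(g)$ is well-defined and that $S(W)$ lies in $\mathcal{C}^\infty(N_{k-1}\backslash G_{k-1}, \theta)$. Existence of the limit is immediate from Proposition \ref{psi-gen}: the quantity $c^{-1}(z)|z|^{(1-k)/2}W(diag(zg,1))$ is already constant for $z$ in a punctured neighborhood of $0$, so the limit equals that stable value. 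Left $N_{k-1}$-equivariance against $\theta$ of $S(W)$ follows from the corresponding equivariance of $W$, noting that the embedding $n\mapsto diag(n,1)$ sends $N_{k-1}$ into $N_k$ while preserving $\theta$ (the new $(k-1,k)$-entry is zero); smoothness of $S(W)$ is inherited from that of $W$.

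Next I would verify $G_{k-1}$-equivariance. For $g \in G_{k-1}$ embedded as $diag(g,1) \in P_k$, the identity $diag(zh,1)\,diag(g,1)=diag(zhg,1)$ combined with the multiplicativity of $\delta_{P_k}$ gives
\[
S(\rho(g)W)(h) = \delta_{P_k}^{1/2}(g)\, S(W)(hg).
\]
The normalizing factor $\delta_{P_k}^{-1/2}(g)$ in the definition of $\Psi^-\pi$ absorbs this, so $S$ intertwines the normalized $G_{k-1}$-action on $\tau$ with right translation on $\mathcal{C}^\infty(N_{k-1}\backslash G_{k-1}, \theta)$.

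The main step, and the one I expect to be the main obstacle, is identifying $\ker(S)$ with $\tau(U_k, 1)$. The easy inclusion is a direct computation: for a generator $\rho(u)W' - W'$ of $\tau(U_k, 1)$ with $u=\begin{pmatrix} I_{k-1} & v \\ & 1 \end{pmatrix}$, the identity $diag(zg,1)\,u=\begin{pmatrix} I_{k-1} & zgv \\ & 1 \end{pmatrix}\,diag(zg,1)$ together with left $N_k$-equivariance of $W'$ yields
\[
(\rho(u)W'-W')(diag(zg,1)) = \bigl(\theta(z(gv)_{k-1})-1\bigr)W'(diag(zg,1)),
\]
which vanishes for $|z|$ small since then $z(gv)_{k-1}\in \o$ and $\theta$ is trivial on $\o$. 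The converse inclusion mirrors the argument of Proposition 1.1 of \cite{CP}: if $S(W)=0$, Proposition \ref{psi-gen} forces $W(diag(zg,1))$ to vanish on a punctured neighborhood of $0$ depending on $g$, and then the standard characterization of $V(U_k,1)$ as the space of vectors annihilated by averaging over a sufficiently large compact open subgroup of $U_k$ gives $W\in\tau(U_k,1)$.

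Combining these, $S$ descends to an injective $G_{k-1}$-equivariant map $\overline{S}:\tau^{(1)} \to \mathcal{C}^\infty(N_{k-1}\backslash G_{k-1},\theta)$ with image $S(\tau)$, which is the desired isomorphism of $G_{k-1}$-modules and establishes $(\rho, S(\tau))$ as a model for $\tau^{(1)}=\Psi^-\tau$.
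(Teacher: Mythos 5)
Your proposal follows the same overall structure as the paper's proof: well-definedness of the limit from Proposition \ref{psi-gen}, the easy inclusion $\tau(U_k,1)\subseteq\ker S$ by the $\theta(z(gv)_{k-1})$ computation, the converse inclusion, and the $G_{k-1}$-equivariance via the normalizing factor $\delta_{P_k}^{-1/2}$. All of those steps match.

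The one place where you gloss over the real technical content is the converse inclusion $\ker S \subseteq \tau(U_k,1)$. You observe that $S(W)=0$ forces $W(diag(zg,1))$ to vanish for $|z|$ small, ``in a punctured neighborhood of $0$ depending on $g$,'' and then invoke ``the standard characterization of $V(U_k,1)$ as the space of vectors annihilated by averaging over a sufficiently large compact open subgroup.'' Two issues: first, the averaging argument only closes if the vanishing radius is \emph{uniform} in $g$, i.e.\ depends only on $W$, and you do not explain how to pass from pointwise to uniform vanishing. The paper makes this explicit: it uses the smoothness of $W$ together with the Iwasawa decomposition to upgrade the $g$-dependent vanishing to vanishing of $W(p)$ for all $p$ with $|z(p)|$ small uniformly. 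Second, the result actually invoked by the paper is not the generic Jacquet--Langlands averaging criterion but the sharper Kirillov-model fact (Proposition 2.3 of \cite{M}, a restatement of Proposition 1.3 of \cite{CP}, not 1.1) which characterizes $\tau(U_k,1)$ precisely as those Whittaker functions on $P_k$ that vanish uniformly near $z=0$. Without the uniformity step and the correct citation, the converse inclusion is not yet established.
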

\begin{proof}
For $W$ in $\tau$, call $\overline{W}$ its image in $\tau^{(1)}$, and call $S(W)$ the function 
$$[g\mapsto \underset{z\rightarrow 0}{lim}\ c^{-1}(z)|z|^{(1-k)/2}W(diag(zg,1))\delta_{P_k}^{-1/2}(g)]$$ in $C^{\infty}(N_{k-1}\backslash G_{k-1},\theta)$, 
which is well defined according to Proposition \ref{psi-gen}. If $u(x)=\begin{pmatrix} I_{k-1} & x \\ & 1 \end{pmatrix}$ belongs to 
$U_{k}$, then $\rho(u(x))W(diag(zg,1))= \theta (z (gx)_{k-1})W(diag(zg,1))$. As $\theta (z (gx)_{k-1})=1$ for $z$ small enough, 
we deduce that $S(\rho(u(x))W)=S(W)$, hence the kernel of $S$ contains $\tau (U_k,1)$. Conversely, if $S(W)=0$, the smoothness of 
$W$ and the Iwasawa decomposition imply that $W(g)$ is null for $|z(g)|$ in a punctured neighbourhood of zero depending only on $W$. According to 
Proposition 2.3. of \cite{M} (which is a restatement of Proposition 1.3. of \cite{CP}), 
this means that $W$ belongs to $\tau (U_k,1)$.\\ 
The $\C$-linear map $S:W \mapsto S(W)$ induces a $\C$-linear isomorphism 
$\overline{S}:\overline{W} \mapsto S(W)$ between $\tau^{(1)}$ and its image in $(\rho,\mathcal{C}^{\infty}(N_{k-1}\backslash G_{k-1},\theta))$. 
Moreover, it is a $G_{k-1}$-equivariant because for $g_0\in G_{k-1}$, one has that $\overline{S}(\tau^{(1)}(g_0)\overline{W})$ equals  
$$\overline{S}(\delta_{P_k}^{-1/2}(g_0)\overline{\rho(g_0)W})=\delta_{P_k}^{-1/2}(g_0)S(\rho(g_0)W)=\rho(g_0)S(W)=\rho(g_0)\overline{S}(\overline{W}).$$ 
\end{proof}

We end this section by stating two technical lemmas about Whittaker functions fixed under a maximal compact subgroup, the first 
is inspired from Lemma 9.2 of \cite{JPS2}.  

\begin{LM}\label{goodrestriction}
For $n\geq 2$, let $\tau$ be a $P_n$-submodule of $\mathcal{C}^{\infty}(N_n\backslash P_n,\theta)$, and let
$W$ belong to $\tau^{P_n(\o)}$, then there exists $W'$ in $\tau^{P_n(\o)}$, such that 
$W'(p\beta_{n-1}(z))= W(p)\mathbf{1}_{\o^*}(z)$ for $p$ in $P_{n-1}$ and $z$ in $F^*$.\end{LM}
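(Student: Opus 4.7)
The plan is to construct $W'$ by averaging $W$ against a compactly supported locally constant function on $U_n\simeq F^{n-1}$. Writing $u(v):=\begin{pmatrix}I_{n-1}&v\\0&1\end{pmatrix}$ for $v\in F^{n-1}$, I will set
\begin{equation*}
\phi := \mathbf{1}_{\o^{n-1}} - q^{1-n}\mathbf{1}_{\varpi^{-1}\o^{n-1}} \in \mathcal{C}_c^\infty(F^{n-1}),\quad W'(g) := \int_{F^{n-1}} \phi(v)\, W(g u(v))\, dv.
\end{equation*}
Since $\phi$ has compact support and $v\mapsto \rho(u(v))W$ is locally constant with values in $\tau$, this integral reduces to a finite linear combination of translates of $W$ and lies in $\tau$.

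First I would verify the right $P_n(\o)$-invariance of $W'$. For $k=\begin{pmatrix}A&c\\0&1\end{pmatrix}\in P_n(\o)$ (so $A\in G_{n-1}(\o)$, $c\in\o^{n-1}$), the matrix identity $ku(v)=u(Av+c)\,\mathrm{diag}(A,1)$ together with the right $P_n(\o)$-invariance of $W$ applied to $\mathrm{diag}(A,1)\in P_n(\o)$ yields $W(gku(v))=W(gu(Av+c))$. The unimodular change of variables $w=Av+c$ then shows $W'(gk)=\int\phi(A^{-1}(w-c))W(gu(w))dw$, so the invariance reduces to the fact that both $\o^{n-1}$ and $\varpi^{-1}\o^{n-1}$ are stable under the affine action $w\mapsto Aw+c$ of $G_{n-1}(\o)\ltimes\o^{n-1}$ on $F^{n-1}$, i.e.\ that $\phi$ is affine-invariant.

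Next I would check the restriction formula. Since $p\in P_{n-1}$ has last row $(0,\dots,0,1)$, one has $(pv)_{n-1}=v_{n-1}$, and combining $\mathrm{diag}(zp,1)u(v)=u(zpv)\,\mathrm{diag}(zp,1)$ with the Whittaker property of $W$ and $\theta(u(w))=\theta(w_{n-1})$ gives $W(\mathrm{diag}(zp,1)u(v))=\theta(zv_{n-1})W(\mathrm{diag}(zp,1))$. Therefore
\begin{equation*}
W'(p\beta_{n-1}(z))=W(\mathrm{diag}(zp,1))\cdot \int_{F^{n-1}}\phi(v)\theta(zv_{n-1})\,dv.
\end{equation*}
With the self-dual Haar measure on $F$ (for which $\int_\o\theta(zy)dy=\mathbf{1}_\o(z)$ and $\int_{\varpi^{-1}\o}\theta(zy)dy=q\mathbf{1}_\p(z)$), the integral collapses to $\mathbf{1}_\o(z)-\mathbf{1}_\p(z)=\mathbf{1}_{\o^*}(z)$. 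Since $\beta_{n-1}(z)\in P_n(\o)$ for $z\in\o^*$, the factor $W(\mathrm{diag}(zp,1))$ equals $W(p)$ on $\o^*$, and vanishing on $\p$ comes from the indicator, yielding exactly $W'(p\beta_{n-1}(z))=W(p)\mathbf{1}_{\o^*}(z)$.

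The delicate step is discovering the right $\phi$: it must be simultaneously invariant under the affine $G_{n-1}(\o)\ltimes\o^{n-1}$-action on $F^{n-1}$ (so that $W'$ is $P_n(\o)$-fixed) and have partial Fourier transform $z\mapsto \int\phi(v)\theta(zv_{n-1})dv$ equal to $\mathbf{1}_{\o^*}$ on $\o$. The coefficient $q^{1-n}$ is forced by balancing the volume of $\varpi^{-1}\o^{n-1}$ against $\o^{n-1}$ so that the contributions cancel precisely on $\p$ while surviving on $\o^*$.
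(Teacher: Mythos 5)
Your proof is correct and is essentially the paper's own argument: the paper forms $W^l(p)=\int W(pu(x))\mathbf{1}_{\varpi^l\o^{n-1}}(x)\,dx$ and takes the combination $W^0/\lambda_0^{n-1}-W^{-1}/\lambda_{-1}^{n-1}$, which with the self-dual measure is exactly your single kernel $\phi=\mathbf{1}_{\o^{n-1}}-q^{1-n}\mathbf{1}_{\varpi^{-1}\o^{n-1}}$. The invariance and Fourier computations match the paper's step for step.
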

\begin{proof}
For $l$ in $\Z$, we denote by $\phi_l$ the characteristic function $\mathbf{1}_{\p^l}$ of $\p^l$. We fix a Haar measure $dt$ on $F$. The 
Fourier transform $\widehat{\phi_l}^\theta$ with respect to $\theta$ and $dt$ is equal to $\l_l\phi_{-l}$ for $\l_l=dt(\p^l)>0$. 
We denote by $\Phi_l$ the function $\otimes_{i=1}^{n-1}\phi_l$, which is the characteristic of the lattice $\w^l\o^{n-1}$ in 
$F^{n-1}$. We denote by $u$ the natural isomorphism between $F^{n-1}$ and $U_n$. We also recall 
that any element of $\tau$ is determined by its restriction to $G_{n-1}$.\\
We set $W^l(p)= \int_{x\in F^{n-1}} W(pu(x))\Phi_l(x)dx$ for $p$ in $P_n$ and $dx=dt_1\otimes \dots \otimes dt_{n-1}$, hence $W^l$ belongs to 
$\tau$. Moreover if $k$ belongs to $G_{n-1}(\o)$, and $g$ belongs to $G_{n-1}$, then $W^l(gk)$ 
is equal to $\int_{x\in F^{n-1}} W(gku(x))\Phi_l(x)dx= \int_{x\in F^{n-1}} W(gu(kx))\Phi_l(x)dx$ because $W$ is 
${P_n(\o)}$-invariant, and this last integral is equal to 
$\int_{x\in F^{n-1}} W(gu(x))\Phi_l(k^{-1}x)dx= \int_{x\in F^{n-1}} W(gu(x))\Phi_l(x)dx=W^l(g)$ because of the invariance of $dx$ and 
$\Phi_l$ under $G_{n-1}(\o)$. It is also clear that $W^l$ is invariant $U_n(\o)$ because $W$ is, hence 
$W^l$ belongs to $\tau^{P_n(\o)}$.\\
Now, for $p$ in $P_{n-1}\subset P_n$, and $z$ in $F^*$, we obtain:
$$\begin{aligned} W^l(p\beta_{n-1}(z))& =  \int_{x\in F^{n-1}}  W(p\beta_{n-1}(z)u(x))\Phi_l(x)dx  \\
  & =\int_{x\in F^{n-1}}  W(u(zpx)p\beta_{n-1}(z))\Phi_l(x)dx  \\ 
  & =\int_{x\in F^{n-1}}  \theta((zpx)_{n-1})W(p\beta_{n-1}(z))\Phi_l(x)dx\\
  & =W(p\beta_{n-1}(z))\int_{x\in F^{n-1}} \Phi_l(x)\theta(zx_{n-1})dx  \\
  & = W(p\beta_{n-1}(z))\int_{y\in \w^l\o^{n-2}} dy\int_{t\in F} \phi_l(t)\theta(zt)dt\\
  &= \l_l^{n-2}W(p\beta_{n-1}(z))\widehat{\phi_l}^\theta(z) = \l_l^{n-1}W(p\beta_{n-1}(z))\phi_{-l}(z)\end{aligned}$$
 
\noindent The function $W'=W^0/\l_0^{n-1}-W^{-1}/\l_{-1}^{n-1}$ thus satisfies 
$$W'(p\beta_{n-1}(z))\!=\! W(p\beta_{n-1}(z))(\phi_0-\phi_{1})(z)
\!= \! W(p\beta_{n-1}(z))\mathbf{1}_{\o^*}(z)\!=\! W(p)\mathbf{1}_{\o^*}(z)$$
because $W$ is invariant under $\beta_{n-1}(\o^*)\subset P_n(\o)$. 
\end{proof}

\begin{LM}\label{samegerm}
For $n \geq 2$, let $\tau$ be a $P_n$-submodule of $\mathcal{C}^{\infty}(N_n\backslash P_n,\theta)$, and let
$W$ belong to $\tau^{G_{n-1}(\o)}$, then there exists $W'$ in $\tau^{P_n(\o)}$, such that 
$W'(z_1,\dots,z_{n-1},1)= W(z_1,\dots,z_{n-1},1)1_{\o}(z_{n-1})$ for $z_i$ in $F^*$.\end{LM}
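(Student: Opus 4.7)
The plan is to symmetrise $W$ on the right over the compact group $U_n(\o)$, which will automatically enforce $U_n(\o)$-invariance while the left $\theta$-equivariance of $W$ produces a Fourier-type integral whose support is exactly $\{z_{n-1}\in\o\}$. Concretely, set
$$W'(g)\;=\;\frac{1}{\mathrm{vol}(U_n(\o))}\int_{U_n(\o)} W(gu)\,du.$$
Since $W$ is fixed by some open subgroup of $P_n$, the integrand is locally constant, so the integral reduces to a finite sum and $W'$ lies in $\tau$; by construction $W'$ is right-$U_n(\o)$-invariant.

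Next, I would check that $W'$ is also $G_{n-1}(\o)$-invariant, so that $W'\in\tau^{P_n(\o)}$ because $P_n(\o)=G_{n-1}(\o)U_n(\o)$. For $k\in G_{n-1}(\o)$, conjugation by $diag(k,1)$ sends $u(v)=\begin{pmatrix} I_{n-1} & v\\ 0 & 1\end{pmatrix}$ with $v\in\o^{n-1}$ to $u(kv)$, so $G_{n-1}(\o)$ normalises $U_n(\o)$ and, as $|\det k|=1$, preserves its Haar measure. Using the right $G_{n-1}(\o)$-invariance of $W$ and the change of variable $u\mapsto k^{-1}uk$, a short computation then yields $W'(gk)=W'(g)$.

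The final step is to evaluate $W'(a)$ at $a=(z_1,\dots,z_{n-1},1)$. Let $a'$ be the upper $(n-1)\times(n-1)$ block of $a$; writing $u=u(x)$ with $x\in\o^{n-1}$ gives $au(x)=u(a'x)a$, and since the $(n-1,n-1)$-entry of $a$ equals $z_{n-1}$, the left $\theta$-equivariance of $W$ yields $W(au(x))=\theta(z_{n-1}x_{n-1})W(a)$. Hence
$$W'(a)\;=\;\frac{W(a)}{\mathrm{vol}(\o)^{n-1}}\int_{\o^{n-1}}\theta(z_{n-1}x_{n-1})\,dx\;=\;\frac{W(a)}{\mathrm{vol}(\o)}\int_{\o}\theta(z_{n-1}t)\,dt.$$
Since $\theta$ is trivial on $\o$ but not on $\p^{-1}$, the character $t\mapsto\theta(z_{n-1}t)$ on $\o$ is trivial precisely when $z_{n-1}\in\o$; in that case the integral equals $\mathrm{vol}(\o)$, and vanishes otherwise. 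We conclude $W'(a)=W(a)\1_{\o}(z_{n-1})$ as required. The argument is essentially formal; the only delicate point is tracking the normalisation so that the Fourier transform of $\1_{U_n(\o)}$ produces precisely the characteristic function $\1_{\o}(z_{n-1})$, which is exactly what the choice of $\theta$ (of conductor $\o$) guarantees.
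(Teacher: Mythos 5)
Your proposal is correct and follows essentially the same route as the paper: average $W$ on the right over $U_n(\o)$, use that $G_{n-1}(\o)$ normalises $U_n(\o)$ to keep invariance, and evaluate on the torus via the Fourier transform of $\1_{\o}$, which equals $\1_{\o}$ precisely because $\theta$ has conductor $\o$. The only cosmetic difference is that you normalise by $\mathrm{vol}(U_n(\o))$ while the paper instead fixes the Haar measure so that $dt_i(\o)=1$; both bookkeeping choices give the same identity.
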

\begin{proof} Let $du$ be the Haar measure on $U_n$, corresponding to the Haar measure $dx=dt_1\otimes\dots \otimes dt_{n-1}$ on $F^{n-1}$, 
normalised by $dt_i(\o)=1$ for $i$ between $1$ and $n-1$. Now set $W'(g)=\int_{u\in U_n(\o)} W(gu)du$. The vector $W'$ is a linear combination of 
right translates of $W$ by elements of $U_n(\o)$, so it belongs to $\tau$.
 It is clearly invariant under $U_n(\o)$, and still invariant under 
$G_{n-1}(\o)$, as $G_{n-1}(\o)$ normalises $U_n(\o)$. The following computation then gives the result:
$$W'(z_1,\dots,z_{n-1},1)=\int_{x\in {\o}^{n-1}} W(\beta_1(z_1)\dots \beta_{n-1}(z_{n-1})u(x))dx$$ 
$$=\int_{x\in {\o}^{n-1}}
 W(u(\beta_1(z_1)\dots \beta_{n-1}(z_{n-1})x)\beta_1(z_1)\dots \beta_{n-1}(z_{n-1}))dx$$
$$=\int_{x\in {\o}^{n-1}} \theta (z_{n-1}x_{n-1}) W(\beta_1(z_1)\dots \beta_{n-1}(z_{n-1}))dx$$
$$= \widehat{\1_{\o}}^\theta(z_{n-1}) W(z_1,\dots,z_{n-1},1)=\1_{\o}(z_{n-1})   W(z_1,\dots,z_{n-1},1),$$ 
the last equality because of the normalisation of the Haar measure on $F$.
\end{proof}

\section{Construction of the essential Whittaker function}

We are now going to produce the essential vector of a generic representation 
$\pi$ of $G_n$, which will now be fixed untill the end. We recall recall that we associated to $\pi$, an integer 
$0\leq r\leq n$, and an unramified representation of Langlands' type $\pi_u$ 
of $G_r$ in Section \ref{genunram}.\\

We first notice that the subspace of $\pi^{(n-r)}$ fixed by $G_r(\o)$ is a complex line.

\begin{prop}\label{sousmoduleder}
Let $\pi$ be generic representation of $G_n$. Then $(\pi^{(n-r)})^{G_r(\o)}$ is of dimension $1$. If $v^0$ is a generator of 
$(\pi^{(n-r)})^{G_r(\o)}$, then the submodule $<G_r.v^0>$ of $\pi^{(n-r)}$ spanned by $v^0$ surjects onto $\pi_u$.
\end{prop}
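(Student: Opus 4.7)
The plan is to analyze $\pi^{(n-r)}$ by combining the Leibniz-type formula for Bernstein-Zelevinsky derivatives of a parabolically induced representation with the sphericity analysis of Proposition \ref{parabspherical}. Writing $\pi=\D_1\times\cdots\times\D_t$ as in the paper's ordering (with $\rho_i$ cuspidal of $G_{m_i}$, unramified character of $G_1$ precisely for $i\leq r$), I would iterate Lemma 3.5 of \cite{BZ} together with Proposition 9.6 of \cite{Z} to obtain a $G_r$-stable filtration of $\pi^{(n-r)}$ whose successive quotients are of the form $\D_1^{(a_1)}\times\cdots\times\D_t^{(a_t)}$, indexed by tuples $(a_1,\dots,a_t)$ with $\sum_i a_i=n-r$ and $0\leq a_i\leq m_ik_i$.

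Next I would identify which successive quotients admit a nonzero $G_r(\o)$-fixed vector. By Proposition 9.6 of \cite{Z}, the only nonzero derivatives of $\D_i$ are the sub-segments $\D_i^{(m_ij)}=[\nu^{j-(k_i-1)}\rho_i,\dots,\rho_i]$ for $0\leq j\leq k_i$. By Corollary \ref{segmentramified}, any such sub-segment of length $\geq 2$ is ramified; a length-$1$ piece is the character $\rho_i$ of $G_{m_i}$, which is spherical exactly when $m_i=1$ and $\rho_i\in\widehat{F^*/\o^*}$; the length-$0$ piece is $\1$. Hence $\D_i^{(a_i)}$ is spherical precisely when $a_i=m_ik_i$ or, for $i\leq r$, when $a_i=k_i-1$ (yielding the character $\chi_i$). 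Proposition \ref{parabspherical} now ensures that a successive quotient $\prod_i\D_i^{(a_i)}$ is spherical iff every factor is; combined with the constraint $\sum_i(m_ik_i-a_i)=r$, this forces $a_i=k_i-1$ for $1\leq i\leq r$ and $a_i=m_ik_i$ for $i>r$, so there is a single spherical successive quotient, and it is exactly $\chi_1\times\cdots\times\chi_r=\pi_u$.

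By the exactness of $V\mapsto V^{G_r(\o)}$ on $Alg(G_r)$ applied inductively to the filtration, one concludes $\dim(\pi^{(n-r)})^{G_r(\o)}=\dim\pi_u^{G_r(\o)}=1$, the latter equality coming from Corollary \ref{sphericalsubquotient}; this proves the first assertion. For the second, let $F$ be the smallest step of the filtration containing $v^0$: the same exactness argument forces $F$ to be the step whose quotient is $\pi_u$, and the image of $v^0$ in $\pi_u$ is a nonzero element of $\pi_u^{G_r(\o)}$. Corollary \ref{langlandspherical} then guarantees that this image generates $\pi_u$ as a $G_r$-module, so the natural composite $<G_r.v^0>\hookrightarrow F\twoheadrightarrow\pi_u$ is surjective.

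The delicate point I expect will be the convention for the derivative of a segment: one must confirm that Proposition 9.6 of \cite{Z} removes the ``lowest'' sub-segment, so that $\D_i^{(m_i(k_i-1))}$ is literally $\rho_i$ and not a $\nu$-twist of it; otherwise the unique spherical successive quotient would be a $\nu$-twist of $\pi_u$ rather than $\pi_u$ on the nose, and the clean identification required by the proposition would break.
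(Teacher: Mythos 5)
Your proposal is correct and follows essentially the same route as the paper's proof: the Leibniz filtration from Lemma 3.5 of [BZ] together with Proposition 9.6 of [Z], then Corollary \ref{segmentramified} and Proposition \ref{parabspherical} to rule out sphericity of every subquotient except $\pi_u$, and finally exactness of $V\mapsto V^{G_r(\o)}$. You spell out the surjection onto $\pi_u$ (via Corollary \ref{langlandspherical}) more explicitly than the paper, which only gestures at it, and your caveat about the derivative convention for segments is a legitimate point the paper also treats as immediate from [Z, Prop. 9.6] without further comment.
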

\begin{proof}
Write $\pi=\D_1\times \dots \times \D_t$ for the ordering of the $\D_i$'s fixed after Definition \ref{piu}. 
According to Lemma 3.5. of \cite{BZ} the representation $\pi^{(n-r)}$ 
has a filtration with subquotients $\D_1^{(a_1)}\times \dots \times \D_t^{(a_t)}$, with $\sum_i a_i = n-r$. According 
to Proposition 9.6 of \cite{Z}, $\pi_u$ appears as one of these subquotients, and by the choice of $r$, the other nonzero 
subquotients amongst them all contain either a segment as a factor (in the product notation) of some $G_k$ for $k\geq 2$, 
or a ramified character of $G_1$. According to Proposition \ref{segmentramified}, and Proposition \ref{parabspherical} \ref{parabspherical}, 
these other subquotients contain no nonzero $G_n(\o)$-invariant vector. The result then follows from the exactness of the functor 
$V\mapsto V^{G_n(\o)}$ from $Alg(G_n)$ to $Alg(G_0)$.
\end{proof}

 We also notice the following facts. First, from the theory of Kirillov models 
(see \cite{BZ}, Theorem 4.9), for $n\geq 2$, the map $W\in W(\pi,\theta)\mapsto W_{|P_n}$ is injective,
 we denote by $W(\pi_{(0)},\theta)$ its image. We choose this notation because  
$P_n$-module $\pi_{(0)}=\pi_{|P_n}$ is isomorphic to the submodule $W(\pi_{(0)},\theta)$ of 
$(\rho,\mathcal{C}^{\infty}(N_n\backslash P_n,\theta))$. Now if one applies Proposition \ref{stab} repeatedly to 
$\pi_{(0)}$, then for $r\leq n-1$, the $P_{r+1}$-module $\pi_{(n-r-1)}$ is isomorphic to the submodule of 
$(\rho,\mathcal{C}^{\infty}(N_{r+1}\backslash P_{r+1},\theta))$, whose vectors are the functions 
$(\prod_{k=r+2}^n\delta_{P_k}^{-1/2})W_{|P_{r+1}}$ for $W\in W(\pi,\theta)$ (where $P_{r+1}$ is embedded in $P_n$ via $p\mapsto 
diag(p,I_{n-r-1})$), we denote by 
$W(\pi_{(n-r-1)},\theta)$ this $P_{r+1}$-module.\\ 

Proposition \ref{sousmoduleder} has the following corollary.

\begin{cor}\label{sousmodulegen}
Under the condition $1\leq r \leq n-1$, there exists $\tilde{W_0}$ in $W(\pi_{(n-r-1)},\theta)^{G_r(\o)}$ such that 
$$W_{\pi_u}^0(g)= \underset{z\rightarrow 0}{lim}\ c^{-1}(z)|z|^{-r/2}\tilde{W_0}(diag(zg,1))\delta_{P_{r+1}}^{-1/2}(g)$$ for all $g$ in $G_r$. 
This implies that the representation $\pi_u$ occurs as a submodule of $\pi^{(n-r)}$.\end{cor}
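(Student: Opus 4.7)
The plan is to combine the one-dimensionality of $(\pi^{(n-r)})^{G_r(\o)}$ from Proposition \ref{sousmoduleder} with the limit-formula description of $\Psi^-$ in Corollary \ref{psi-gen2}, and then invoke rigidity of spherical Whittaker functions to identify the resulting function with $W_{\pi_u}^0$. Let $v^0$ generate the one-dimensional space $(\pi^{(n-r)})^{G_r(\o)}$. Since the centre $Z_r$ commutes with $G_r(\o)$ it preserves the line $\C v^0$, and through the surjection $\langle G_r.v^0\rangle\twoheadrightarrow \pi_u$ the resulting eigencharacter must equal $c=c_{\pi_u}$; hence the whole submodule $\langle G_r.v^0\rangle$ has central character $c$. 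Set $\tau=(\Psi^{-})^{-1}(\langle G_r.v^0\rangle)\subset \pi_{(n-r-1)}$, which is a $P_{r+1}$-submodule since the $P_{r+1}$-action on the image of $\Psi^-$ factors through $G_r$; by the surjectivity of $\Psi^-$ one has $\tau^{(1)}=\langle G_r.v^0\rangle$, in particular with central character $c$. The second half of Proposition \ref{spher1} applied to $\tau$ then shows that $\Psi^-$ sends $\tau^{G_r(\o)}$ onto $\langle G_r.v^0\rangle^{G_r(\o)}=\C v^0$, so I pick any preimage $\tilde{W_0}\in \tau^{G_r(\o)}\subset \pi_{(n-r-1)}^{G_r(\o)}$ of $v^0$.

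The hypotheses of Corollary \ref{psi-gen2} are now in place for $\tau$ with central character $c$, so the limit on the right-hand side of the stated formula converges and equals $\overline{S}(v^0)(g)$, where $\overline{S}:\langle G_r.v^0\rangle\hookrightarrow \mathcal{C}^{\infty}(N_r\backslash G_r,\theta)$ is the $G_r$-equivariant embedding produced there. The function $\overline{S}(v^0)$ is $G_r(\o)$-invariant, and the spherical Hecke algebra $\mathcal{H}(G_r,G_r(\o))$ acts on it by the Satake character of $\pi_u$, because the Hecke action on the line $\C v^0$ is determined through the Hecke-equivariant surjection onto $\pi_u$, which sends $v^0$ to a non-zero multiple of $W_{\pi_u}^0$. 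Shintani's explicit formula then forces $\overline{S}(v^0)=\beta W_{\pi_u}^0$ for some scalar $\beta$, non-zero because $\overline{S}$ is injective; rescaling $v^0$ (and hence $\tilde{W_0}$) by $1/\beta$ gives the stated formula. Once this normalisation is in place, $\overline{S}$ is a $G_r$-equivariant injection of $\langle G_r.v^0\rangle$ into $\mathcal{C}^{\infty}(N_r\backslash G_r,\theta)$ sending the cyclic generator $v^0$ to the cyclic generator $W_{\pi_u}^0$ of $W(\pi_u,\theta)\cong \pi_u$ (by Corollary \ref{langlandspherical} and Proposition \ref{langlandswhittaker}), so the image of $\overline{S}$ is precisely $W(\pi_u,\theta)$; thus $\langle G_r.v^0\rangle\cong \pi_u$, realising $\pi_u$ as a submodule of $\pi^{(n-r)}$.

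The step I expect to be most delicate is the rigidity argument pinning $\overline{S}(v^0)$ to a scalar multiple of $W_{\pi_u}^0$: it requires uniqueness, up to a scalar, of $G_r(\o)$-invariant Hecke-eigen Whittaker functions in the full ambient space $\mathcal{C}^{\infty}(N_r\backslash G_r,\theta)$ with prescribed Satake parameter, and not merely inside a previously fixed Whittaker model. Every other step is a formal consequence of the derivative machinery developed in Section \ref{restriction} and the structural results recalled in Section \ref{rappelgen}.
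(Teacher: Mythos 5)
Your proof is correct and takes essentially the same route as the paper's: lift $v^0$ to a $G_r(\o)$-invariant $\tilde W_0$ via Proposition \ref{spher1}, apply the limit formula of Corollary \ref{psi-gen2} to a $P_{r+1}$-submodule whose first derivative is $\langle G_r.v^0\rangle$ (with central character $c$), and identify the resulting $G_r(\o)$-invariant Hecke-eigen Whittaker function with $W_{\pi_u}^0$ by Shintani's uniqueness, whence $\langle G_r.v^0\rangle\simeq\pi_u$. The only cosmetic difference is that you take $\tau=(\Psi^-)^{-1}(\langle G_r.v^0\rangle)$ whereas the paper uses the cyclic submodule $\langle P_{r+1}.\tilde W_0\rangle$; both satisfy the hypotheses needed.
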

\begin{proof} 
We take $\Psi^-(W(\pi_{(n-r-1)},\theta))$ as a model for $\pi^{(n-r)}$, i.e. 
$\pi^{(n-r)}=\Psi^-(W(\pi_{(n-r-1)},\theta))$. Let $\tilde{W}_0$ be a preimage (which we shall normalise later), of 
$v^0$ under $\Psi^-$, which we take in $W(\pi_{(n-r-1)},\theta)^{G_r(\o)}$ thanks to Proposition \ref{spher1}. We denote by 
$<P_{r+1}.\tilde{W}_0>$ the $P_{r+1}$-submodule of $W(\pi_{(n-r-1)},\theta)$ spanned by $\tilde{W}_0$. By definition of $\Psi^-$, we have 
$$\Psi^-(<P_{r+1}.\tilde{W}_0>)=\Psi^-(<G_r.\tilde{W}_0>)=<G_r.v^0>.$$ Now, $Z_n$ acts by a character $c$ on $<G_r.v^0>$ 
(which is the central character of $\pi_u$ 
as well according to Proposition \ref{sousmoduleder}). 
Let $S$ be the map defined in Corollary \ref{psi-gen2} from 
$<P_{r+1}.\tilde{W}_0>$ to $\mathcal{C}^\infty(N_r\backslash G_r,\theta)$. We know from this corollary, 
that $S$ factors to give an isomorphism $\overline{S}$ between $<G_r.v^0>$ and $S(<P_{r+1}.\tilde{W}_0>)$. 
Define $W^0$ as $W^0=S(\tilde{W}_0)$ in $\mathcal{C}^\infty(N_r\backslash G_r,\theta)$, so that $S(<P_{r+1}.\tilde{W}_0>)$ is equal to $G_r.W^0$. 
As the $G_r$-module $<G_r.W^0>$ is isomorphic to $<G_r.v^0>$, there is a surjective 
$G_r$-module morphism from $<G_r.W^0>$ onto $W(\pi_u,\theta)$ according to Proposition \ref{sousmoduleder}. It sends $W^0$ to a nonzero multiple of 
$W_{\pi_u}^0$. We normalise $\tilde{W}_0$, such that the Whittaker 
function $W^0=S(\tilde{W}_0)$ is equal to $1$ on $G_n(\o)$. The Hecke algebra $\mathcal{H}_r$ thus multiplies 
$W^0$ and $W_{\pi_u}^0$ by the same character, as $W_{\pi_u}^0$ is the image of $W^0$ via a $G_r$-intertwining operator. Both are 
normalised spherical Whittaker functions, they are thus equal according to \cite{S}. In particular, we have 
$S(\tilde{W}_0)=W_{\pi_u}^0$, which is the first statement of the corollary. Next, this implies the equalities 
$<G_r.W^0>=<G_r.W_{\pi_u}^0>=W(\pi_u,\theta)$, so the surjection from $<G_r.v^0>$ onto $W(\pi_u,\theta)$ is actually 
equal to the isomorphism $\overline{S}$, hence $\pi_u$ occurs as a submodule of $\pi^{(n-r)}$. 
\end{proof}

The following proposition then holds.

\begin{prop}
Under the condition $1\leq r \leq n-1$, there exists in $W(\pi_{(n-r-1)},\theta)^{P_{r+1}(\o)}$ an element $W_0$, such that 
$W_0(z_1,\dots,z_r,1)=\delta_{P_{r+1}}^{1/2}(z_1,\dots,z_r)W_{\pi_u}^0(z_1,\dots, z_r)1_{\o}(z_r)$ for $z_r$ in $F^*$.
\end{prop}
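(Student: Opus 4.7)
The plan is to start from the vector $\tilde{W}_0\in W(\pi_{(n-r-1)},\theta)^{G_r(\o)}$ produced in Corollary \ref{sousmodulegen} and apply Lemma \ref{samegerm} with $n=r+1$. The lemma yields a $W_0\in W(\pi_{(n-r-1)},\theta)^{P_{r+1}(\o)}$ with
$$W_0(z_1,\dots,z_r,1)=\tilde{W}_0(z_1,\dots,z_r,1)\1_{\o}(z_r),$$
so the task reduces to establishing the pointwise identity $\tilde{W}_0(z_1,\dots,z_r,1)=\delta_{P_{r+1}}^{1/2}(z_1,\dots,z_r)W_{\pi_u}^0(z_1,\dots,z_r)$ for $z_r\in\o$.

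To check this identity, I would specialise the limit formula of Corollary \ref{sousmodulegen} at $g=(z_1,\dots,z_r)\in A_r$ (so that $\mathrm{diag}(zg,1)=(z_1,\dots,z_{r-1},zz_r,1)$) and perform the change of variable $w=zz_r$. Combining the central character relation $W_{\pi_u}^0(z_1,\dots,z_{r-1},zz_r)=c(z)W_{\pi_u}^0(z_1,\dots,z_r)$ with the simple identity $\delta_{P_{r+1}}^{1/2}(z_1,\dots,z_r)=|z_r|^{r/2}\delta_{P_{r+1}}^{1/2}(z_1,\dots,z_{r-1},1)$ (both sides equal $|z_1|^{1/2}|z_2|\cdots|z_r|^{r/2}$), the limit identity rewrites as the desired equality for $w$ in some punctured neighbourhood of zero, whose size a priori depends on $z_1,\dots,z_{r-1}$. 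To extend it to all $z_r\in\o$, I would exploit the central element $\beta_r(\w)=\mathrm{diag}(\w I_r,1)$ of $G_r$, whose powers commute with $G_r(\o)$, so that the normalised translate
$$\tilde{W}_0^{(N)}:=c(\w^N)^{-1}|\w^N|^{-r/2}\,\pi(\beta_r(\w)^N)\tilde{W}_0$$
is again in $W(\pi_{(n-r-1)},\theta)^{G_r(\o)}$, and the translation shifts the valuation of the last coordinate of the test point by $N$, so that for $N$ large enough the asymptotic identity now holds on the whole of $\{z_r\in\o\}$.

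The main obstacle is the uniformity of $N$ in the parameters $z_1,\dots,z_{r-1}$, since Proposition \ref{psi-gen} only produces a punctured neighbourhood depending on the test point. I would resolve this by using admissibility of $\pi$: the invariant subspace $W(\pi_{(n-r-1)},\theta)^{P_{r+1}(\o)}$ is finite-dimensional, which forces the sequence $\tilde{W}_0^{(N)}$ (after projecting through Lemma \ref{samegerm}) to stabilise for $N$ sufficiently large. Applying Lemma \ref{samegerm} once more to this stabilised vector, using the $\1_{\o}(z_r)$-support it enforces, then produces the $W_0$ asserted in the proposition.
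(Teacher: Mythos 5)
Your overall strategy coincides with the paper's: extract the pointwise identity $\tilde W_0(z_1,\dots,z_{r-1},w,1)=\delta_{P_{r+1}}^{1/2}(z_1,\dots,z_{r-1},w)\,W_{\pi_u}^0(z_1,\dots,z_{r-1},w)$ for $|w|$ small from the limit formula of Corollary \ref{sousmodulegen}, enlarge the region of validity to $|w|\le 1$ by translating by a central element $\beta_r(b)$ and renormalising by $c_{\pi_u}(b)|b|^{r/2}$, and then cut off by $\1_{\o}(z_r)$ via Lemma \ref{samegerm}. You also correctly isolate the one genuine difficulty: Proposition \ref{psi-gen} only gives a punctured neighbourhood of $0$ depending on the test point, and $(z_1,\dots,z_{r-1})$ ranges over a non-compact set.

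Your resolution of that difficulty, however, does not work. The space $W(\pi_{(n-r-1)},\theta)^{P_{r+1}(\o)}$ is not finite-dimensional in general: already for $n=2$ and $\pi$ the Steinberg representation (so $r=1$), the $P_2(\o)$-invariants of the Kirillov model contain every $\o^*$-invariant function in $C_c^{\infty}(F^*)$ supported in $\o$, an infinite-dimensional space. Admissibility of $\pi$ gives nothing here because $P_{r+1}(\o)$ is not an open subgroup of $G_n$. Moreover, even in a finite-dimensional space a sequence need not stabilise, so the inference from finite-dimensionality to stabilisation of $\tilde W_0^{(N)}$ is not a proof in any case. What is actually needed, and what the paper invokes, is the uniform asymptotic statement from the claim in the proof of Theorem 2.1 of \cite{M} (equivalently, the arguments of the claim in Proposition 1.6 of \cite{CP}): there is a single $N$, depending only on $\tilde W_0$, such that $\tilde W_0(z_1,\dots,z_ra,1)=c_{\pi_u}(a)|a|^{r/2}\tilde W_0(z_1,\dots,z_r,1)$ for all $z_1,\dots,z_{r-1}$, all $|z_r|\le q^{-N}$ and all $|a|\le 1$. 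With that uniformity, a single translate $\tilde W_{0,b}$ with $|b|=q^{-N}$ already satisfies the desired identity on all of $\{|z_r|\le 1\}$, and one application of Lemma \ref{samegerm} finishes the proof. Without this input (or a substitute proof of it, e.g.\ via the filtration $\tau_k$ and the uniform vanishing near $z=0$ of elements of $\tau(U_{r+1},1)$), your argument has a gap at exactly the point you flagged.
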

\begin{proof}
Let $\tilde{W}_0$ be as in Corollary \ref{sousmodulegen}. By the claim in the proof of Theorem 2.1 of \cite{M} (the arguments of 
the claim in Proposition 1.6. of \cite{CP} are actually sufficient here), 
there is $N$ in $\N$, such that $\tilde{W}_0(z_1,\dots,z_r a,1)=c_{\pi_u}(a)|a|^{r/2} \tilde{W}_0(z_1,\dots,z_r,1)$ 
(parametrizing $A_{r+1}$ with the $\beta_i$'s) for $|z_r|\leq q^{-N}$ and $|a|\leq 1$. For $b$ in $F^*$, call $\tilde{W}_{0,b}$ the function 
$p\mapsto \tilde{W}_0(p\beta_{r}(b))/(c_{\pi_u}(b)|b|^{r/2}) $ defined on $P_{r+1}$, then $\tilde{W}_{0,b}$ still belongs to 
$W(\pi_{(n-r-1)},\theta)^{G_r(\o)}$, and $\tilde{W}_{0,b}(z_1,\dots,z_r,1)/(c_{\pi_u}(z_r)|z_r|^{r/2})$ is constant 
with respect to $z_r$ whenever $|z_r|\leq q^{-N}/|b|$. We choose $b$ in $F^*$ satisfying $|b|=q^{-N}$, so that the function 
$\tilde{W}_{0,b}(z_1,\dots,z_r,1)/(c_{\pi_u}(z_r)|z_r|^{r/2})$ is constant with respect to $z_r$ for $|z_r|\leq 1$. Hence, 
according to Corollary \ref{sousmodulegen}, for $|z_r|\leq 1$, we have the equalities 
$$\tilde{W}_{0,b}(z_1,\dots,z_r,1)/(c_{\pi_u}(z_r)|z_r|^{r/2})=\tilde{W}_{0}(z_1,\dots,z_rb,1)/(c_{\pi_u}(z_rb)|z_rb|^{r/2})$$
$$=\delta_{P_{r+1}}^{1/2}(z_1,\dots,z_{r-1},b)W_{\pi_u}^0(z_1,\dots,z_{r-1},b)/(c_{\pi_u}(b)|b|^{r/2})$$
$$=\delta_{P_{r+1}}^{1/2}(z_1,\dots,z_{r-1},1)W_{\pi_u}^0(z_1,\dots,z_{r-1},1).$$ They imply the equality 
$$\tilde{W}_{0,b}(z_1,\dots,z_r,1)=\delta_{P_{r+1}}^{1/2}(z_1,\dots,z_{r-1},z_r)W_{\pi_u}^0(z_1,\dots,z_{r-1},z_r)$$ for $|z_r|\leq 1$. 
On the other hand, applying Lemma \ref{samegerm}, 
there is $W_0$ is in $W(\pi_{(n-r-1)},\theta)^{P_{r+1}(\o)}$, such that $W_0(z_1,\dots,z_r,1)$ is equal to 
$\tilde{W}_{0,b}(z_1,\dots,z_r,1)1_{\o}(z_r)$, it is then clear that $W_0$ has the desired property.  
\end{proof}

We now prove the main result of this paper.

\begin{thm}\label{essential}
For $n\geq 2$, let $\pi$ be a ramified generic representation of $G_n$ (i.e. $r\leq n-1$). Then one can produce a $G_{n-1}(\o)$-invariant function 
$W_{\pi}^{ess}$ in $W(\pi,\theta)$, whose restriction 
to $A_{n-1}$ (when $A_{n-1}$ is parametrised by its simple roots), is given by formula 

$$W_{\pi}^{ess}(z_1,\dots,z_{n-1},1)$$
\begin{equation}\label{formule2}= W_{\pi_u}^0(z_1,\dots,z_r)\nu(z_1,\dots,z_r)^{(n-r)/2}\mathbf{1}_{\o}(z_r)
\prod_{j=r+1}^{n-1}\1_{\o^*}(z_j)\end{equation} when $r\geq 1$, and by 
\begin{equation}\label{formule2'} W_{\pi}^{ess}(z_1,\dots,z_{n-1},1)= \prod_{j=1}^{n-1}\1_{\o^*}(z_j)\end{equation} when $r=0$.
 A function $W_{\pi}^{ess}$ with such properties is unique, and has image $W_{\pi_u}^0$ in $\pi^{(n-r)}$.

\end{thm}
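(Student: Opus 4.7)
The plan is to lift the already-constructed function $W_0 \in W(\pi_{(n-r-1)},\theta)^{P_{r+1}(\o)}$ step-by-step up the derivative filtration, ending with an element of $W(\pi_{(0)},\theta)^{P_n(\o)}$, which by Kirillov theory corresponds to a unique Whittaker function in $W(\pi,\theta)^{G_{n-1}(\o)}$. Concretely, I build inductively elements
\[
W_j \in W(\pi_{(n-r-1-j)},\theta)^{P_{r+1+j}(\o)},\qquad j=0,1,\dots,n-r-1,
\]
starting from $W_0$, with the property that on the diagonal torus
\[
W_j(z_1,\dots,z_{r+j},1) = W_0(z_1,\dots,z_r,1)\prod_{i=r+1}^{r+j}\mathbf{1}_{\o^*}(z_i),
\]
up to the normalizing $\delta^{1/2}$-factors coming from Proposition \ref{stab}. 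At each step the lift $\tilde W_{j+1}$ of $W_j$ is obtained by invoking the surjectivity in Proposition \ref{spher1} ($\Phi^-$ surjects on $P(\o)$-invariants), and the additional $\mathbf{1}_{\o^*}(z_{r+j+1})$ factor is produced by applying Lemma \ref{goodrestriction} (in the ambient mirabolic $P_{r+2+j}$) to $\tilde W_{j+1}$. The case $r=0$ is handled in the same way, but starting from a trivial base case in $W(\pi_{(n-1)},\theta)^{P_1(\o)}$ obtained by unwinding the isomorphism $\pi^{(n)} \simeq \mathbf{1}$ together with Corollary \ref{psi-gen2}; the inductive lifting then proceeds identically.

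Once $W_{n-r-1}$ has been produced, Kirillov theory (injectivity of $W\mapsto W|_{P_n}$ on $W(\pi,\theta)$) promotes it to a unique $W_{\pi}^{ess}\in W(\pi,\theta)$. Since $G_{n-1}(\o)\subset P_n(\o)$, the $P_n(\o)$-invariance passes to $G_{n-1}(\o)$-invariance of $W_\pi^{ess}$. Reading off the formula on $A_{n-1}$ is then a direct computation: one unwinds the chain of identifications from Proposition \ref{stab} at each step, multiplying by the appropriate $\delta_{P_k}^{1/2}$, and verifies that these factors telescope to $\nu(z_1,\dots,z_r)^{(n-r)/2}$ (using $\delta_{P_k}(z_1,\dots,z_{k-1},1)=|z_1\cdots z_{k-1}|$ and the simple-root parametrisation).

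Uniqueness follows from three observations put together: first, $W_\pi^{ess}$ restricted to $A_{n-1}$ determines it on $G_{n-1}$ by $G_{n-1}(\o)$-invariance plus the Iwasawa decomposition and the Whittaker transformation law under $N_{n-1}$; second, the restriction to $G_{n-1}$ determines it on $P_n$ via the $U_n$-equivariance with respect to $\theta$ ($P_n = G_{n-1}\ltimes U_n$); third, the restriction to $P_n$ determines it on $G_n$ by Kirillov. The last assertion, that the image of $W_\pi^{ess}$ in $\pi^{(n-r)}$ is $W_{\pi_u}^0$, follows by running the construction backwards: the image of $W_{\pi}^{ess}$ in $\pi_{(n-r-1)}$ is $W_0$ (by the lifting) and hence the image in $\pi^{(n-r)}=\Psi^-(\pi_{(n-r-1)})$ is $\Psi^-(W_0)$. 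By Corollary \ref{psi-gen2}, this only depends on the germ of $W_0$ near $z_r=0$, which by construction coincides with that of $\tilde W_{0,b}$, and by Corollary \ref{sousmodulegen} this germ produces precisely $W_{\pi_u}^0$.

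The main technical nuisance is bookkeeping the normalisations: each lifting changes the governing mirabolic and introduces a $\delta_{P_k}^{1/2}$, while Lemma \ref{goodrestriction} produces $\mathbf{1}_{\o^*}$-factors only along one extra torus coordinate at a time. Verifying that the telescoping collapses to exactly $\nu(z_1,\dots,z_r)^{(n-r)/2}$ and leaves no stray modulus factor, both in the $r\geq 1$ and $r=0$ cases, is the delicate computational step; everything else follows from already-established abstract machinery.
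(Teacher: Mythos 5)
Your proposal is correct and follows essentially the same route as the paper: start from $W_0$ built in the preceding proposition, lift one step at a time by combining Proposition~\ref{spher1} with Lemma~\ref{goodrestriction}, telescope the $\delta_{P_i}^{1/2}$ factors to $\nu(z_1,\dots,z_r)^{(n-r)/2}$, and invoke Kirillov theory for uniqueness and to pass from $P_n(\o)$-invariance on the restriction to $G_{n-1}(\o)$-invariance of the resulting Whittaker function. The only cosmetic difference is that for $r=0$ the paper simply takes $W_0$ to be the constant function $1$ on the trivial group $P_1$ rather than unwinding $\pi^{(n)}\simeq\mathbf{1}$, and the paper dispatches the image-in-$\pi^{(n-r)}$ claim with a terse ``by construction'' where you spell out the germ argument, but this is consistent with the paper's reasoning.
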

\begin{proof} Suppose first that we have $r\geq 1$. We already constructed in the previous proposition a vector $W_0$ in 
$W(\pi_{(n-r-1)},\theta)^{P_{r+1}(\o)}$ such that 
$$W_0(z_1,\dots,z_r,1)=\delta_{P_{r+1}}^{1/2}(z_1,\dots,z_r)W_{\pi_u}^0(z_1,\dots, z_r)1_{\o}(z_r).$$ 
Then, applying Proposition \ref{spher1} and then Lemma \ref{goodrestriction}, we obtain $W_1$ in 
$W(\pi_{(n-r-2)},\theta)^{P_{r+2}(\o)}$, that satisfies 
$$\begin{array}{l}W_1(z_1,\dots,z_{r+1},1)=\delta_{P_{r+2}}^{1/2}(z_1,\dots,z_{r+1})W_0(z_1,\dots,z_r,1)\1_{\o^*}(z_{r+1})\\
= \delta_{P_{r+2}}^{1/2}(z_1,\dots,z_{r},1)\delta_{P_{r+1}}^{1/2}(z_1,\dots,z_{r})W_{\pi_u}^0(z_1,\dots, z_r)1_{\o}(z_r)\1_{\o^*}(z_{r+1}).\end{array}$$
Repeating this last step (Proposition \ref{spher1} and then Lemma \ref{goodrestriction}), we obtain by induction for all $k$ between 
$1$ and $n-r-1$, an element $W_k$ in $W(\pi_{(n-r-1-k)},\theta)^{P_{r+k+1}(\o)}$, that satisfies 
$$W_k(z_1,\dots,z_{r+k},1)=\delta_{P_{r+k+1}}^{1/2}(z_1,\dots,z_{r+k})W_{k-1}(z_1,\dots,z_{r+k-1},1)\1_{\o^*}(z_{r+k})$$ 
$$=W_{\pi_u}^0(z_1,\dots,z_r)\mathbf{1}_{\o}(z_r)\prod_{j=r+1}^{r+k}\1_{\o^*}(z_j)
\prod_{i=r+1}^{r+k+1}\delta_{P_i}^{1/2}(z_1,\dots,z_r,\underbrace{1,\dots,1}_{i-(r+1)\times}).$$
We define $W_{\pi}^{ess}$ to be the element of 
$W(\pi,\theta)$ which restricts to $P_n$ as $W_{n-r-1}$, it is thus $G_{n-1}(\o)$-invariant and satisfies Equation (\ref{formule2}) of the statement 
of the theorem, because 
$$\prod_{i=r+1}^{n}\delta_{P_i}^{1/2}(z_1,\dots,z_r,\underbrace{1,\dots,1}_{i-(r+1)\times})=|det(z_1,\dots,z_r)|^\frac{n-r}{2}$$
as $\delta_{P_i}(z_1,\dots,z_r, 1,\dots,1)=|det(z_1,\dots,z_r)|$ for $i>r$.\\
If $r=0$, we take for $W_0$ the constant function on the trivial group $P_1$ equal to $1$ in 
$W(\pi_{(n-1)},\theta)=W(\pi_{(n-1)},\theta)^{P_1(\o)}$. Again, thanks to 
Proposition \ref{spher1} and Lemma \ref{goodrestriction}, there is $W_1$ in $W(\pi_{(n-2)},\theta)^{P_2(\o)}$ 
such that $W_1(z_1,1)=\1_{\o^*}(z_1)$ for $z_1$ in $F^*$, and we end as in the case $r\geq 1$.\\  
 The function $W_{\pi}^{ess}$ is unique by the theory of 
Kirillov models, and its image in $\pi^{(n-r)}$ is $W_{\pi_u}^0$ by 
construction.
\end{proof}

The expression of the restriction of $W_\pi^{ess}$ to $A_{n-1}$ in the usual coordinates is the same.

\begin{cor}\label{corformule}
 Let $\pi$ be a ramified generic representation of $G_n$, then if $a=diag(a_1,\dots,a_{n-1})$ belongs to $A_{n-1}$, and 
$a'=diag(a_1,\dots,a_r)\in A_r$, we obtain Formulas (\ref{formule}) and (\ref{formule'}) of the introduction:
$$W_{\pi}^{ess}(diag(a,1))=W_{\pi_u}^0(a')\nu(a')^{(n-r)/2}\mathbf{1}_{\o}(a_r)
\prod_{j=r+1}^{n-1}\1_{\o^*}(a_j)$$ when $r\geq 1$, and $$W_{\pi}^{ess}(diag(a,1))=\prod_{j=1}^{n-1}\1_{\o^*}(a_j)$$ when $r=0$.

\end{cor}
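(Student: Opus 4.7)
This corollary is a pure change of coordinates on the torus $A_{n-1}$: Theorem~\ref{essential} gives $W_\pi^{ess}$ in the simple-root parametrization $(z_1,\dots,z_{n-1},1)=\beta_1(z_1)\cdots\beta_{n-1}(z_{n-1})$, whereas the corollary re-expresses the same value in standard diagonal coordinates $a=diag(a_1,\dots,a_{n-1})$. The dictionary between the two is $a_i=z_iz_{i+1}\cdots z_{n-1}$ for $1\le i\le n-1$, or equivalently $z_i=a_i/a_{i+1}$ under the convention $a_n=1$.

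First I would match the indicator functions. A descending induction on $i$ starting from $a_{n-1}=z_{n-1}$ and using $a_i=z_ia_{i+1}$ shows that for $r<i<n$, the condition $a_i\in\o^*$ is equivalent to $z_i\in\o^*$; in particular, on this common support one has $a_{r+1}\in\o^*$, so $z_r=a_r/a_{r+1}\in\o$ is equivalent to $a_r\in\o$. Hence the factor $\1_\o(z_r)\prod_{r<j<n}\1_{\o^*}(z_j)$ of Formula~(\ref{formule2}) coincides, viewed as a function of $(a_1,\dots,a_{n-1})$, with the corresponding factor in Formula~(\ref{formule}), and the same argument at $r=0$ identifies the supports in Formulas~(\ref{formule2'}) and~(\ref{formule'}).

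For $r\ge 1$ it then remains to compare the spherical factor on this common support. Using $z_iz_{i+1}\cdots z_r=a_i/a_{r+1}$ for $i\le r$, the element $(z_1,\dots,z_r)\in A_r$ equals $a_{r+1}^{-1}a'$, where $a_{r+1}^{-1}$ is identified with the scalar matrix $a_{r+1}^{-1}I_r\in Z_r$. Since $\pi_u$ is unramified, its central character $c_{\pi_u}$ is trivial on $\o^*$, hence $W_{\pi_u}^0(a_{r+1}^{-1}a')=W_{\pi_u}^0(a')$ whenever $a_{r+1}\in\o^*$; analogously $\nu(a_{r+1}^{-1}I_r\cdot a')^{(n-r)/2}=|a_{r+1}|^{-r(n-r)/2}\nu(a')^{(n-r)/2}=\nu(a')^{(n-r)/2}$. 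Combining these identities with the indicator match from the previous paragraph yields Formula~(\ref{formule}), while Formula~(\ref{formule'}) follows from the $r=0$ case of the support identification, the spherical factor being trivial there. No serious obstacle arises: once the coordinate change is made explicit, everything reduces to the observation that both $c_{\pi_u}$ and $\nu$ are invariant on $\o^*$, so the spherical and determinant factors see only the class of $a_{r+1}$ modulo $\o^*$.
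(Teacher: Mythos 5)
Your proof is correct and follows essentially the same route as the paper: convert between the simple-root coordinates $z_i=a_i/a_{i+1}$ and the standard ones, observe that $(z_1,\dots,z_r)=a_{r+1}^{-1}a'$ with $a_{r+1}=\prod_{i=r+1}^{n-1}z_i$ a unit on the support of the indicator factors, and use the $\o^*$-invariance of $W_{\pi_u}^0$, $\nu$, $\1_\o$ and $\1_{\o^*}$ to absorb the discrepancy. The only cosmetic difference is that you justify $W_{\pi_u}^0(a_{r+1}^{-1}a')=W_{\pi_u}^0(a')$ via triviality of the central character on $\o^*$ rather than via right $G_r(\o)$-invariance of the spherical function, which is equally valid.
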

\begin{proof}
We do the case $r\geq 1$, the case $r=0$ being simpler. 
If $diag(a_1,\dots,a_{n-1})=(z_1,\dots,z_{n-1})$ belongs to $A_{n-1}$, we have $a_i=z_i\dots z_{n-1}$, hence 
if $a'=diag(a_1,\dots,a_r)$, we have $(z_1,\dots,z_r)=(\prod_{i=r+1}^{n-1} z_i)^{-1}a'$ in $A_r$. 
Equation (\ref{formule2}) can thus be read: 
$$W_{\pi}^{ess} (diag(a,1))$$ 
$$=W_{\pi_u}^0((\prod_{i=r+1}^{n-1} z_i)^{-1}a')\nu((\prod_{i=r+1}^{n-1} z_i)^{-1}a')^{(n-1)/2}
\mathbf{1}_{\o}(z_r)
\prod_{i=r+1}^{n-1}\1_{\o^*}(z_i)$$
$$=W_{\pi_u}^0(a')\nu(a')^{(n-r)/2}\mathbf{1}_{\o}(a_r)
\prod_{i=r+1}^{n-1}\1_{\o^*}(a_i),$$ the last equality because if it is not $0=0$, this means that $z_{r+1},\dots,z_{n-1}$ 
all belong to $\o^*$, hence the inverse of their product as well, and $W_{\pi_u}^0$, $\nu$, $\1_{\o^*}$ and $\1_{\o}$ are 
all invariant under $\o^*$.
\end{proof}

We then have the following corollary.

\begin{cor}\label{testf}
Let $\pi$ be a generic representation of $G_n$ with Whittaker model $W(\pi,\theta)$. There exists 
in $W(\pi,\theta)$ a unique $G_{n-1}(\o)$-invariant function $W_{\pi}^{ess}$ equal to $1$ on $G_{n-1}(\o)$, such that for every $ 1 \leq m \leq n-1$, and every unramified
representation $\pi'$ of Langlands' type of $G_m$, with normalised spherical function $W_{\pi'}^0$ in $W(\pi',\theta^{-1})$, the equality 
$I(W_{\pi}^{ess},W_{\pi'}^0,s)= L(\pi,\pi',s)$ holds for an appropriate normalisation 
of the invariant measure on $N_{m}\backslash G_{m}$.\\
\end{cor}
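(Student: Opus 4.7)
The plan is to produce $W_\pi^{ess}$ via Theorem \ref{essential} and then verify the integral identity by a direct unfolding, comparing to the Jacquet--Shalika test-vector formulas (\ref{test1})--(\ref{test2}) and the factorisation (\ref{test3}). If $\pi$ is unramified then $r=n$, and we simply take $W_\pi^{ess}=W_\pi^0$; the required identity and normalisation are then immediate from (\ref{test2}). If $\pi$ is ramified, i.e.\ $r\leq n-1$, we take for $W_\pi^{ess}$ the function produced by Theorem \ref{essential}. Its value at $I_n$ equals $W_{\pi_u}^0(I_r)=1$ (and trivially $1$ when $r=0$), so its $G_{n-1}(\o)$-invariance yields $W_\pi^{ess}\equiv 1$ on $G_{n-1}(\o)$.

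For the integral identity in the ramified case, I would unfold $I(W_\pi^{ess},W_{\pi'}^0,s)$ using the Iwasawa decomposition $G_m=N_mA_mG_m(\o)$. Since $W_\pi^{ess}$ is invariant under the embedded copy $diag(G_m(\o),I_{n-m})\subset G_{n-1}(\o)$ and $W_{\pi'}^0$ is $G_m(\o)$-invariant on the right, a suitable normalisation of measures reduces the integral to
$$\int_{A_m}W_\pi^{ess}(diag(a,I_{n-m}))W_{\pi'}^0(a)\delta_{B_m}(a)^{-1}\nu(a)^{s-(n-m)/2}d^*a.$$
I then plug in the explicit expression of Corollary \ref{corformule}, applied with entries $a_{m+1}=\dots=a_{n-1}=1$, and split according to the relation between $r$ and $m$.

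When $r\geq m$, the indicators in (\ref{formule}) are automatically satisfied, and combining the $\nu$-exponents produces precisely the integrand of (\ref{test2}) (or (\ref{test1}) if $r=m$) applied to the pair $(\pi_u,\pi')$ on $G_r\times G_m$; this equals $L(\pi_u,\pi',s)=L(\pi,\pi',s)$ by (\ref{test3}). When $r<m$, the indicators $\prod_{j=r+1}^{m}\mathbf{1}_{\o^*}(a_j)$ force $a_{r+1},\dots,a_m$ into $\o^*$; writing $a=diag(a',b)$ with $a'\in A_r$ and $b\in A_{m-r}(\o^*)$, the right $G_m(\o)$-invariance of $W_{\pi'}^0$ lets me replace $W_{\pi'}^0(a)$ by $W_{\pi'}^0(diag(a',I_{m-r}))$, the identity $\delta_{B_m}(a)=\delta_{B_r}(a')\nu(a')^{m-r}$ on this support collapses the modulus character, and the integration over $A_{m-r}(\o^*)$ contributes $1$. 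The remaining integral over $A_r$ matches (\ref{test2}) applied to $(\pi',\pi_u)$ on $G_m\times G_r$, again giving $L(\pi_u,\pi',s)=L(\pi,\pi',s)$. The factor $\mathbf{1}_{\o}(a_r)$ from Corollary \ref{corformule} is then redundant, since the Casselman--Shalika dominance condition on the support of $W_{\pi'}^0$ already forces $|a_r|\leq 1$. The case $r=0$ is easier: (\ref{formule'}) localises the integrand on $A_m(\o^*)$, on which $W_{\pi'}^0$, $\delta_{B_m}$ and $\nu$ are all trivial, so with the normalised measure $I=1=L(\pi,\pi',s)$.

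For uniqueness, the difference of two candidates would be a $G_{n-1}(\o)$-invariant element of $W(\pi,\theta)$ vanishing at $I_n$ and having zero Rankin--Selberg pairing against the spherical Whittaker function of every unramified representation of Langlands' type of each $G_m$, $1\leq m\leq n-1$; by the Kirillov-theoretic non-degeneracy argument recalled in the introduction (Jacquet's fix of JPS), this forces it to vanish. The main obstacle is not conceptual but a book-keeping exercise: one must carefully track the modulus characters, the $\nu$-exponents, and the support conditions produced by the indicators of Corollary \ref{corformule}, especially in the case $r<m$. Once the block decomposition $a=diag(a',b)$ is set up, the identity matches the JPS test-vector formulas term-by-term.
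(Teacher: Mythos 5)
Your proposal follows essentially the same route as the paper: construct $W_\pi^{ess}$ from Theorem~\ref{essential} (or use $W_\pi^0$ when $r=n$), unfold the Rankin--Selberg integral via Iwasawa to an integral over $A_m$, substitute the explicit formula of Corollary~\ref{corformule}, and compare to the Jacquet--Shalika identities (\ref{test1}), (\ref{test2}) together with (\ref{test3}); uniqueness is referred to \cite{JPS}. The paper merely separates $m=r$ from $m<r$ where you group them as $r\geq m$, but the computation is the same.

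One small slip in your write-up: for $r=m$ it is not true that ``the indicators in (\ref{formule}) are automatically satisfied.'' The factor $\mathbf{1}_{\o}(a_r)=\mathbf{1}_{\o}(a_m)$ is a genuine constraint on the integration variable, and it is precisely what supplies the Schwartz function $\mathbf{1}_{\o^m}$ required to match (\ref{test1}) (whereas the product $\prod_{r<i<n}\1_{\o^*}(a_i)$ is indeed trivially $1$ once $a_{m+1}=\dots=a_{n-1}=1$). You do cite (\ref{test1}) in this case, so the conclusion is right, but the phrase ``automatically satisfied'' should be reserved for $r>m$; when $r=m$ the indicator survives and is used. The uniqueness remark could also be stated more simply: it is exactly the uniqueness statement of \cite{JPS}, not a separate ``Kirillov non-degeneracy'' argument.
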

\begin{proof} 
The unicity of a function $W_{\pi}^{ess}$ with such properties follows from \cite{JPS}. If $\pi$ is unramified (i.e. $r=n$), we set $W_{\pi}^{ess}=W_{\pi}^0$ and Equations (\ref{test1}) and (\ref{test2}) show that it is the correct choice.\\
 When $r\leq n-1$, we again only treat the case $r\geq 1$, the case $r=0$ being similar,
 but simpler (using Equation (\ref{formule'}) instead of Equation (\ref{formule})). We show that the function $W_\pi^{ess}$
  from the previous corollary satisfies the wanted equalities.\\ 
Thanks to Iwasawa decomposition, we have $$I(W_{\pi}^{ess}, W_{\pi'}^0,s)=\int_{A_{m}} W_{\pi}^{ess}(diag(a,I_{n-m}))W_{\pi'}^0(a) 
\delta_{B_{m}}^{-1}(a)\nu(a)^{s-\frac{(n-m)}{2}}d^*a'.$$  
If $m>r$, using Equations (\ref{formule}) and $\delta_{B_{m}}\begin{pmatrix}a & \\ &I_{m-r}\end{pmatrix}= \delta_{B_{m}}(a)\nu(a)^{m-r}$,
 we find
$$I(W_{\pi}^{ess}, W_{\pi'}^0,s)=\! \int_{A_{r}}\!\!\! W_{\pi_u}^{0}(a')
W_{\pi'}^0\begin{pmatrix} a' & \\ & I_{m-r} \end{pmatrix}\delta_{B_{r}}^{-1}(a')
\mathbf{1}_{\o}(a_r)\nu(a')^{s-\frac{(m-r)}{2}}d^*a'$$ 
$$=\int_{A_{r}} W_{\pi_u}^{0}(a')
W_{\pi'}^0\begin{pmatrix} a' & \\ & I_{m-r} \end{pmatrix}\delta_{B_{r}}^{-1}(a')\nu(a')^{s-\frac{(m-r)}{2}}d^*a'=I(W_{\pi'}^0,W_{\pi_u}^{0},s),$$ 
as $W_{\pi'}^0\begin{pmatrix} a' & \\ & I_{m-r} \end{pmatrix}$ vanishes for $|a_r|>1$. Hence, by Equations (\ref{test2}) and (\ref{test3}), we obtain  
$$I(W_{\pi}^{ess}, W_{\pi'}^0,s)=L(\pi_u,\pi',s)=L(\pi,\pi',s).$$
If $m=r$, using Equation (\ref{formule}), we find 
$$I(W_{\pi}^{ess}, W_{\pi'}^0,s) =\int_{A_{r}} W_{\pi_u}^{0}(a') W_{\pi'}^0\begin{pmatrix} a'\end{pmatrix}\delta_{B_{r}}^{-1}(a')
\mathbf{1}_{\o}(a_r)\nu(a')^{s}d^*a',$$ but this integral is equal to  
$$I(W_{\pi_u}^0, W_{\pi'}^0,\mathbf{1}_{\o^m}, s)=L(\pi_u,\pi',s)=L(\pi,\pi',s)$$ by Equations (\ref{test1}) and (\ref{test3}).\\
If $m<r$, Equation (\ref{formule}) gives $$I(W_{\pi}^{ess}, W_{\pi'}^0,s)=\int_{A_{m}} W_{\pi_u}^{0}(diag(a,I_{r-m}))W_{\pi'}^0(a) \delta_{B_{m}}^{-1}(a)\nu(a)^{s-\frac{(r-m)}{2}}d^*a,$$ 
and this integral is equal to  
$$I(W_{\pi_u}^0, W_{\pi'}^0)=L(\pi_u,\pi',s)= L(\pi,\pi',s)$$ by Equations (\ref{test2}) and (\ref{test3}).\\
In all cases, we have $$I(W_{\pi}^{ess}, W_{\pi'}^0,s)=L(\pi,\pi',s).$$ 
\end{proof}

\begin{ack} I thank the $p$-adic workgroup in Poitiers for allowing me to give talks about \cite{JPS}. I thank Guy Henniart, Herv\'e Jacquet, and Michitaka Miyauchi, for useful comments and corrections about this note. Finally and most importantly, I thank the referee for his very careful reading of the paper, which allowed to correct some mistakes, and improve many arguments.\end{ack}

\Addresses
\end{document}